\def\MR#1{}
\def\@tocline#1#2#3#4#5#6#7{\relax
  \ifnum #1>\c@tocdepth % then omit
  \else
    \par \addpenalty\@secpenalty\addvspace{#2}%
    \begingroup \hyphenpenalty\@M
    \@ifempty{#4}{%
      \@tempdima\csname r@tocindent\number#1\endcsname\relax
    }{%
      \@tempdima#4\relax
    }%
    \parindent\z@ \leftskip#3\relax \advance\leftskip\@tempdima\relax
    \rightskip\@pnumwidth plus4em \parfillskip-\@pnumwidth
    #5\leavevmode\hskip-\@tempdima
      \ifcase #1
       \or\or \hskip 1em \or \hskip 2em \else \hskip 3em \fi%
      #6\nobreak\relax
    \hfill\hbox to\@pnumwidth{\@tocpagenum{#7}}\par% <---- \dotfill -> \hfill
    \nobreak
    \endgroup
  \fi}
\theoremstyle{definition}
\newtheorem{thm}{Theorem}[section]
\newtheorem{prop}[thm]{Proposition}
\newtheorem{cor}[thm]{Corollary}
\newtheorem{lemma}[thm]{Lemma}
\newtheorem{defn}[thm]{Definition}
\newtheorem{rmk}[thm]{Remark}
\newtheorem*{crd1}{Coordinates I}
\newtheorem*{crd2}{Coordinates II}
\newtheorem*{crd3}{Coordinates III}
\newtheorem*{crd4}{Coordinates IV}
\newcommand{\Sym}{\mathrm{Sym}}
\newcommand{\OO}{\mathcal{O}}
\newcommand{\Bl}{\mathrm{Bl}}
\newcommand{\GL}{\text{GL}}
\newcommand{\codim}{\mathrm{codim}}
\newcommand{\xdashrightarrow}[2][]{\ext@arrow 0359\rightarrowfill@@{#1}{#2}}
\newcommand{\xdashleftarrow}[2][]{\ext@arrow 3095\leftarrowfill@@{#1}{#2}}
\newcommand{\xdashleftrightarrow}[2][]{\ext@arrow 3359\leftrightarrowfill@@{#1}{#2}}
\def\rightarrowfill@@{\arrowfill@@\relax\relbar\rightarrow}
\def\leftarrowfill@@{\arrowfill@@\leftarrow\relbar\relax}
\def\leftrightarrowfill@@{\arrowfill@@\leftarrow\relbar\rightarrow}
\def\arrowfill@@#1#2#3#4{%
  $\m@th\thickmuskip0mu\medmuskip\thickmuskip\thinmuskip\thickmuskip
   \relax#4#1
   \xleaders\hbox{$#4#2$}\hfill
   #3$%
}
\newcommand{\sqp}[1][]{%
\ifthenelse{\isempty{#1}}{\mathcal{P}(\square_2)}{\mathcal{P}^{#1}(\square_2)}%
}
\newcommand{\PP}{{\mathbb{P}}}
\DeclareMathOperator\PGL{PGL}
\DeclareMathOperator\Hu{Hu}
\DeclareMathOperator\Gr{Gr}
\newcommand{\rleft}{\mathopen{}\mathclose\bgroup\left}
\newcommand{\rright}{\aftergroup\egroup\right}
\def\coloneqq{\mathrel{\mathop:}=}
\DeclareSymbolFontAlphabet{\mathbb}{AMSb}
\DeclareSymbolFontAlphabet{\mathbbl}{bbold}
\title[\resizebox{4.5in}{!}{The enumerative geometry of cubic hypersurfaces: point and line conditions}] {The enumerative geometry of cubic hypersurfaces: point and line conditions}
\author[M.\,Belotti]{Mara Belotti}
\address[M.\,Belotti]{Technische Universit\"at Berlin, Chair of Discrete Mathematics/Geometry, Stra{\ss}e des 17. Juni 136, 10623 Berlin, Germany}
\email{belotti@math.tu-berlin.de}
\author[A.\,Danelon]{Alessandro Danelon}
\address[A.\,Danelon]{Eindhoven University of Technology, Department of Mathematics and Computer Science, Groene Loper, MetaForum Building, Eindhoven, The Netherlands}
\email{a.danelon@tue.nl}
\author[C.\,Fevola]{Claudia Fevola}
\address[C.\,Fevola]{Max Planck Institute for Mathematics in the Sciences\\ Inselstraße 22\\ 04103 Leipzig\\Germany}
\email{claudia.fevola@mis.mpg.de}
\author[A.\,Kretschmer]{Andreas Kretschmer}
\address[A.\,Kretschmer]{Fakult\"at f\"ur Mathematik\\Institut f\"ur Algebra und Geometrie\\Otto-von-Guericke-Universit\"at Magdeburg\\Universit\"atsplatz 2\\ 39106 Magdeburg\\Germany}
\curraddr{}
\email{andreas.kretschmer@ovgu.de}
\thanks{}
\pgfplotsset{compat=1.17}
\begin{document}
\nocite{*}

\begin{abstract}
    In order to count the number of smooth cubic hypersurfaces tangent to a prescribed number of lines and passing through a given number of points, we construct a compactification of their moduli space. We term the latter a $1$--\textit{complete variety of cubic hypersurfaces} in analogy to the space of complete quadrics. Paolo Aluffi explored the case of plane cubic curves. Starting from his work, we construct such a space in arbitrary dimension by a sequence of five blow-ups. The counting problem is then reduced to the computation of five Chern classes, climbing the sequence of blow-ups. Computing the last of these is difficult due to the fact that the vector bundle is not given explicitly. Identifying a restriction of this vector bundle, we arrive at the desired numbers in the case of cubic surfaces.
\end{abstract}

\let\thefootnote\relax
\footnotetext{\hspace*{-14pt}
\begin{minipage}{353pt} MB has been funded by the Deutsche Forschungsgemeinschaft (DFG, German Research Foundation) under Germany's Excellence Strategy – The Berlin Mathematics Research Center MATH$^+$ (EXC-2046/1, project ID 390685689).
AD is supported by Jan Draisma's Vici grant 639.033.514 from
the NWO, Netherlands Organisation for scientific research,
\begin{minipage}{.019\textwidth}
\includegraphics[width=\textwidth]{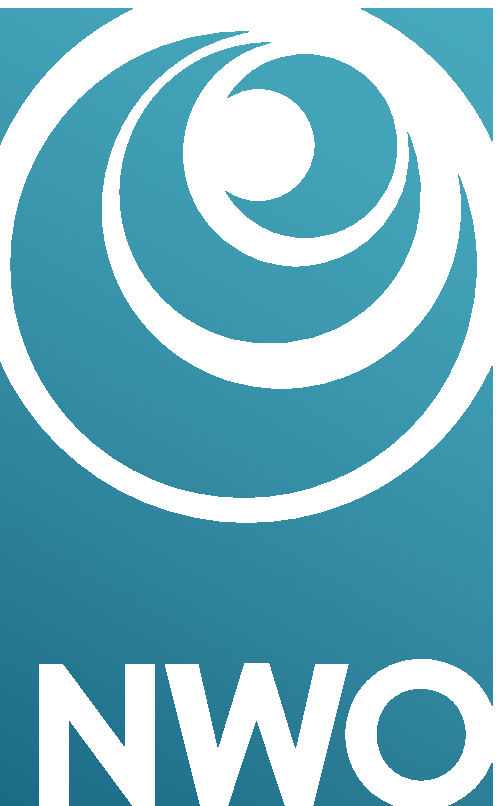}
\end{minipage}. AK is supported by the Deutsche Forschungsgemeinschaft (DFG, German Research Foundation) -- 314838170, GRK~2297 MathCoRe.
\end{minipage}}

\maketitle{}

\tableofcontents

\section*{Introduction}\label{introduction}
A famous moduli space in enumerative geometry is the \textit{space of complete quadrics}. This is a compactification of the space of smooth quadric hypersurfaces in $\PP(W)=\PP^n$, where $W$ is an $(n+1)$-dimensional vector space over an algebraically closed field $\mathbbl{k}$.
To construct this space, one starts with $V_0=\PP(\Sym^2(W))$ and considers the sequence of $n$ blow-ups obtained by iteratively blowing up the proper transforms of the loci of matrices with rank at most $i$.
For more details, we refer to \cite{manivel2020complete} and the references therein.
This variety has been used to answer the degree $2$ case of questions like:
\begin{center}
\textit{How many smooth degree $d$ hypersurfaces in $\PP^n$ are tangent to $\binom{n+d}{n}-1$ general linear spaces of various dimensions?}
\end{center}
The solutions to these problems are classically called \textit{characteristic numbers}.
In the case of quadrics, this question was first answered by Schubert \cite{schubert1879kalkul}, back in $1879$. What might sound like a rather basic question was later translated into a problem about the Chow ring of the space of complete quadrics, where beautiful results were achieved \cite{de1985complete,semple1948complete,vainsencher1982schubert}. More recently, the space of complete quadrics has proved useful to study some classical problems in algebraic statistics related to maximum likelihood estimation \cite{manivel2020complete,michalek2021maximum}. For quadrics, we know the characteristic numbers and also a space where to translate the question above into a cohomological problem.

Much less is known when it comes to higher degree hypersurfaces. To our knowledge the only case where all characteristic numbers are known are plane cubics and plane quartics. For the latter partial results were achieved in \cite{aluffi1991some,van1991characteristic}, and later a full description was given in \cite{vakil1999characteristic}. When it comes to numbers for cubic plane curves we have to go back to around $150$ years ago, when in the early 1870's Maillard 
\cite{maillard1871recherches} and Zeuthen \cite{zeuthen1873almindelige}
claimed to have computed them. Unfortunately, their methods were relying on assumptions that were not rigorously justified. 
It took more than a century to prove these numbers by using rigorous theoretical foundations provided by Fulton-MacPherson intersection theory, as in the works of Kleiman and Speiser \cite{kleiman1984enumerative}, and Aluffi \cite{Aluffi:1990,aluffi1991enumerative}. 
A particularly interesting feature of \cite{Aluffi:1990} is that in order to compute characteristic numbers, the author constructs a \textit{space of complete plane cubics}, which turns out to be the right compactification of the moduli space of plane cubics where to answer the above enumerative question. 
In a similar fashion as for complete quadrics, the space of complete plane cubics is constructed through a sequence of blow-ups.

As far as we know, the case of higher dimensional cubic hypersurfaces has been unexplored.
Our aim in this paper is to generalize the space of complete plane cubics in \cite{Aluffi:1990} and to construct what we call a space of $1$--\textit{complete cubic hypersurfaces}, which is the right space where to answer the following enumerative geometry question:
\begin{center}
\textit{What is the number of smooth cubic hypersurfaces in $\PP^n$ passing through $n_p$ general points and tangent to $\binom{n+3}{n}-n_p-1$ general lines?}
\end{center}
The paper is based on \cite{Aluffi:1990}, whose construction we find out to be generalizable to our specific setting.
We now focus on our construction and on the reason for a 1-complete variety of cubic hypersurfaces to be the right space where to answer the above question.

The moduli space of cubic hypersurfaces in $\PP(W)$ is naturally the projective space $\PP(\Sym^3(W))$ of dimension $\binom{n+3}{3}-1$.
We call \textit{line condition} the hypersurface in $\PP(\Sym^3(W))$ of cubics tangent to a given line in $\PP(W)$, and \textit{point condition} the hyperplane of cubics which contain a given point in $\PP(W)$.
We want to count the finite number of smooth cubics in the intersection of $n_p$ general point conditions and $\binom{n+3}{n}-n_p-1$ general line conditions. 
However, the intersection of such hypersurfaces might not be generically transverse.
A central role is indeed played  by the locus where all line conditions intersect, and this turns out to be the set of non-reduced cubics of the form $\lambda \mu^2$ for linear forms $\lambda$ and $\mu$.
This description of the \textit{base locus} is indeed the reason of our focus on \textit{lines} rather than more general linear spaces.
The goal of our construction is to obtain a variety birational to $\PP(\Sym^3(W))$ but such that in this new space, the proper transforms of the line conditions do no longer intersect.  We call such a variety a $1$-complete variety of cubic hypersurfaces. It turns out that as in \cite{Aluffi:1990}, it is enough to blow-up five times along irreducible components of the loci where the proper transforms of the line conditions intersect.
Our ultimate goal is then to compute the correction term that is needed to be subtracted from the bound provided by Bézout's theorem. This number can be expressed in terms of certain Chern classes related to the behavior of the cubic hypersurfaces in the blow-up process.

This paper is organized as follows. In Section~\ref{sec:1} we give the definition of a $1$-complete variety of cubic hypersurfaces $\tilde{V}$ and Theorem~\ref{thm:counting} proves that the intersection numbers we will compute in this variety coincide with the characteristic numbers we were aiming for.

Section~\ref{sec:2} concerns the construction of the $1$-complete variety $\tilde{V}$ achieved by performing five blow-ups. In each subsection we spell out the details of each blow-up by expressing its equations, the support of the intersection of the proper transforms of the line conditions, and the equations for this intersection.
This intersection is then taken to be the center of the next blow-up.
The construction ends with Corollary~\ref{cor:empty} where we show that the proper transforms of the line conditions no longer intersect.

Section~\ref{sec:3} is devoted to the Chow rings of the five centers defined in the previous section and to the computation of the intersection classes needed for the correction term. 

In the final Section~\ref{sec:4} we gather the data computed so far and provide the characteristic numbers for cubic surfaces in projective $3$-space. The proof of Theorem~\ref{thm:B4}\eqref{item:Chern_classes_E} and Remark~\ref{rmk:goeswrong} explain what is missing to determine the characteristic numbers with respect to line conditions in higher dimensions.

The code used in this work together with computational results is available~at
\[
	\texttt{\href{https://mathrepo.mis.mpg.de/CountingCubicHypersurfaces}{https://mathrepo.mis.mpg.de/CountingCubicHypersurfaces}}.
\]
\textbf{Acknowledgments.}
The authors wish to thank Mateusz Micha\l{}ek for presenting the problem to us and for the precious help offered throughout the way. Special thanks go to Paolo Aluffi for his hints and the time he dedicated to answer our long emails about his thesis. We also thank Tim Seynnaeve and Fulvio Gesmundo for valuable discussions. Finally, we are grateful to the organizers of the online workshop \href{https://sites.google.com/view/react-2021}{REACT}, which gave us the opportunity to meet and start this project.

\section{First associated hypersurfaces and the Hurwitz map}\label{sec:1}
    We fix an integer $d\geq 2$, an algebraically closed field $\mathbbl{k}$ of characteristic $0$ or strictly greater than $d$, and a $\mathbbl{k}$-vector space $W$ of dimension $n+1$ with $n \geq 2$.
	We refer to \cite[Section~3.2.E]{Gelfand1994Discriminants} for the notion of higher associated hypersurfaces of a projective variety.
	Specifically, we are interested in the following case: Let $X \coloneqq \mathcal{V}(f) \subseteq \PP(W)$ be an irreducible projective hypersurface of degree $d\geq 2$, defined by an irreducible homogeneous polynomial $f \in \mathbbl{k}[x_0, \ldots, x_n]_d$. If $X$ is smooth, its \emph{first associated hypersurface} $\mathcal{Z}_1(X) \subseteq \Gr(2,W)$ consists of all lines $\ell \subseteq \PP(W)$ such that $\ell$ is tangent to $X$ at some point or, more precisely, $\dim(\ell\cap E T_xX)=1$ for some point $x\in\ell\cap X$, where $E T_xX$ is the embedded tangent space of $X$ at a point $x$. If instead $X$ is singular, we first consider the lines $\ell$ for which there exists a smooth point satisfying the above conditions and then take the Zariski closure of this set in the Grassmannian $\Gr(2,W)$.

	In \cite[Proposition~2.11]{Gelfand1994Discriminants} it is shown that $\mathcal{Z}_1(X)$ is an irreducible hypersurface in $\Gr(2,W)$. Moreover, if $X$ is smooth, \cite[Theorem~1.1]{Sturmfels2017Hurwitz} shows that $\mathcal{Z}_1(X)$ is defined by an irreducible element $\Hu_f$ of degree $d(d-1)$ in the projective coordinate ring of $\Gr(2,W)$, called the \emph{Hurwitz form}, written as a degree $d(d-1)$ homogeneous polynomial in the Plücker coordinates, uniquely only up to the degree $d(d-1)$ piece of the ideal generated by the Plücker relations.
	On the open set of $\PP(\Sym^d(W))$ parametrizing smooth degree $d$ hypersurfaces, we can define an injective morphism sending $X$ to the degree $d(d-1)$ hypersurface $\mathcal{Z}_1(X)$ of $\Gr(2,W)$. The set of these hypersurfaces in $\Gr(2,W)$ is parametrized by the projective space $\PP(|\mathcal{O}_{\Gr(2,W)}(d(d-1))|)$.
	%COMMENT ABOUT DIMENSION
	%\begin{rmk}
	%We refer to  (\cite[ 15.4]{fulton2013representation}) for the following part.
	%The homogeneous elements of degree $d(d-1)$ in the coordinate ring of $\Gr(2,W)$ together with 0 form the irreducible representation $\mathbb{S}_\lambda(W)$ where $\lambda$ is the partition $[d(d-1),d(d-1)]$ and $\mathbb{S}_\lambda$ is the Schur functor associated to it.
	%The dimension of the latter is known to equal the number of semi-standard Young tableaux of size $2\times d(d-1)$ with $\dim(W)$ many distinct entries.
	%\end{rmk}
	We define the \textit{Hurwitz map} to be the rational map:
    \begin{equation*}
        \begin{tikzcd}
	        \Hu: \PP(\Sym^d(W)) \arrow[r, dashed] & \PP(|\mathcal{O}_{\Gr(2,W)}(d(d-1))|),\quad
	        {[f]} \arrow[r, mapsto] & {[\Hu_f]}.
        \end{tikzcd}
    \end{equation*}
	For instance, if $n=2$, this map is simply the one taking a degree $d$ plane curve into its dual curve. Following \cite[Example~2.2]{Sturmfels2017Hurwitz}, $\Hu_f$ can be computed 
	as the resultant of the homogeneous polynomials of degree $d-1$ in the variables $s$ and $t$ given by the two partial derivatives of $f(s v_0 + t w_0, \ldots, s v_n + t w_n)$.
	As a polynomial in $s$ and $t$, the coefficients of the latter are bihomogeneous of degree $(1,d)$ in the coefficients of $f$ and the variables $v_i, w_i$, respectively.
	It follows that the polynomial $\Hu_f$ is bihomogeneous of degree $(2(d-1), 2d(d-1))$ with respect to the aforementioned variables.
	By \cite[Example~2.2]{Sturmfels2017Hurwitz}, $\Hu_f$ can even be expressed as a polynomial in the Pl\"ucker coordinates $p_{0,1},p_{0,2},\dots,p_{n,n+1}$ of the Grassmannian $\Gr(2,W)$ given by the $2 \times 2$ minors $p_{ij} = v_i w_j - v_j w_i$. Hence, $\Hu_f$ is bihomogeneous of degree $(2(d-1),d(d-1))$ in the coefficients of $f$ and the Pl\"ucker coordinates, respectively.
	Notice that the polynomial obtained in this way makes sense also for non-smooth, reducible and even non-reduced hypersurfaces $\mathcal{V}(f)$.
	
	\begin{rmk}
	    The rational map induced by the linear system generated by all line conditions in $H^0(\PP(\Sym^d(W)),\mathcal{O}(2(d-1)))$ is closely related to $\Hu$. Composing the former with a suitable linear embedding into $\PP(|\mathcal{O}_{\Gr(2,W)}(d(d-1))|)$ gives the latter.
	\end{rmk}
	
	In the same line of \cite{Aluffi:1990}, we define the \textit{point condition} $P^p$ and the \textit{line condition} $L^{\ell}$ as the hypersurfaces in $\PP(\Sym^d(W))$ consisting  of the degree $d$ hypersurfaces, respectively, containing the point $p$, and tangent to the line $\ell$.
	
    \begin{lemma}
        The indeterminacy locus of the rational Hurwitz map $\Hu$ is precisely the intersection of all line conditions, which in turn set-theoretically agrees with the subset $S_0 \subseteq \PP(\Sym^d(W))$ of the hypersurfaces defined by degree $d$ homogeneous polynomials divisible by the square of some non-constant polynomial.
    \end{lemma}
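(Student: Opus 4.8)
The plan is to prove the two asserted equalities, establishing first the geometric one, $\bigcap_{\ell}L^{\ell}=S_0$ set-theoretically, and then the identification of the indeterminacy locus of $\Hu$ with $\bigcap_{\ell}L^{\ell}$, since the latter step will use that $S_0$ has codimension at least two. Everything rests on a membership criterion for the line conditions: fix a line $\ell$ spanned by $v,w\in W$ and put $F(s,t)\coloneqq f(sv_0+tw_0,\dots,sv_n+tw_n)\in\mathbbl{k}[s,t]_d$; then, by the resultant description of the Hurwitz form recalled above, $\Hu_f$ evaluated at $\ell$ equals, up to a nonzero scalar, $\mathrm{Res}_{s,t}(\partial_s F,\partial_t F)$, which vanishes exactly when $\partial_s F$ and $\partial_t F$ share a zero in $\PP^1$; by Euler's identity $s\,\partial_s F+t\,\partial_t F=dF$ this happens exactly when the binary form $F$ has a repeated root (in particular whenever $F\equiv 0$). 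Hence $[f]\in L^{\ell}$ if and only if $f|_{\ell}$, the restriction of $f$ to $\ell$, is not squarefree, equivalently $\ell\subseteq\mathcal{V}(f)$ or $\ell$ meets $\mathcal{V}(f)$ in strictly fewer than $d$ reduced points.

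For the inclusion $S_0\subseteq\bigcap_{\ell}L^{\ell}$ I would argue directly. If $f=g^2h$ with $g$ non-constant, then $\mathcal{V}(g)\subseteq\PP(W)$ is a hypersurface, hence (since $n\ge 2$) meets \emph{every} line $\ell$; at a point of $\ell\cap\mathcal{V}(g)$ the form $f|_{\ell}=(g|_{\ell})^2\,h|_{\ell}$ vanishes to order at least two, and $f|_{\ell}\equiv 0$ if $\ell\subseteq\mathcal{V}(g)$. In either case $f|_{\ell}$ is not squarefree, so $[f]\in L^{\ell}$ by the criterion above; as $\ell$ was arbitrary, $[f]\in\bigcap_{\ell}L^{\ell}$.

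For the reverse inclusion I would prove the contrapositive by a transversality argument. If $f$ is squarefree, then $\mathcal{V}(f)$ is a reduced hypersurface, so its singular locus has dimension at most $n-2$, and the ``bad'' lines---those meeting $\mathrm{Sing}(\mathcal{V}(f))$, those contained in $\mathcal{V}(f)$, and those tangent to $\mathcal{V}(f)$ at a smooth point of $\mathcal{V}(f)$---form a proper closed subset of the irreducible variety $\Gr(2,W)$: the lines meeting a fixed subvariety of dimension $\le n-2$ form a closed set of dimension $\le(n-2)+(n-1)<\dim\Gr(2,W)$; the lines contained in $\mathcal{V}(f)$ form a proper subvariety because $\mathcal{V}(f)\ne\PP(W)$; and the tangent lines at smooth points of $\mathcal{V}(f)$ are contained in $\mathcal{Z}_1(\mathcal{V}(f))$, a hypersurface by \cite[Proposition~2.11]{Gelfand1994Discriminants} when $\mathcal{V}(f)$ is irreducible and, in general, contained in the finite union of the hypersurfaces $\mathcal{Z}_1$ of the irreducible components of $\mathcal{V}(f)$, the lines through a point lying on two components already being counted in $\mathrm{Sing}(\mathcal{V}(f))$. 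Any line $\ell$ outside this proper closed subset meets $\mathcal{V}(f)$ in exactly $d$ distinct reduced points, so $f|_{\ell}$ is squarefree and $[f]\notin L^{\ell}$; hence $[f]\notin\bigcap_{\ell}L^{\ell}$. Combined with the previous paragraph, this gives $\bigcap_{\ell}L^{\ell}=S_0$ set-theoretically.

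Finally, for the indeterminacy locus: by the remark preceding the lemma, $\Hu$ is, up to a linear embedding of its target, the rational map defined by the linear system spanned by the line conditions in $|\mathcal{O}_{\PP(\Sym^d(W))}(2(d-1))|$, so its indeterminacy locus is the base locus of that system, and this base locus coincides with $\bigcap_{\ell}L^{\ell}$ once one knows the system has no fixed divisorial component---which follows from $\bigcap_{\ell}L^{\ell}=S_0$ having codimension at least two in $\PP(\Sym^d(W))$ (a short dimension count; e.g.\ for $d=3$ one has $S_0=\{[\lambda^2\mu]\}$ of dimension $2n$ inside $\PP(\Sym^3(W))$ of dimension $\binom{n+3}{3}-1$). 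The one genuinely non-formal step is the reverse inclusion $\bigcap_{\ell}L^{\ell}\subseteq S_0$: it relies on the Bertini-type transversality of a general line section of a reduced hypersurface, the only real care being the bookkeeping for reducible or singular $\mathcal{V}(f)$, since the cited irreducibility of $\mathcal{Z}_1$ is stated only for irreducible hypersurfaces. The resultant/Euler criterion, the inclusion $S_0\subseteq\bigcap_{\ell}L^{\ell}$, and the passage from the base locus to the indeterminacy locus are essentially formal.
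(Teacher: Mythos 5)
Your proof is correct and takes essentially the same route as the paper's: both reduce membership in $L^{\ell}$ to the vanishing of the discriminant of $f|_{\ell}$ via the resultant of the two partial derivatives, and both obtain the hard inclusion $\bigcap_{\ell}L^{\ell}\subseteq S_0$ by observing that a general line meets a reduced hypersurface transversally in $d$ distinct smooth points (so a codimension-$0$ singular locus, i.e.\ a non-reduced component, is forced). You spell out details the paper leaves implicit (the easy inclusion $S_0\subseteq\bigcap_{\ell}L^{\ell}$, the dimension count on the bad lines, and the absence of a fixed divisorial component when passing from the base locus to the indeterminacy locus), whereas the paper instead records the irreducibility of the binary discriminant, which it needs later to know each $L^{\ell}$ is an irreducible hypersurface.
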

    
    \begin{proof}
    Fixing a line $\ell \in \Gr(2,W)$, the polynomial $\Hu_f(\ell)$ is a homogeneous degree $2(d-1)$ polynomial in the coefficients of $f$. Its vanishing set agrees with $L^{\ell} \subseteq \PP(\Sym^d(W))$, hence for the first claim it is enough to see that $\Hu_f(\ell)$, for fixed $\ell$, is irreducible as a polynomial in the coefficients of $f$. This is clearly a property invariant under the action of $\PGL_n$, so we can consider the line $\ell = \langle e_0, e_1 \rangle$. Then $\Hu_f(\ell)$ is precisely the discriminant of the generic homogeneous degree $d$ polynomial in two variables $x_0, x_1$, and this is indeed known to be an irreducible polynomial of degree $2(d-1)$ if $\mathrm{char}(\mathbbl{k}) \neq 2$.\\
    The indeterminacy locus of $\Hu$ is the set of $[f]$ such that $\Hu_f(\ell) = 0$ for every line~$\ell$. In this case, the singular locus of the closed subscheme $\mathcal{V}(f) \subseteq \PP(W)$ must have codimension~$0$, otherwise the general line would intersect $\mathcal{V}(f)$ transversally in $d$ distinct smooth points. But the singular locus of $\mathcal{V}(f)$ can only have codimension~$0$ if $\mathcal{V}(f)$ has a non-reduced component, so $f$ is divisible by the square of some non-constant polynomial.
	\end{proof}
	
    This allows us to present the following definition.
    
	\begin{defn}
		A \textit{$1$-complete variety of degree $d$ hypersurfaces} is a morphism $\pi\colon \tilde{V} \rightarrow \PP(\Sym^d(W))$ from a smooth projective variety $\tilde{V}$ which is an isomorphism outside $\pi^{-1}(S_0)$ resolving $\Hu$, i.e., such that the proper transforms of all line conditions $L^{\ell}$ in $\tilde{V}$ do not intersect:
		\begin{equation*}
			\begin{tikzcd}
			\tilde{V} \ar[d, "\pi" left] \ar[dr, "\widetilde{\Hu}" above right]\\
			\PP(\Sym^d(W))\ar[r, dashed, "\Hu"']
			& \PP(|\mathcal{O}_{\Gr(2,W)}(d(d-1))|). \\
			\end{tikzcd}
		\end{equation*}
	\end{defn}
	
	An analogous construction for tangency with respect to $s$-dimensional planes instead of lines would lead to the definition of \textit{$s$-complete varieties of degree $d$ hypersurfaces}. For $s \geq 2$, however, the intersection of all $s$-plane conditions set-theoretically agrees with the subset of $\PP(\Sym^d(W))$ given by all degree $d$ hypersurfaces with singular locus of dimension $\geq n-s$, which to our knowledge is not as easily parametrizable as the set $S_0$ of non-reduced hypersurfaces. 
	
	\begin{thm}\label{thm:counting}
		We write $V_0 \coloneqq \PP(\Sym^d(W))$. Let $\tilde{V}$ be a $1$-complete variety of degree $d$ hypersurfaces as above and let $F \subseteq V_0 \setminus S_0$ be an irreducible, locally closed subset. Denote by $\tilde{F} \subseteq \tilde{V}$ the proper transform of the closure $\overline{F}$ and by $\tilde{L}^{\ell}, \tilde{P}^p \subseteq \tilde{V}$ the line and point conditions of $\tilde{V}$, i.e., the proper transforms in $\tilde{V}$ of the irreducible hypersurfaces $L^{\ell}, P^p \subseteq V_0$ corresponding to line and point conditions of $V_0$, respectively, for the line $\ell \subseteq \PP(W)$ and the point $p \in \PP(W)$.
		\begin{enumerate}
			\item\label{item:proper_intersection} For any finite set of subvarieties $A_1, \ldots, A_r \subseteq \tilde{V}$, there exist a point $p$ and a line $\ell$ such that $\tilde{P}^p$ and $\tilde{L}^{\ell}$ both intersect every $A_i$ properly, i.e. in the expected dimension. In fact, this is the case for the general point and the general line.
			\item\label{item:fin_pts} If $r = \dim(F)$, there exist $r$ lines $\ell_1, \ldots,\ell_r$ such that the corresponding line conditions in $\tilde{V}$ intersect $\tilde{F}$ in finitely many points, mapping to $F$ under~$\pi$. Again, this is the case for general lines $\ell_1, \ldots, \ell_r$.
			\item\label{item:number_elements} The number of elements of $F$, counted with multiplicity, passing through $n_p$ general points and tangent to $n_{\ell}$ general lines such that $n_p + n_{\ell} = \dim(V_0) = \binom{n+d}{d}-1$ equals the degree of the $0$-cycle $\tilde{P}^{n_p} \cdot \tilde{L}^{n_{\ell}} \cdot \tilde{F} \in \mathrm{CH}_0(\tilde{V})$, where $\tilde{P}, \tilde{L}$ denote the cycle classes of any point and line condition $\tilde{P}^p, \tilde{L}^{\ell}$ in $\tilde{V}$.
			\item\label{item:multiplicity} If the points and lines are general enough, the multiplicity of every element of $F$ from \eqref{item:number_elements} is $1$.
		\end{enumerate}
	\end{thm}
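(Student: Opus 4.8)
The plan is to deduce all four statements from a Kleiman-type transversality argument combined with the fact that $\tilde V$ resolves the Hurwitz map, so that the line and point conditions become base-point-free enough to move freely.

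First I would set up \eqref{item:proper_intersection}. The key observation is that the point conditions $P^p$ in $V_0$ are the members of the linear system of hyperplanes, which is base-point-free, so their proper transforms $\tilde P^p$ are the members of a base-point-free linear system on $\tilde V$ pulled back from $V_0$ (no blow-up center lies inside every $P^p$, since a general point $p$ imposes a nontrivial condition). For the line conditions, the whole point of the $1$-complete variety is that $\bigcap_\ell \tilde L^\ell = \emptyset$; moreover the $\tilde L^\ell$ are the members of the linear system defining $\widetilde{\Hu}$, hence base-point-free on $\tilde V$. Now $\PGL(W)$ acts transitively on points $p$ and on lines $\ell$, and although it need not act on $\tilde V$, the families $\{\tilde P^p\}_{p}$ and $\{\tilde L^\ell\}_{\ell}$ are the (base-point-free) linear systems described above, so I can invoke the Kleiman--Bertini theorem in the form ``a general member of a base-point-free linear system meets a fixed subvariety $A_i$ properly'' — or, more precisely, the version that uses the transitive group action downstairs together with the fact that $\tilde P^p$, $\tilde L^\ell$ are pulled back / strict transforms compatibly with that action on the open locus $V_0\setminus S_0$ where $\pi$ is an isomorphism, and are base-point-free everywhere. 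Applying this to each $A_i$ in turn and intersecting the resulting dense open conditions on $p$ and on $\ell$ gives the claim.

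Next, \eqref{item:fin_pts} follows by induction from \eqref{item:proper_intersection}: starting with $A = \tilde F$ of dimension $r$, a general $\tilde L^{\ell_1}$ meets it properly, hence in dimension $r-1$; a general $\tilde L^{\ell_2}$ meets that properly, and so on, so after $r$ general line conditions we are left with a finite set of points. That these points map into $F$ (not merely $\overline F$) under $\pi$ uses that $F$ is locally closed and that the line conditions can be chosen to avoid the proper transform of $\overline F\setminus F$ as well — again an instance of \eqref{item:proper_intersection} applied with $A_i$ ranging over the (finitely many) irreducible components of $\widetilde{\overline F\setminus F}$. For \eqref{item:number_elements}, once \eqref{item:proper_intersection} guarantees that $\tilde P^{p_1},\dots,\tilde P^{p_{n_p}},\tilde L^{\ell_1},\dots,\tilde L^{\ell_{n_\ell}}$ and $\tilde F$ all intersect properly, the intersection product $\tilde P^{n_p}\cdot\tilde L^{n_\ell}\cdot\tilde F$ is represented by an honest effective $0$-cycle supported on this finite set, and its degree in $\mathrm{CH}_0(\tilde V)$ is by definition the sum of the local intersection multiplicities; since the classes $\tilde P, \tilde L$ are independent of the chosen $p,\ell$ (they are the fixed classes of the respective linear systems), this degree equals the desired (multiplicity-weighted) count of elements of $F$, using that $\pi$ restricts to a bijection between the intersection points and the cubics in $F$ satisfying the conditions — the latter because on $V_0\setminus S_0$ the line condition $L^\ell$ really is ``tangent to $\ell$'' by the Lemma identifying $\mathcal V(\Hu_f(\ell))$ with $L^\ell$.

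Finally, \eqref{item:multiplicity} is the assertion that for sufficiently general $p_i,\ell_j$ the scheme-theoretic intersection is reduced, i.e. transverse, at each point. Here I would use the characteristic-$0$ (or large characteristic) generic smoothness: the $\tilde P^p$ and $\tilde L^\ell$ are general members of base-point-free linear systems on the smooth variety $\tilde V$, so by Kleiman--Bertini their common intersection with the smooth locus of $\tilde F$ is smooth, hence reduced, of the expected dimension $0$; thus each intersection point carries multiplicity $1$. One must also check $\tilde F$ is generically smooth along these points, which holds because $\overline F$ — being the closure of a locally closed subset of the smooth locus $V_0\setminus S_0$ — is generically smooth and the finitely many points produced in \eqref{item:fin_pts} can be forced into its smooth locus by one more application of \eqref{item:proper_intersection}.

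The main obstacle is the first bullet, specifically making the Kleiman--Bertini argument rigorous on $\tilde V$ even though $\PGL(W)$ does not act on $\tilde V$: one has a transitive action only on the base $V_0$ (on points and on lines), and the conditions $\tilde L^\ell$, $\tilde P^p$ are strict transforms, not obviously a homogeneous family. The resolution is to note that $\{\tilde L^\ell\}$ and $\{\tilde P^p\}$ are, respectively, the pullbacks to $\tilde V$ of the (honest, base-point-free on $\tilde V$) linear systems $|\widetilde{\Hu}^*\mathcal O(1)|$ and $|\pi^*\mathcal O_{V_0}(1)|$ — the first is base-point-free precisely by the defining property of a $1$-complete variety, the second because it is already base-point-free on $V_0$ — so one applies Bertini/Kleiman to these bona fide linear systems on the smooth projective $\tilde V$ directly, and the transitive action on the base is only needed to see that ``general member of the system'' coincides with ``condition coming from a general point/line,'' which holds on the isomorphism locus $V_0\setminus S_0$ and hence everywhere by taking closures.
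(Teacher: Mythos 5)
Your reduction of \eqref{item:fin_pts} and \eqref{item:number_elements} to \eqref{item:proper_intersection} matches the paper (which defers these deductions to Aluffi), but the core step \eqref{item:proper_intersection} has a genuine gap. The family $\{\tilde{L}^{\ell}\}_{\ell}$ is \emph{not} a base-point-free linear system on $\tilde{V}$, and neither is $\{\tilde{P}^p\}_p$: the line conditions are parametrized by $\Gr(2,W)$, which sits inside the projective space dual to the linear system it spans as (essentially) the $d(d-1)$-Veronese image --- an irreducible, non-degenerate, but positive-codimension subfamily. Bertini applied to the honest base-point-free system $|\widetilde{\Hu}^{\ast}\mathcal{O}(1)|$ only controls a \emph{general linear combination} of line conditions; it says nothing about the special members $\tilde{L}^{\ell}$, which is what the theorem asserts. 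Your proposed resolution of the ``main obstacle'' --- that ``general member of the system'' coincides with ``condition coming from a general point/line'' --- is precisely where the argument breaks: these are different notions, and the transitive $\PGL(W)$-action on lines cannot repair this, since $\PGL(W)$ acts neither on $\tilde{V}$ nor transitively on $V_0$, so Kleiman's transversality theorem does not apply either. The same defect propagates to your treatment of \eqref{item:multiplicity}.

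What is actually needed, and what the paper does, is an irreducibility/non-degeneracy argument on the parameter space of conditions rather than on the linear system. Arguing by contradiction, if every $\tilde{L}^{\ell}$ contained some $A_i$, one picks $p_i \in \widetilde{\Hu}(A_i)$ and uses $\widetilde{\Hu}(\tilde{L}^{\ell}) \subseteq G^{\ell}$, the hyperplane of degree-$d(d-1)$ elements vanishing at $\ell$; then every $G^{\ell}$ contains one of the finitely many points $p_i$. Since the family $\{G^{\ell}\}_{\ell}$ is the irreducible Veronese image of $\Gr(2,W)$ in the dual space, a single $p_1$ would have to lie on all $G^{\ell}$, i.e., a non-zero degree-$d(d-1)$ element of the coordinate ring of $\Gr(2,W)$ would vanish on the whole Grassmannian --- a contradiction. (For point conditions the analogous but easier argument uses that $\{p : \pi(A_i) \subseteq P^p\}$ is a proper closed subset of $\PP(W)$ because $\bigcap_p P^p = \emptyset$.) Without this ingredient --- or a careful incidence-variety dimension count that also handles the jumping of multiplicities of proper transforms along the blow-up centers --- the claim for the \emph{general line} $\ell$ is not established.
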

	
	\begin{proof}
		Both \eqref{item:fin_pts} and \eqref{item:number_elements} follow from \eqref{item:proper_intersection}, the arguments being the same as in \cite[Proposition~1]{Aluffi:1990}. For \eqref{item:proper_intersection}, we also mimic the strategy of \cite[Proposition~1]{Aluffi:1990} and assume the conclusion is false, i.e., there are subvarieties $A_1, \ldots, A_r$ such that for every line $\ell$, the line condition $\tilde{L}^{\ell}$ intersects non-properly at least one of the $A_i$. As $\tilde{L}^{\ell}$ is an irreducible hypersurface and all $A_i$ are irreducible, this means that every line condition in $\tilde{V}$ contains at least one of the $A_i$. Pick a point $p_i \in \widetilde{\Hu}(A_i)$ for each $i = 1, \ldots, r$ and denote by $G^{\ell} \subseteq \PP(|\mathcal{O}_{\Gr(2,W)}(d(d-1))|)$ the hyperplane given by all degree $d(d-1)$ elements of the projective coordinate ring of $\Gr(2,W)$  vanishing at $\ell$. Then the coefficients of the linear equation defining $G^{\ell}$ are those of the polynomial $(\sum_{ij} l_{ij} p_{ij})^{d(d-1)}$, where $l_{ij}$ is the $ij$-th Plücker coordinate of the line $\ell$. It follows that $\Hu(L^{\ell} \setminus S_0) \subseteq G^{\ell}$ and therefore $\widetilde{\Hu}(\tilde{L}^{\ell}) \subseteq G^{\ell}$. In particular, every hyperplane $G^{\ell}$ contains at least one of the finitely many points $p_i$. Dually, in $\check{\PP}(|\mathcal{O}_{\Gr(2,W)}(d(d-1))|)$ this means that all points corresponding to the hyperplanes $G^{\ell}$ are contained in the finite union of hyperplanes corresponding to the points $p_i$. However, the set of points corresponding to the $G^{\ell}$ is the image of the $d(d-1)$-Veronese embedding $\Gr(2,W) \rightarrow \check{\PP}(|\mathcal{O}_{\Gr(2,W)}(d(d-1))|)$ and thus irreducible. Hence, this image would have to be contained in a single hyperplane. In other words, switching back to the primal setting, there exists some $p_i$ that is contained in all hyperplanes $G^{\ell}$. We can assume this point to be $p_1$. Then $p_1$ corresponds to a non-zero element in the degree $d(d-1)$ part of the projective coordinate ring of $\Gr(2,W)$ that, as a polynomial in the Plücker coordinates, must vanish at all lines in $\PP(W)$, hence on all of  $\Gr(2,W)$. This, of course, is impossible.
		
		For \eqref{item:multiplicity}, the arguments of \cite[Lemma~2 and Theorem~I(2)]{Aluffi:1990} generalize to our setting if one substitutes terms like `finitely many' in Lemma~2 with `in codimension~2.'
	\end{proof}

	\section{A $1$-complete variety of cubic hypersurfaces}\label{sec:2}
	
	This section is dedicated to our construction of a $1$-complete variety of cubic hypersurfaces. We start from the projective space $V_0 \coloneqq \PP(\Sym^3 (W))$ parametrizing cubic hypersurfaces in $\PP^n=\PP(W)$ and blow up five times along smooth centers. At each level, these are given by an irreducible component of the intersection of all proper transforms of the line conditions. We will also refer to cubic hypersurfaces as cubics.
	
	We saw in the previous section that $S_0$ coincides with the set of cubic hypersurfaces divisible by the square of a non-constant polynomial.
	Hence, $S_0$ is the image of the morphism
	\begin{equation}\label{eq:phi0}
    	\begin{tikzcd}
    	\phi_0:\PP^n\times\PP^n \arrow[r] & \PP(\Sym^3(W)), \quad ([\lambda],[\mu]) \arrow[r, mapsto] & {[\lambda\mu^2]}. 
    	\end{tikzcd}
	\end{equation}
	As $\phi_0$ is injective, $S_0$ is a subvariety of $\PP(\Sym^3(W))$ of dimension $2n$.
	Let $\Delta$ denote the diagonal in $\PP^n \times \PP^n$. We write $B_0$ for the locus $\phi_0(\Delta)$ of triple hyperplanes.\\
	The following result is a direct generalization of \cite[Lemma 0.1]{Aluffi:1990}. We will often use it without explicit reference. The statement does not depend on the choice of the line $\ell\subseteq \PP^n$, hence can be verified on the equation of any line condition.  
	\begin{lemma}\label{lem:lemma0.1}
		Let $L^{\ell}$ be the line condition in $\PP(\Sym^3(W))$ corresponding to $\ell\subseteq \PP^n$. Then:
		\begin{enumerate}
			\item If $c\in L^{\ell}$, then $L^{\ell}$ is smooth at $c$ if and only if $c$ intersects $\ell$ with multiplicity exactly 2 at a point. In particular, the line conditions are generically smooth along the locus $S_0$ of non-reduced cubics.
			\item If $c$ intersects $\ell$ with multiplicity 3 at a point, then $L^\ell$ has multiplicity 2 at $c$. In particular, the line-conditions have multiplicity 2 along the locus $B_0$ of triple lines.
			\item The tangent hyperplane to $L^{\ell}$ at a smooth point $c$ consists of the cubics containing the point of tangency of $c$ to $\ell$. The tangent cone in $V_0$ to $L^{\ell}$ at a cubic $c$ intersecting $\ell$ in a triple point $p$ is supported on the hyperplane in $V_0$ consisting of the cubics containing $p$.
		\end{enumerate}
	\end{lemma}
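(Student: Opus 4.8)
The plan is to exploit the stated independence of $\ell$ and verify all three assertions on one explicitly chosen line condition. Since ``the cubic $[f]$ is tangent to $\ell$'' is a condition equivariant for the diagonal $\PGL(W)$-action on pairs $([f],\ell)$, and $\PGL(W)$ acts transitively on lines of $\PP^n$, smoothness of $L^\ell$ at a point $c$, the multiplicity of $L^\ell$ at $c$, and the tangent cone of $L^\ell$ at $c$ are all $\PGL(W)$-invariant. I would therefore fix $\ell=\langle e_0,e_1\rangle$, parametrized by $[s:t]\mapsto[s:t:0:\dots:0]$. Restriction to $\ell$ sends $f=\sum_{|\alpha|=3}a_\alpha x^\alpha$ to the binary cubic $g=As^3+Bs^2t+Cst^2+Dt^3$, where $(A,B,C,D)$ are the coordinates of $\PP(\Sym^3W)$ dual to $x_0^3,x_0^2x_1,x_0x_1^2,x_1^3$. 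As recalled in Section~\ref{sec:1}, $L^\ell$ is cut out by the discriminant of $g$,
\[
\mathrm{disc}(g)=18ABCD-4B^3D+B^2C^2-4AC^3-27A^2D^2,
\]
which involves only $A,B,C,D$; hence in any affine chart normalizing one of these four coordinates the entire local analysis of $L^\ell$ at $c$ (smoothness, multiplicity, leading form, tangent cone) is read off from $\mathrm{disc}(g)$ as a polynomial in $A,B,C,D$ alone, the remaining $N-3$ coordinates contributing only a smooth factor.

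Next I would bring $g$ into a normal form using the $\GL_2$-action on the parametrization $[s:t]$ of $\ell$, which extends to an element of $\GL(W)$ fixing $\ell$; one may moreover first apply $\PGL(W)$ to move a prescribed point of $\ell$ to $[e_1]$. If $c$ meets $\ell$ with multiplicity exactly $2$ at a point, then $g$ has a double and a simple root, so one may take $g=s^2t$, i.e.\ $(A,B,C,D)=(0,1,0,0)$, with the double root at $[e_1]$. If $c$ meets $\ell$ in a triple point, then $g$ is a cube and one may take $g=s^3$, i.e.\ $(A,B,C,D)=(1,0,0,0)$. In the only remaining case $g$ has three distinct roots, $\mathrm{disc}(g)\neq0$, and $c\notin L^\ell$; and if $\ell\subseteq\mathcal{V}(f)$ then $(A,B,C,D)=0$, a case vacuous for all three statements.

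It then remains to perform two one-line computations. At $(A,B,C,D)=(0,1,0,0)$ a direct computation gives $\big(\partial_A,\partial_B,\partial_C,\partial_D\big)\mathrm{disc}=(0,0,0,-4)\neq0$, so $L^\ell$ is smooth at $c$; this proves the ``if'' direction of (i), and the ``only if'' direction follows since the other normal forms are singular (checked next) and $\ell\subseteq\mathcal{V}(f)$ lands on the singular vertex $\{A=B=C=D=0\}$. The tangent hyperplane at this smooth $c$ is $\{D=0\}$, and as $D$ is the coefficient of $x_1^3$, vanishing of $D$ says $f(e_1)=0$, i.e.\ $c$ contains the tangency point $[e_1]$ — the smooth-point half of (iii). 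At $(A,B,C,D)=(1,0,0,0)$ all four first partials of $\mathrm{disc}$ vanish, while in the chart $A=1$ the leading (lowest-degree) form of $\mathrm{disc}$ at this point is $-27D^2$; hence $L^\ell$ has multiplicity exactly $2$ at $c$, proving (ii), and its tangent cone is the doubled hyperplane $\{D=0\}$, whose support is again the hyperplane of cubics through the triple point $[e_1]$ — the second half of (iii). The two ``in particular'' clauses then follow, because a general line meets a general non-reduced cubic (respectively a general triple hyperplane) in a point of intersection multiplicity exactly $2$ (respectively $3$).

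I expect the substance to be trivial — three elementary discriminant computations — and the only care to lie in the reductions: confirming that the $\PGL(W)$-equivariance genuinely carries an arbitrary triple (cubic, line, tangency point) to the chosen normal form, that the $\GL_2$-action on $[s:t]$ realizes the two normal forms while fixing $\ell$, and that multiplicity and tangent cone are unaffected by the $N-3$ coordinates on which $\mathrm{disc}$ does not depend. Since only the four coefficients $A,B,C,D$ ever enter, this is verbatim Aluffi's proof of \cite[Lemma~0.1]{Aluffi:1990} and is manifestly independent of $n$.
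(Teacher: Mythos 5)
Your proposal is correct and follows exactly the route the paper indicates: the paper gives no written proof, deferring to Aluffi's Lemma~0.1 together with the remark that $\PGL$-invariance reduces everything to the explicit discriminant equation of a single line condition (cf.\ Remark~\ref{rmk:PGL}), and your computation of the gradient at $(A,B,C,D)=(0,1,0,0)$ and of the leading form $-27D^2$ at $(1,0,0,0)$ supplies precisely the details that argument leaves implicit. The reductions you flag (transitivity on lines, the $\GL_2$-normal forms $s^2t$ and $s^3$, and the fact that the unused coordinates only add free directions to the tangent cone) are all sound.
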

	An immediate generalization of Lemma 0.2 in \cite{Aluffi:1990} implies that the map $\phi_0$ is an isomorphism when restricted to $\PP^n\times\PP^n\setminus\Delta$. 

    The next subsections explain in details the construction of the 1-complete variety of cubic hypersurfaces $\tilde{V}$, a schematic view over this construction and the notation employed is englobed in the following diagram:

	\begin{equation*}
	\begin{tikzcd}\label{construction}
	&  V_5 \ar[d, "\pi_5"]\\
	&  V_4 \ar[d, "\pi_4"]
	& B_4=\PP(\mathcal{E}) \ar[d] \ar[l, "j_4"]\\
	&  V_3 \ar[d, "\pi_3"]
	& B_3 = S_3 \ar[d] \ar[l, "j_3"]
	& \Bl_{\Delta} \PP^n \times \PP^n \ar[l, "\phi_3"] \ar[d, equal]\\
	B_2 \ar[r,"j_2"] \ar[d]
	&  V_2 \ar[d, "\pi_2"]
	& S_2 \ar[d] \ar[l]
	& \Bl_{\Delta} \PP^n \times \PP^n \ar[l,"\phi_2"] \ar[d, equal]\\
	B_1 \ar[r, "j_1"] \ar[d]
	& V_1 \ar[d, "\pi_1"]
	& S_1 \ar[l] \ar[d]
	& \Bl_{\Delta} \PP^n \times \PP^n  \ar[l, "\phi_1"] \ar[d]\\
	B_0 = \nu_3(\PP^n) \ar[r,"j_0"] 
	& V_0 = \PP(\Sym^3(W))
	& S_0  \ar[l]
	& \PP^n \times \PP^n \ar[l, "\phi_0"]
	\end{tikzcd}
	\end{equation*}
	The center of each blow-up is denoted by $B_i$, while the blow-ups are called $V_i$. Hence $V_{i+1}$ denotes the blow-up of $V_{i}$ at the center $B_i$ and $\pi_{i+1}:V_{i+1}\to V_i$ the corresponding blow-up map, for $i=0,\dots,4$. For $i\leq3$, $S_i$ indicates the proper transform of the locus $S_0$ in $V_i$. 
	
	We note that the above diagram is analogous to the one in \cite[p.~514]{Aluffi:1990}. Our construction of the $1$-complete variety of cubic hypersurfaces $V_5$ is indeed a direct generalization to higher dimension of the one performed by Aluffi for plane cubic curves. The same number of blow-ups is needed to empty the locus where the proper transforms of the line conditions intersect. One crucial difference, however, is that the center $B_4$ of the fifth blow-up in our case is the projectivization of a vector bundle $\mathcal{E}$ of rank stricly higher than $1$ which is not a priori known explicitly. Its identification and the computation of its Chern classes are difficult tasks and have no analog in \cite{Aluffi:1990}. In particular, Proposition~\ref{prop:identify_E} provides a new proof of the important Lemma~4.2 of \cite{Aluffi:1990}.

	\setcounter{subsection}{-1}
	\subsection{Space of cubic hypersurfaces }\label{subsec:V0}
	In what follows let $\mathbbl{k}$ be an algebraically closed field of characteristic $\neq 2, 3$ and $W$ a $\mathbbl{k}$-vector space of dimension $n+1$ with basis $e_0, \ldots, e_n$.
	Let us introduce some notation in order to develop the first blow-up. 
	Denote by $[a_I] = [a_{(0,0,0)} : \dots : a_{(n,n,n)}]$ the vector of $\binom{n+3}{3}$ projective coordinates for $V_0$, more explicitly each $a_{(i,j,k)}$ corresponds to the coefficient of the monomial $x_ix_jx_k$ in the equation for the associate cubic in $\PP^n$ where we assume $i\leq j \leq k$. We denote by $[n]$ the set of natural numbers between 0 and $n$. Then in the affine chart $D(a_{(0,0,0)})$, the ideal $\mathcal{I}(B_0)$ in $V_0$ determining the locus of triple hyperplanes is generated by the polynomials $f_J$, where $J$ denotes all multi-indices $(i,j,k)\in [n]^3$ with $i \leq j \leq k$ such that $i$ and $j$ are not both equal to zero, we have:
	\begin{alignat}{2}\label{eq:equations_B0}
	f_{(0,i,i)} &\coloneqq  3a_{(0,i,i)}-a_{(0,0,i)}^2 && \ \qquad\text{ for } i>0, \nonumber \\
	f_{(0,i,j)} &\coloneqq  3a_{(0,i,j)}-2a_{(0,0,i)}a_{(0,0,j)} && \ \qquad\text{ for } j>i>0, \nonumber\\
	f_{(i,i,i)} &\coloneqq  9a_{(i,i,i)}-a_{(0,0,i)}a_{(0,i,i)} && \ \qquad\text{ for } i>0,\\
	f_{(i,i,j)} &\coloneqq  3a_{(i,i,j)}-a_{(0,i,i)}a_{(0,0,j)} && \ \qquad \text{ for } i,j>0, i \neq j, \nonumber\\
	f_{(i,j,k)} &\coloneqq  3a_{(i,j,k)}-a_{(0,i,j)}a_{(0,0,k)} && \ \qquad\text{ for } k>j>i>0, \nonumber
	\end{alignat}
    these will provide equations for the center of the first blow-up. Note that $B_0$ is a smooth complete intersection of codimension $ \binom{n+3}{3}-1 - n$ inside this open chart. In what follows, when we write the affine coordinates $(a_I)$ we always assume the index $(0,0,0)$ to be excluded.
    \begin{rmk}\label{rmk:PGL}
    A line condition is a degree 4 hypersurface in $V_0$.
    Indeed, fix the line $\ell=\mathcal{V}(x_2, \ldots, x_n)\subseteq \PP^n$.
    The tangency condition for a cubic to such line is given by the vanishing of the resultant of its derivatives with respect to $x_0$ and $x_1$.
    Then the equation for the line condition $L^{\ell}$ in $D(a_{(0,0,0)}) \subseteq V_0$ is given by
    \begin{equation}\small{\label{eq:lineCondition}
		a_{(0,0,1)}^2a_{(0,1,1)}^2+18a_{(0,0,1)}a_{(0,1,1)}a_{(1,1,1)}-4a_{(0,1,1)}^3-4a_{(0,0,1)}^3a_{(1,1,1)}-27 a_{(1,1,1)}^2=0.}
    \end{equation}
    Using the action of $\PGL_n$, we can recover the equation for $L^{\ell}$ for any line ${\ell} \subseteq \PP^n$.
	\end{rmk}
    
	\subsection{First Blow-up}\label{subsec:oneblow}
		 Denote by $V_1$ the blow-up of the space $V_0$ along the center $B_0$, and $L_1$ the proper transform in $V_1$ of a line condition $L$.
		 
	\begin{crd1}\label{eq:equations_V1}
		Let $([a_I],[b_J])$ denote the projective coordinates on $V_0 \times \PP^{r-1}$, where $r$ is the codimension of $B_0$ as subvariety of $V_0$ and $J$ denotes all multi-indices $(i,j,k)\in[n]^3$ with $i \leq j \leq k$ such that $i$ and $j$ are not both zero. Then, by \cite[Exercise~17.14(b)]{EisenbudCommAlg} the blow-up $V_1$ is a closed subvariety of the affine chart $D(a_{(0,0,0)})$ given by the equations
		\begin{equation*}
		f_{J_1} b_{J_2} - f_{J_2} b_{J_1}=0,
		\end{equation*}
		where $J_1$, $J_2$ run over all multi-indices $J$ previously described and the $f_J$'s denote the equations in (\ref{eq:equations_B0}). \\
		We restrict to the affine chart $D(b_{(0,1,1)})$, where $V_1$ can be described by the affine coordinates $(a_{(0,0,1)}, \ldots, a_{(0,0,n)},a',b_J)$, where $J$ varies as above but we exclude $J = (0,1,1)$, and where the additional variable $a'$ corresponds to the equation $f_{(0,1,1)} = 3a_{(0,1,1)} - a_{(0,0,1)}^2$. The equations for $V_1$ in this affine open become 
		\begin{equation*}
		a' - f_{(0,1,1)}=0, \qquad f_J - b_J a'=0, \qquad \hbox{for all} \;\;  J \neq (0,1,1).
		\end{equation*}
		The equation for the exceptional divisor $E_1$ inside $V_1$ is then $a'=0$, and the $(b_J)$ provide coordinates in the chosen affine chart for the fiber of $E_1$ over a point in $B_0$. We will always exclude the index $J=(0,1,1)$ when considering the affine coordinates $(b_J)$.
	\end{crd1}
	
	\begin{lemma}\label{lemma:embedding_normal_bundles}
	Denote by $N_{\PP(W)} \PP(\Sym^d(W))$ the normal bundle for the $d$-th power or Veronese embedding $\PP(W) \hookrightarrow \PP(\Sym^d(W))$. Let $e \leq d$. Then there is a natural embedding of normal bundles
		\begin{equation*}
		    \alpha_{e,d}: N_{\PP(W)} \PP(\Sym^{e}(W)) \hookrightarrow N_{\PP(W)} \PP(\Sym^d(W)),
		\end{equation*}
	given by ``multiplication by $\lambda^{d-e}$'' in the fiber over $[\lambda] \in \PP(W)$.
	\end{lemma}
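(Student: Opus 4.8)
The plan is to build the map fiberwise and then check it patches. Over a point $[\lambda] \in \PP(W)$, recall that the normal bundle of a projective subvariety $Z \subseteq \PP(V)$ at a point $z$ fits into the Euler-type sequence; concretely, for the $e$-th Veronese $v_e : \PP(W) \hookrightarrow \PP(\Sym^e(W))$, the fiber of $N_{\PP(W)}\PP(\Sym^e(W))$ over $[\lambda]$ is identified with $\Hom(\langle \lambda^e\rangle, \Sym^e(W)/\langle \lambda^e\rangle)$, where $\langle \lambda^e \rangle \subseteq \Sym^e(W)$ is the line spanned by the image point. The first step is therefore to write this identification down carefully: differentiating the Veronese map, the tangent space $T_{[\lambda]}\PP(W) = \Hom(\langle\lambda\rangle, W/\langle\lambda\rangle)$ maps into $T_{[\lambda^e]}\PP(\Sym^e(W)) = \Hom(\langle\lambda^e\rangle, \Sym^e(W)/\langle\lambda^e\rangle)$ by $\psi \mapsto (\lambda^e \mapsto e\,\lambda^{e-1}\psi(\lambda))$, and the normal bundle is the cokernel. (This is exactly where $\chr(\mathbbl{k}) = 0$ or $> d$ is used, so that the factor $e$ and later $d$ are invertible and these maps have the expected ranks.)

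Next I would define $\alpha_{e,d}$ on fibers by ``multiplication by $\lambda^{d-e}$'': a class represented by a homomorphism $\langle \lambda^e\rangle \to \Sym^e(W)$, $\lambda^e \mapsto g$, is sent to the class of $\langle\lambda^d\rangle \to \Sym^d(W)$, $\lambda^d \mapsto \lambda^{d-e} g$. The key points to verify are (a) this is well defined modulo the respective lines, i.e. it descends to the quotients — immediate since $\lambda^{d-e}\langle\lambda^e\rangle = \langle\lambda^d\rangle$; (b) it is injective on each fiber — if $\lambda^{d-e}g \in \langle\lambda^d\rangle$ then $g \in \langle\lambda^e\rangle$ because $\Sym(W)$ is a domain (a polynomial ring), so multiplication by the nonzero form $\lambda^{d-e}$ is injective and reflects divisibility by $\lambda^{d-e}$; (c) compatibility with the embeddings of tangent spaces, so that $\alpha_{e,d}$ really is a map of \emph{normal} bundles, not merely of the ambient $\Hom$-bundles: one checks the square relating $T\PP(W) \to T\PP(\Sym^e W)$, $T\PP(W) \to T\PP(\Sym^d W)$ and multiplication by $\lambda^{d-e}$ commutes up to the nonzero scalar $e/d$, which is harmless projectively and, in any case, only affects the image of the sub we are quotienting by.

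To globalize, I would phrase the construction invariantly: on $\PP(W)$ with tautological sub-line-bundle $\OO(-1) \subseteq W \otimes \OO$, the Veronese image line is $\OO(-e) = \Sym^e\big(\OO(-1)\big) \subseteq \Sym^e(W)\otimes\OO$, and the map ``multiply by the $(d-e)$-th power of a local generator of $\OO(-1)$'' is the canonical bundle map $\Sym^e(W)\otimes\OO(-(d-e)) \hookrightarrow \Sym^d(W)\otimes\OO$ obtained from the symmetric-power multiplication $\Sym^{d-e}\OO(-1) \otimes \Sym^e(W) \to \Sym^d(W)$ tensored with $\OO$; this is manifestly a morphism of vector bundles, it carries $\OO(-d)$ to $\OO(-d)$, and on the normal-bundle quotients it induces $\alpha_{e,d}$. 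Its injectivity as a bundle map is the fiberwise statement (b) above. The main obstacle, and the only thing requiring genuine care rather than bookkeeping, is step (c): making sure the comparison of the two tangent-space embeddings is correct so that $\alpha_{e,d}$ genuinely lands in the normal bundle and is injective there — equivalently, that the preimage under ``multiply by $\lambda^{d-e}$'' of the tangent directions along the $d$-th Veronese consists exactly of the tangent directions along the $e$-th Veronese. This follows once one notes that a degree-$d$ form of the shape $\lambda^{d-e-1}(\text{stuff})$ lies in $\lambda^{d-1}W$ if and only if the stuff lies in $\lambda^{e-1}W$, again by unique factorization in $\Sym(W)$.
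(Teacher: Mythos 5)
Your plan is correct and follows essentially the same route as the paper: the paper also identifies the normal bundles via the pulled-back Euler sequences and defines $\alpha_{e,d}$ as the quotient of the bundle map $\Sym^e(W)\otimes\OO_{\PP(W)}(e)\to\Sym^d(W)\otimes\OO_{\PP(W)}(d)$ given by multiplication with $(e_0\otimes x_0+\cdots+e_n\otimes x_n)^{d-e}$, i.e.\ fiberwise multiplication by $\lambda^{d-e}$, checking compatibility with the differentials of the Veronese embeddings. Your step (c) is the point the paper leaves as ``it can be checked,'' and your unique-factorization argument is the right justification (note only the small exponent slip: the statement you need is that $\lambda^{d-e}g\in\lambda^{d-1}W$ if and only if $g\in\lambda^{e-1}W$).
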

	
	\begin{proof}
		We write $R \coloneqq \mathbbl{k}[x_0, \ldots, x_n]$. The pullback of the Euler sequence on $\PP(\Sym^d(W))$ via $\nu_d$ is
		\begin{equation*}
		0 \rightarrow \OO_{\PP(W)} \overset{\nu_d^{\ast}(\varepsilon)}{\rightarrow} \Sym^d(W) \otimes \OO_{\PP(W)}(d) \rightarrow T\PP(\Sym^d(W))|_{\PP(W)} \rightarrow 0,
		\end{equation*}
		where $\nu_d^{\ast}(\varepsilon)$ is induced by the graded $R$-module homomorphism
		\begin{equation*}
		R \rightarrow \Sym^d(W) \otimes_{R} R(d), \ f \mapsto \sum_{|I|=d} \binom{d}{I} e_I \otimes (x^I f) = f \cdot (e_0 \otimes x_0 + \dots e_n \otimes x_n)^d,
		\end{equation*}
		The fiber of $\nu_{d}(\varepsilon)$ over $\lambda$ is therefore just multiplication by $\lambda^d = (\lambda_0 e_0 + \ldots + \lambda_n e_n)^d$. More generally, there is a commutative diagram with exact rows
		\begin{equation*}
		\begin{tikzcd}\label{Veronese}
		0 \ar[r]
		& \OO_{\PP(W)} \ar[r] \ar[d, equal]
		& W \otimes \OO_{\PP(W)}(1) \ar[r] \ar[d, "\alpha_{1,e}"]
		& T\PP(W) \ar[r] \ar[d, "\overline{\alpha_{1,e}} = \mathrm{d}\nu_{e}"]
		& 0 \\
		0 \ar[r]
		& \OO_{\PP(W)} \ar[r] \ar[d, equal]
		& \Sym^{e}(W) \otimes \OO_{\PP(W)}(e) \ar[r] \ar[d, "\alpha_{e,d}"]
		& T\PP(\Sym^{e}(W))|_{\PP(W)} \ar[r] \ar[d, "\overline{\alpha_{e,d}}"]
		& 0 \\
		0 \ar[r]
		& \OO_{\PP(W)} \ar[r]
		& \Sym^d(W) \otimes \OO_{\PP(W)}(d) \ar[r]
		& T\PP(\Sym^d(W))|_{\PP(W)} \ar[r]
		& 0.
		\end{tikzcd}
		\end{equation*}
		In here, $\alpha_{e,d}$ is induced by the graded $R$-module homomorphism which is multiplication by $(e_0 \otimes x_0 + \ldots + e_n \otimes x_n)^{d-e}$. It can be checked that $\overline{\alpha_{1,e}} = \mathrm{d}\nu_{e}$ is the differential of the $e$-th Veronese embedding. Then $\alpha_{e,d}$ induces the embedding of normal bundles we are looking for.
	\end{proof}
	
	For us, $e=2$, $d=3$. The exceptional divisor is $E_1 \cong \PP(N_{\PP(W)} \PP(\Sym^3(W)))$ and we call $B_1$ the image of $\PP(\alpha_{2,3})$ in $E_1$. The proper transform of $S_0$ in $V_1$ will be denoted by $S_1$.
	
	\begin{prop}%[Generalization of Aluffi, Proposition~1.2, p. 519]
		The intersection of the proper transforms of all line conditions in $V_1$ is contained in the union $S_1 \cup B_1$.
	\end{prop}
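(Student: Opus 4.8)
\emph{Proof plan.}
The plan is to separate the open locus $V_1\setminus E_1$, on which $\pi_1$ restricts to an isomorphism onto $V_0\setminus B_0$, from the exceptional divisor $E_1=\PP(N_{\PP(W)}\PP(\Sym^3(W)))$. Over $V_1\setminus E_1$ the statement is immediate: under the isomorphism $\pi_1\colon V_1\setminus E_1\xrightarrow{\ \sim\ }V_0\setminus B_0$, each $L^{\ell}_1$ corresponds to $L^{\ell}\setminus B_0$ and $S_1$ corresponds to $S_0\setminus B_0$, so the identification of $\bigcap_{\ell}L^{\ell}$ with $S_0$ established in Section~\ref{sec:1} gives $\bigcap_{\ell}L^{\ell}_1\cap(V_1\setminus E_1)=S_1\cap(V_1\setminus E_1)\subseteq S_1$. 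Everything therefore happens over $E_1$, and there we will prove the stronger inclusion $\bigcap_{\ell}L^{\ell}_1\cap E_1\subseteq B_1$.

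Over $E_1$ we argue fiberwise over a point $[\mu^3]\in B_0=\nu_3(\PP(W))$. By Lemma~\ref{lemma:embedding_normal_bundles} and its proof, the fiber of $E_1$ over $[\mu^3]$ is $\PP(\Sym^3(W)/\mu^2W)$, and the fiber of $B_1$ is the linear subspace $\PP(\mu\cdot\Sym^2(W)/\mu^2W)$, namely the image of $\PP(\alpha_{2,3})$ with $\alpha_{2,3}$ given by multiplication by $\mu$. Now fix a line $\ell\subseteq\PP(W)$ not contained in the hyperplane $\{\mu=0\}$. Then $\mu^3$ meets $\ell$ in a triple point $p=\ell\cap\{\mu=0\}$, so by Lemma~\ref{lem:lemma0.1} the hypersurface $L^{\ell}$ has multiplicity exactly $2$ along $B_0$ near $[\mu^3]$, and its tangent cone there is the double of the hyperplane $\{[f]:f(p)=0\}$ of cubics passing through $p$. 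Passing to a blow-up chart, this shows that $L^{\ell}_1\cap E_1$ over $[\mu^3]$ is the fiberwise zero locus, inside $\PP(\Sym^3(W)/\mu^2W)$, of the degree-$2$ leading form of $L^{\ell}$ along $B_0$; by the previous sentence this leading form equals at $[\mu^3]$, up to a nonzero scalar, the square of the functional on $\Sym^3(W)/\mu^2W$ induced by evaluation at $p$ (this functional $\mathrm{ev}_p$ is well defined on the quotient precisely because $\mu(p)=0$). Hence $L^{\ell}_1\cap E_1$ over $[\mu^3]$ is the hyperplane $\{v:\mathrm{ev}_p(v)=0\}$. As $\ell$ ranges over all lines not contained in $\{\mu=0\}$, the point $p$ runs over all of $\{\mu=0\}$, so $\bigcap_{\ell}L^{\ell}_1\cap E_1$ over $[\mu^3]$ lies in $\PP(\{v\in\Sym^3(W)/\mu^2W:\mathrm{ev}_p(v)=0\ \text{for all}\ p\in\{\mu=0\}\})$. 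Finally, a cubic vanishing on the hyperplane $\{\mu=0\}$ is divisible by $\mu$, so this common zero locus is exactly $\mu\cdot\Sym^2(W)/\mu^2W$, the fiber of $B_1$; together with the first paragraph this gives $\bigcap_{\ell}L^{\ell}_1\subseteq S_1\cup B_1$.

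The step I expect to require the most care is the blow-up computation in the middle of the second paragraph: one has to use that $L^{\ell}$ has multiplicity $2$ along the entire center $B_0$ (and not merely at the point $[\mu^3]$) in order to identify the restriction of its proper transform to $E_1$ with the relative projectivized cone cut out fiberwise by the degree-$2$ leading form, and one has to match that leading form at $[\mu^3]$ with $(\mathrm{ev}_p)^2$ on $\Sym^3(W)/\mu^2W$ under the identifications coming from Lemma~\ref{lemma:embedding_normal_bundles}. The remaining ingredients — the reduction to $E_1$, the fact that the points $p$ sweep out $\{\mu=0\}$, and the divisibility statement identifying the common zero locus with the fiber of $B_1$ — are routine.
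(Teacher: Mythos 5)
Your proof is correct and follows essentially the same route as the paper's: reduce to the exceptional divisor $E_1$, identify the trace of each $L^{\ell}_1$ on the fiber over $[\mu^3]$ with the (projectivized) tangent cone of $L^{\ell}$ there, invoke Lemma~\ref{lem:lemma0.1} to see that this is the hyperplane of cubics through $p=\ell\cap\{\mu=0\}$, and let $p$ sweep out $\{\mu=0\}$ to conclude divisibility by $\mu$, i.e.\ containment in the fiber of $B_1$. Your write-up merely makes explicit the blow-up bookkeeping (leading form along $B_0$ versus tangent cone at a point) that the paper leaves implicit.
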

	
	\begin{proof}
		It is enough to check that the intersection of the proper transforms of all line conditions and $E_1$ lies inside $B_1$. The intersection of the proper transform $L_1$ of a line condition $L$ with the fiber over $[\lambda^3] \in B_0$ is the image of the tangent cone of $L$ at the point $[\lambda^3]$ in the projectivized normal bundle $\PP(N_{B_0} V_0)$. By definition of $\alpha_{2,3}$ in Lemma~\ref{lemma:embedding_normal_bundles}, the fiber of $B_1$ over $[\lambda^3]$ consists of all cubics divisible by $\lambda$. Lemma~\ref{lem:lemma0.1}(iii) implies that the intersection of all tangent cones at $[\lambda^3]$ of all line conditions is contained in the set of cubics containing the hyperplane $\lambda$. This shows the claim.
	\end{proof}

	\begin{lemma}\label{lemma:liftofphi0}
		The universal property of blowing up gives a commutative diagram
		\[ 
		\begin{tikzcd}
		{\Bl_{\Delta}\PP^n\times\PP^n} \arrow[r, "\phi_1", "\cong"'] \arrow[d]
		& S_1 \arrow[d] \arrow[r, hook] & V_1 \arrow[d] \\
		{\PP^n\times\PP^n} \arrow[r, "\phi_0"']
		& S_0 \arrow[r, hook] & V_0.
		\end{tikzcd}
		\]
		Here, $\phi_1$ is an isomorphism, hence $S_1$ is smooth.
	\end{lemma}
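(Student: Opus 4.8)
The plan is to produce $\phi_1$ from the universal property of the blow-up $\pi_1$, to note that it is an isomorphism away from the exceptional loci (this is exactly the generalization of \cite[Lemma~0.2]{Aluffi:1990} already invoked), and then to verify by an explicit computation in the coordinates of Coordinates~I that it is an isomorphism along the exceptional divisor as well; smoothness of $S_1$ will then be automatic, since $\Bl_{\Delta}\PP^n\times\PP^n$ is a blow-up of a smooth variety along a smooth center.

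First I would show that $\phi_0^{-1}(\mathcal{I}(B_0))\cdot\OO_{\PP^n\times\PP^n}=\mathcal{I}(\Delta)^2$. By $\PGL(W)$-equivariance it is enough to see this in the chart $D(a_{(0,0,0)})$, where $S_0$ is parametrized by $(\lambda_1,\dots,\lambda_n,\mu_1,\dots,\mu_n)\mapsto[\lambda\mu^2]$ with $\lambda=x_0+\sum_i\lambda_ix_i$, $\mu=x_0+\sum_i\mu_ix_i$, and $\Delta$ is cut out by $(\mu_i-\lambda_i:i)$. Using the equations \eqref{eq:equations_B0} together with $a_{(0,0,i)}\circ\phi_0=\lambda_i+2\mu_i$, one computes
\[
f_{(0,i,i)}\circ\phi_0=-(\mu_i-\lambda_i)^2,\qquad f_{(0,i,j)}\circ\phi_0=-2(\mu_i-\lambda_i)(\mu_j-\lambda_j)\ \ (0<i<j),
\]
and one checks that every remaining $f_J\circ\phi_0$ lies in the ideal these generate; since the displayed polynomials form a generating set of $\mathcal{I}(\Delta)^2$, the pulled-back ideal equals $\mathcal{I}(\Delta)^2$ exactly. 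Its further pullback to $\Bl_{\Delta}\PP^n\times\PP^n$ is then $\mathcal{I}(\Delta)^2\cdot\OO_{\Bl_{\Delta}\PP^n\times\PP^n}=\OO(-2E)$, with $E$ the exceptional divisor, which is invertible. Hence the universal property of blowing up \cite[Exercise~17.14]{EisenbudCommAlg} yields a unique morphism $\phi_1\colon\Bl_{\Delta}\PP^n\times\PP^n\to V_1$ with $\pi_1\circ\phi_1=\phi_0\circ\beta$ (where $\beta$ is the blow-down), giving the asserted commutative diagram; its image is the closure of $\phi_0(\PP^n\times\PP^n\setminus\Delta)=S_0\setminus B_0$, that is, $S_1$.

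Next I would show that $\phi_1$ is an isomorphism onto $S_1$. Since $\phi_1$ is proper, it suffices that it is injective and unramified: then $\phi_1$ is a closed immersion, and a closed immersion between irreducible varieties of the same dimension $2n$ whose target is reduced is an isomorphism. Over $V_1\setminus E_1=V_0\setminus B_0$ this is clear, $\phi_1$ restricting there to the isomorphism $\phi_0\colon\PP^n\times\PP^n\setminus\Delta\to S_0\setminus B_0$. Along $E$, injectivity follows because, fiberwise over $[\lambda]\in\PP^n\cong\Delta$ and under the identifications $E\cong\PP(N_{\Delta}(\PP^n\times\PP^n))$, $E_1\cong\PP(N_{B_0}V_0)$, the restriction $\phi_1|_E$ is the map $[v]\mapsto[\lambda v^2]$ from $\PP(W/\langle\lambda\rangle)$ to $\PP(\Sym^3(W)/\lambda^2W)$ (read off from $f_{(0,i,i)}\circ\phi_0=-(\mu_i-\lambda_i)^2$), which is injective since $\lambda v^2\equiv\lambda v'^2\pmod{\lambda^2W}$ forces $\lambda\mid(v-v')(v+v')$, hence $[v]=[v']$ in $\PP(W/\langle\lambda\rangle)$ as $\lambda$ is prime. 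For unramifiedness one checks that $\mathrm d\phi_1$ is injective at the points of $E$: in the chart $D(a_{(0,0,0)})$ of $V_0$, the chart $D(b_{(0,1,1)})$ of $V_1$, and the blow-up chart $t_i=t_1\tau_i$ (with $\tau_1=1$, $t_i=\mu_i-\lambda_i$) of $\Bl_{\Delta}\PP^n\times\PP^n$ with coordinates $(\lambda_1,\dots,\lambda_n,t_1,\tau_2,\dots,\tau_n)$, the morphism $\phi_1$ is given by $\phi_1^\ast a_{(0,0,i)}=3\lambda_i+2t_1\tau_i$, $\phi_1^\ast a'=-t_1^2$, $\phi_1^\ast b_{(0,1,i)}=2\tau_i$ for $i\geq2$, $\phi_1^\ast b_{(0,i,i)}=\tau_i^2$, $\phi_1^\ast b_{(i,i,i)}=2(\lambda_i+t_1\tau_i)\tau_i^2$, and so on; a short determinant computation then shows that the Jacobian with respect to $(\lambda_1,\dots,\lambda_n,t_1,\tau_2,\dots,\tau_n)$ has full rank $2n$ at every point, including along $\{t_1=0\}$. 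The remaining charts of $\Bl_{\Delta}\PP^n\times\PP^n$, and the rest of $E$, are reduced to this case by permuting the indices $1,\dots,n$ and applying $\PGL(W)$. This proves $\phi_1$ is an isomorphism onto $S_1$, and $S_1$ is then smooth.

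I expect the last local computation to be the only real obstacle: one must verify that a \emph{single} blow-up of $\Delta$ already resolves the (generically quadratic) contact between $S_0$ and the center $B_0$, i.e.\ that $S_1$ is smooth along $S_1\cap E_1$. In Aluffi's situation ($n=2$) this is a two-variable check; here it must be carried out in arbitrary dimension, which is where the precise shape of the equations $f_J$ and of the parametrization $\phi_0$ is used, and where one has to make sure that the chart employed, together with the $\PGL(W)$-action, genuinely covers all of $E$.
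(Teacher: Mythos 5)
Your proposal is correct and follows essentially the same route as the paper: $\phi_1$ is produced from the universal property after verifying that $\phi_0^{-1}(\mathcal{I}(B_0))\cdot\mathcal{O}_{\PP^n\times\PP^n}=\mathcal{I}(\Delta)^2$, and the isomorphism is then established by checking injectivity plus injectivity of the differential along the exceptional divisor through the same explicit chart computation (your coordinates $(\lambda_i,t_1,\tau_i)$ match the paper's $(\mu_i,u_1,s_i)$, and the full-rank rows you need are exactly $a_{(0,0,i)}$, $b_{(0,1,i)}$ and $b_{(1,1,1)}$, since the $a'$-row degenerates on $E$). The only cosmetic differences are that you prove injectivity along $e$ directly from the fiberwise description $[v]\mapsto[\lambda v^2]$ where the paper invokes universal injectivity of $\phi_0$ and the blow-up closure lemma, and that you conclude via ``proper, injective and unramified implies closed immersion'' where the paper cites the analogous criterion from \cite[Corollary~14.10]{harris2013algebraic}.
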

	
	\begin{proof}
		We write $e$ for the exceptional divisor of $\Bl_{\Delta}\PP^n\times\PP^n$. The map $\phi_0$ lifts to a map  $\phi_1: \Bl_{\Delta}\PP^n\times\PP^n \to S_1$ via the universal property of blowing up. Indeed, it can be checked that the pullback of the ideal sheaf $\mathcal{I}(B_0)$ via $\phi_0$ is precisely the squared ideal sheaf $\mathcal{I}(\Delta)^2$ of the diagonal $\Delta \subseteq \PP^n \times \PP^n$, in particular the pullback of $\mathcal{I}(B_0)$ to $\Bl_{\Delta}\PP^n\times\PP^n$ is an effective Cartier divisor, as needed. Clearly, $\phi_1$ restricts to an isomorphism of $\Bl_{\Delta}\PP^n\times\PP^n \setminus e$ onto $S_1 \setminus E_1$. As $\Bl_{\Delta}\PP^n\times\PP^n$ and $S_1$ are projective varieties, $\phi_1$ is a closed map, so surjectivity follows. In order to prove the injectivity of $\phi_1$ we observe that $\phi_0$ is an injective morphism between varieties over an algebraically closed field, hence $\phi_0$ is universally injective. Base-changing $\phi_0$ along the blow-up map $\pi_1:V_1 \to V_0$ hence gives an injection $(\PP^n \times \PP^n) \times_{V_0} V_1 \to V_1$. The blow-up closure lemma ensures that $\Bl_{\Delta}\PP^n\times\PP^n$ is naturally a closed subscheme of $(\PP^n \times \PP^n) \times_{V_0} V_1$, and the composition $\Bl_{\Delta}\PP^n\times\PP^n \to V_1$ agrees with $\phi_1$, showing that $\phi_1$ is injective.
		By \cite[Corollary~14.10]{harris2013algebraic}, it remains to show that $(d\phi_1)_p: T_p (\Bl_{\Delta}\PP^n\times\PP^n) \to T_{\phi_1(p)} V_1$ is injective for all $p$ in the exceptional divisor $e$ of $\Bl_{\Delta}\PP^n\times\PP^n$. This matter is local and invariant under the $\PGL_n$-action, so we can assume $p$ to lie in the fiber of $([1:0:\cdots:0],[1:0:\cdots:0]) \in \Delta$. Choose local coordinates
		\[([1 : \lambda_1 : \cdots : \lambda_n], [1 : \mu_1 : \cdots : \mu_n]) \in \PP^n\times\PP^n.\]
		The equations for $\Delta$ are $u_i\coloneqq \lambda_i - \mu_i =0$ for all $i \in \{1,\dots ,n\}$. Thus, $\Bl_{\Delta}\PP^n\times\PP^n$ is described by the points $(\mu_1,\dots,\mu_n,u_1,\dots,u_n, [s_1,\dots,s_n])$  such that $u_i s_{j} - u_j s_{i}=0$ for all $i,j$. In the affine chart $D(s_1)$, the morphism $\phi_1$ is given explicitly in the affine coordinates $(\mu_1,\dots,\mu_n,u_1,s_2,\dots,s_n)$ by 
		\begin{equation*}
		\begin{split}
		a_{(0,0,1)}&=3\mu_1 + u_1,\\
		a_{(0,0,i)}&=3\mu_i + s_i u_1 \;\; \hbox{ for }i>1,\\
		a'&=-u_1^2,\\
		b_{(0,i,i)}&=s_i^2 \;\; \quad \qquad \;\,  \hbox{ for }i>1,\\
		b_{(1,i,i)}&=2\mu_i s_i \;\;  \qquad \hbox{ for }i>1,\\
		b_{(1,1,i)}&=2\mu_1 s_i \;\;\qquad  \hbox{ for }i>1,
		\end{split}\qquad \qquad 
		\begin{split}
		b_{(1,1,1)}&=2\mu_1,\\
		b_{(0,1,i)}&=2s_i \;\;\quad \;\;\;\qquad \;\;\quad \hbox{ for }i>1,\\
		b_{(0,i,j)}&=2s_i s_j \;\;\,\;\;\; \qquad \;\;\quad\hbox{ for }j>i>1,\\
		b_{(i,i,i)}&=2 \mu_i s_i^2 \;\; \;\;\;\qquad \;\;\quad\hbox{ for }i>1,\\
		b_{(i,i,j)}&=2\mu_i s_i s_j \;\;\qquad \;\;\quad \hbox{ for } i,j>1, i \neq j,\\
		b_{(i,j,k)}&=2s_k(\mu_i s_j+\mu_j s_i)\;\hbox{ for }k>j>i>0.
		\end{split}
		\end{equation*}
		The exceptional divisor $e$ now has equation $u_1 = 0$. This explicit description of $\phi_1$ allows us to conclude the proof by checking the non-degeneracy of the Jacobian at every point. Indeed, the $2n$ row vectors in the Jacobian corresponding to $a_{(0,0,i)}$ for $1 \leq i \leq n$, to $b_{(0,1,i)}$ for $2 \leq i \leq n$ and to $b_{(1,1,1)}$ are linearly independent.
	\end{proof}

	\begin{lemma}\label{lem:tangency} The set-theoretic intersection of $B_1$ and $S_1$ is $\phi_1(e)$. Moreover, the proper transforms of the line conditions are generically smooth and tangent to $E_1$ along $B_1$.
	\end{lemma}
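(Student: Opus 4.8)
The plan is to work in the explicit local coordinates introduced in the proof of Lemma~\ref{lemma:liftofphi0}, since both the identification of $B_1 \cap S_1$ and the tangency statement are local along $E_1$ and invariant under the $\PGL_n$-action. First I would pin down $B_1$ locally: by definition $B_1 = \PP(\alpha_{2,3})(E_1')$ where $E_1'$ is the exceptional divisor of the Veronese embedding, i.e.\ the fiber of $B_1$ over $[\lambda^3] \in B_0$ consists of the cubics divisible by $\lambda$ (as used in the proof of the previous Proposition). Concretely, a cubic divisible by $\lambda = x_0 + \lambda_1 x_1 + \cdots$ and lying in the normal-bundle fiber is, up to the first-order identification, of the form $\lambda \cdot q$ for a quadric $q$; writing this out in the $(b_J)$-coordinates on $E_1$ gives the $b_J$ as the coefficients of $\lambda q$, subject to the constraint that the ``$\mu$-part'' (the part recording $q$) is unconstrained while $\lambda$ is fixed. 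Comparing with the formulas for $\phi_1$ on the exceptional divisor $e$ (set $u_1 = 0$ there, so $a' = 0$, $a_{(0,0,i)} = 3\mu_i$, and the $b_J$ become the coefficients of $\lambda \mu^2$ with $\lambda = x_0 + s_i x_i \cdots$, $\mu = x_0 + \mu_i x_i \cdots$), one sees directly that $\phi_1(e) \subseteq B_1 \cap S_1$: a square $\lambda\mu^2$ is certainly divisible by $\lambda$.

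For the reverse inclusion $B_1 \cap S_1 \subseteq \phi_1(e)$, I would argue as follows. A point of $S_1$ over $[\lambda\mu^2] \in S_0$ with $[\lambda] \neq [\mu]$ lies in $S_1 \setminus E_1 \cong (\PP^n\times\PP^n)\setminus e$ and is not in $E_1$ at all, hence not in $B_1 \subseteq E_1$; so $B_1 \cap S_1$ is contained in $S_1 \cap E_1 = \phi_1(e) \cup (\text{something})$ — actually $S_1 \cap E_1$ is exactly the image under $\phi_1$ of $e$ together with possibly more of $E_1$, but since $\phi_1$ is an isomorphism onto $S_1$ (Lemma~\ref{lemma:liftofphi0}) we get $S_1 \cap E_1 = \phi_1(e)$ precisely because $e = \phi_1^{-1}(E_1)$ is the preimage of the exceptional divisor and $\phi_1$ maps $\Bl_\Delta \PP^n\times\PP^n \setminus e$ isomorphically onto $S_1 \setminus E_1$. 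Therefore $B_1 \cap S_1 = B_1 \cap \phi_1(e)$, and the content is that $\phi_1(e) \subseteq B_1$, which the local computation above supplies. Combining both inclusions yields $B_1 \cap S_1 = \phi_1(e)$ set-theoretically.

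For the second assertion, I would compute, for a general point $c \in B_1$, the tangent cone in $V_1$ to the proper transform $L_1$ of a line condition $L$. Recall from Lemma~\ref{lem:lemma0.1}(ii)--(iii) that along $B_0$ (triple hyperplanes) the line condition $L$ has multiplicity $2$ with tangent cone supported on the hyperplane of cubics through the point $p = \ell \cap \lambda$, while a general point of $S_0$ (where $c$ intersects $\ell$ with multiplicity $2$) is a smooth point of $L$. Passing to the blow-up $V_1$: over a general $[\lambda^3] \in B_0$, the proper transform $L_1$ meets the exceptional fiber $\PP(N_{B_0}V_0)$ in the projectivized tangent cone, which by Lemma~\ref{lem:lemma0.1}(iii) is (the intersection with the normal directions of) the cubics containing $p$ — and a general such cubic divisible by $\lambda$ is reduced, i.e.\ it is a general point of $B_1 \setminus \phi_1(e)$. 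Generic smoothness of $L_1$ along $B_1$ then follows because multiplicity drops to $1$ after one blow-up along $B_0$ exactly where the tangent cone is a reduced hyperplane section (this is the classical ``blow-up of a multiplicity-2 locus with smooth support and reduced projectivized tangent cone is smooth there'' statement, which in these coordinates is a direct Jacobian check analogous to the one ending the proof of Lemma~\ref{lemma:liftofphi0}). For the tangency to $E_1$: one checks in the local coordinates that $L_1$ and $E_1 = \{a' = 0\}$ have the same tangent hyperplane at a general point of $B_1$, equivalently that $a'$ vanishes to order $\geq 2$ on $L_1$ along $B_1$ — this is exactly the reflection of $L$ having multiplicity $2$ along $B_0$, so that after dividing the local equation of $L$ by $(a')^2$ to get the equation of $L_1$, the residual equation still vanishes on $\{a'=0\} \cap B_1$ up to first order, forcing the tangent space of $L_1$ to contain that of $E_1$ (and the containment is an equality for dimension reasons). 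I expect the main obstacle to be the bookkeeping in this last step: one must carefully expand the degree-$4$ polynomial defining a line condition (Remark~\ref{rmk:PGL}, equation~\eqref{eq:lineCondition}) in the $(a_{(0,0,i)}, a', b_J)$-coordinates near a general point of $B_1$ — not over a triple line, where Lemma~\ref{lem:lemma0.1}(ii) applies cleanly, but over a general point of $B_1 \setminus \phi_1(e)$ — and verify that the linear term in $a'$ of the proper transform vanishes identically along $B_1$. Choosing the line $\ell$ and the coordinates adapted to the general point $c \in B_1$ (so that $c = \lambda q$ with $q$ a general quadric and $\ell$ a general line meeting the hyperplane $\lambda = 0$) should reduce this to the one-variable discriminant computation already used in the proof of the Lemma in Section~\ref{sec:1}, keeping the calculation manageable.
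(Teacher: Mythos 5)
Your overall route coincides with the paper's: part (i) is proved exactly as you describe (use that $\phi_1$ is an isomorphism to get $S_1\cap E_1=\phi_1(e)$, then check $\phi_1(e)\subseteq B_1$ in the explicit coordinates over the fiber of $[x_0^3]$, which suffices by $\PGL_n$-invariance), and part (ii) is likewise a direct coordinate computation: substitute $3a_{(0,1,1)}=a'+a_{(0,0,1)}^2$ and $27a_{(1,1,1)}=3b_{(1,1,1)}a'+a_{(0,0,1)}(a'+a_{(0,0,1)}^2)$ into \eqref{eq:lineCondition}, factor out $(a')^2$, and read off the equation $-4a'-(3b_{(1,1,1)}-2a_{(0,0,1)})^2=0$ of $L^{\ell}_1$ in the chart.

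However, the success criterion you set for that final computation is stated incorrectly in two places, and following it literally would derail the verification. First, the tangency of $L_1$ to $E_1$ is caused precisely by the projectivized tangent cone of $L$ along $B_0$ being \emph{non-reduced} (a double hyperplane, by Lemma~\ref{lem:lemma0.1}(ii)--(iii) together: multiplicity $2$ with support a single hyperplane); your parenthetical invoking the classical statement for a ``reduced projectivized tangent cone'' describes the opposite situation, in which $L_1$ would meet $E_1$ transversally rather than tangentially. Second, you propose to ``verify that the linear term in $a'$ of the proper transform vanishes identically along $B_1$.'' It does not, and it must not: in the displayed equation the linear term is $-4a'$, and its nonvanishing is exactly what makes $L_1$ smooth there. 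What has to be checked is the complementary statement: the part of the equation of $L_1$ \emph{not} involving $a'$ (here $-(3b_{(1,1,1)}-2a_{(0,0,1)})^2$) is a perfect square, hence has identically vanishing differential along its own zero locus, so that $dF=-4\,da'$ at every point of $L_1\cap E_1$ and therefore $T L_1=\ker(da')=TE_1$ there. This is consistent with the correct equivalence you state earlier in the same paragraph ($a'$ vanishing to order $\geq 2$ on $L_1$), so the slip looks local; with the criterion corrected, the computation you outline is the one in the paper and goes through.
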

	
	\begin{proof}
		Since $\phi_1$ is an isomorphism, we have $\phi_1(e) = S_1 \cap E_1$ and it suffices to show $\phi_1(e) \subseteq B_1$. By invariance under projective transformations, it suffices to see this for the fiber in $E_1$ over $[x_0^3] \in B_0$. Using the coordinates described above, the intersection of this fiber with $B_1$ in $V_1$ is described by the equations  $a_{(0,0,i)}=0$ for all $1 \leq i \leq n$, $a'=0$ and $b_J=0$ for every multi-index $J$ whose first entry is non-zero. The explicit description of $\phi_1$ shows that the image of the fiber of $([1:0:\cdots:0],[1:0:\cdots:0]) \in \PP^n \times \PP^n$ satisfies all these equations, proving the claim.\\
		For the second point, the invariance under the natural action of $\PGL_n$ on $V_1$ allows us to verify the claim for the line-condition corresponding to the line $\ell = \mathcal{V}(x_2, \ldots, x_n)$.
		We can restrict to the affine open $D(a_{(0,0,0)}) \cap D(b_{(0,1,1)})$ where we have local coordinates (see Remark~\ref{eq:equations_V1}). The equation for the line condition $L^{\ell}$ in $D(a_{(0,0,0)}) \subseteq V_0$ is given in (\ref{eq:lineCondition}).
		Plugging in $3a_{(0,1,1)}=a'+a_{(0,0,1)}^2$ and $27a_{(1,1,1)}=3b_{(1,1,1)}a'+a_{(0,0,1)}(a'+a_{(0,0,1)}^2)$, we get the equation 
		\[(a')^2(12b_{(1,1,1)}a_{(0,0,1)}-4a_{(0,0,1)}^2-4a'-9b_{(1,1,1)}^2)=0,\]
		which outside of $E_1$ describes the proper transform $L^{\ell}_1$ of the line condition in the chosen affine chart, whose equation is therefore $-4a'-(3b_{(1,1,1)}-2a_{(0,0,1)})^2=0$. Since the equation of $E_1$ in the local coordinates is $a'=0$, every point of $E_1$ belonging to the proper transform is indeed a tangency point. Moreover, the equation shows that the proper transform is smooth in this entire affine open.
	\end{proof}
	
	\begin{lemma}\label{lemma:equations_B1}
		The ideal of $B_1 \subseteq V_1$ in the open $D(a_{(0,0,0)})$ is generated by the equations
		\begin{alignat*}{2}
		f_J &= 0 &&\;\;\text{ for all } J, \\
		f_{(i,i,i)}' &:= 3b_{(i,i,i)} - 2a_{(0,0,i)} b_{(0,i,i)} = 0 &&\;\;\text{ for all } i>0, \\
		f_{(i,i,j)}' &:= 3b_{(i,i,j)} - a_{(0,0,i)} b_{(0,i,j)} = 0  &&\;\;\text{ for all } i,j>0, \ i \neq j, \\
	    f_{(i,j,k)}' &:= 3b_{(i,j,k)} - a_{(0,0,i)} b_{(0,j,k)} - a_{(0,0,j)} b_{(0,i,k)} = 0  &&\;\;\text{ for all } k>j>i>0.
		\end{alignat*}
	\end{lemma}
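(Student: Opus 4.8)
The plan is to compute $\mathcal{I}(B_1)$ chart by chart over $D(a_{(0,0,0)})$, using the local description of $V_1$ from Coordinates~I together with the identification, obtained in the proof of the proposition above, of the fibre of $B_1$ over a triple hyperplane $[\lambda^3]\in B_0$ with the set of cubics divisible by $\lambda$ inside $\PP(N_{[\lambda^3]}V_0)$. As a preliminary step, note that the first group of equations, $f_J=0$ for all multi-indices $J$, just cuts out the exceptional divisor $E_1$: over $D(a_{(0,0,0)})$ the ideal generated by all the $f_J$ is $\mathcal{I}(B_0)\,\mathcal{O}_{V_1}=\mathcal{O}_{V_1}(-E_1)$, and in the chart $D(b_{(0,1,1)})$ this equals $(a')$, the reduced ideal of $E_1$ there; since $E_1$ is an integral divisor, this is its full ideal. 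So what remains is to show that, inside $E_1$, the variety $B_1$ is cut out by the linear-in-$b$ forms $f'_{(i,i,i)},f'_{(i,i,j)},f'_{(i,j,k)}$, and that these generate the full (reduced) ideal.

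Now fix the chart $D(a_{(0,0,0)})\cap D(b_{(0,1,1)})$ with coordinates $(a_{(0,0,i)},a',b_J)$, where $E_1=\{a'=0\}$, and a point $p=(a^{\circ}_{(0,0,i)},0,b^{\circ}_J)\in E_1$, lying over $[\lambda^3]$ with $\lambda=x_0+\tfrac13\sum_{i>0}a^{\circ}_{(0,0,i)}x_i$. The claim I would establish is that, under $E_1\cong\PP(N_{B_0}V_0)$, the point $p$ corresponds to the normal direction $[g]\in\PP(N_{[\lambda^3]}V_0)=\PP(\Sym^3(W)/\lambda^2 W)$, where $g=\sum_I g_I x^I$ has coefficients $g_I=\partial_{a'}a_I\big|_{a'=0}$, the $a_I$ ($I\neq(0,0,0)$) being read off as polynomials in the chart coordinates from the relations $f_{(0,1,1)}=a'$, $f_J=b_Ja'$ of Coordinates~I. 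The reason is that the curve $t\mapsto(a^{\circ}_{(0,0,i)},t,b^{\circ}_J)$ in $V_1$ projects to the curve of cubics with coefficients $a_I(t)$, which meets $B_0$ at $t=0$ with velocity $g$, while $f_J(t)=b^{\circ}_Jt$ makes the Plücker-type ratios converge to $[b^{\circ}_J]$; this is precisely the datum $\bigl[g\bmod T_{[\lambda^3]}B_0\bigr]$ recorded by $E_1=\PP(N_{B_0}V_0)$. Since $a_{(0,0,0)},a_{(0,0,i)}$ do not depend on $a'$, the cubic $g$ has vanishing $x_0^3$- and $x_0^2x_i$-coefficients, hence is the distinguished representative modulo $T_{[\lambda^3]}B_0=\lambda^2W$. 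Expanding the relations, the $x_0x_jx_k$-coefficients of $g$ are $\tfrac13 b_{(0,j,k)}$, and the $x_ix_jx_k$-coefficients with all indices positive are $\tfrac19 b_{(i,i,i)}+\tfrac1{27}a_{(0,0,i)}b_{(0,i,i)}$, $\tfrac13 b_{(i,i,j)}+\tfrac19 a_{(0,0,j)}b_{(0,i,i)}$, and $\tfrac13 b_{(i,j,k)}+\tfrac19 a_{(0,0,k)}b_{(0,i,j)}$.

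By the cited description, $p\in B_1$ exactly when $\lambda\mid g$, say $g=\lambda q$. Comparing $x_0^3$- and $x_0^2x_i$-coefficients forces $q\in\mathbbl{k}[x_1,\dots,x_n]_2$; comparing $x_0x_jx_k$-coefficients then gives $q=\tfrac13\sum_{0<j\le k}b_{(0,j,k)}x_jx_k$; and comparing the remaining, all-positive-index, $x_ix_jx_k$-coefficients yields exactly $f'_{(i,i,i)}=f'_{(i,i,j)}=f'_{(i,j,k)}=0$. Conversely, if this system holds then the $q$ just displayed satisfies $g=\lambda q$. Thus $B_1\cap(\text{chart})=\{a'=0\}\cap\{f'_J=0\}$ as sets. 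Since each equation $f'_J=0$ ($J$ with all indices positive) solves for the coordinate $b_J$ in terms of the $a_{(0,0,i)}$ and the $b_{(0,j,k)}$, and these exhaust all $b$-coordinates, $\{a'=0\}\cap\{f'_J=0\}$ is the graph of a morphism, hence smooth with ideal precisely $\bigl(a',\ f'_J:J\text{ all-positive}\bigr)$. Translating back, $\mathcal{I}(B_1)=\bigl(f_J:\text{all }J\bigr)+\bigl(f'_J:J\text{ all-positive}\bigr)$ in this chart; running the identical computation in every chart $D(b_{J_0})$, or invoking $\PGL$-equivariance, gives the assertion over all of $D(a_{(0,0,0)})$.

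The one genuinely delicate point is the middle step: unwinding the chart relations of Coordinates~I to produce the explicit coefficients of $g$, and checking that the homogeneous coordinates $b_J$ really are the coordinates of the normal direction $[g]$ in the $(f_J)$-trivialization of $N_{B_0}V_0$ — equivalently, that $df_J(g)=b_J$ for every $J$, which is where the precise shape of the $f_J$ and of the exceptional equations comes in. Everything downstream is a short comparison of polynomial coefficients, and the smoothness and "full ideal" claims are immediate from the triangular form of the $f'_J$.
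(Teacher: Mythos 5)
Your proposal is correct and follows essentially the same route as the paper: identify a point of the fiber $E_1|_{[\lambda^3]}$ with a normalized cubic in $\Sym^3(W)/\lambda^2 W$ via the blow-up coordinates $b_J$ (you do this by a curve/velocity computation, the paper via the conormal sequence and the dual of $I/I^2$ --- the same identification), then translate divisibility by $\lambda$ into the equations $f'_J=0$. Your explicit verification that the triangular shape of the $f'_J$ makes the locus a graph, hence reduced with the stated ideal, is a welcome spelling-out of the paper's ``the claim can be deduced directly from this,'' and your remark that the other charts $D(b_{J_0})$ must be handled likewise (or by equivariance) is the right caveat.
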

	
	These equations clearly form a regular sequence, so $B_1$ is a complete intersection in the open chart. In the affine chart  $D(b_{(0,1,1)})$, we can moreover replace the first set of conditions by $a'=0$, as above.
	
	\begin{proof}
		From the commutative diagram in the proof of Lemma~\ref{lemma:embedding_normal_bundles}, the fiber over $[\lambda^3] \in B_0$ of the normal bundle can be naturally identified with the vector space $\Sym^3(W)/\langle \lambda^2 x_0, \ldots, \lambda^2 x_n \rangle$. We want to understand how an element in the latter corresponds to an element of the fiber $E_1|_{[\lambda^3]}$ if explicitly written in the coordinates from the description of $V_1$ in (\ref{eq:equations_V1}). The answer is provided by the conormal sequence
		\begin{equation*}
		0 \rightarrow I/I^2 \rightarrow \Omega_{\mathbbl{k}[a_I]} \otimes_{\mathbbl{k}[a_I]} \mathbbl{k}[a_I]/I \rightarrow \Omega_{\mathbbl{k}[a_I]/I} \rightarrow 0.
		\end{equation*}
		Any point $k \in  \Sym^3(W)/\langle \lambda^2 x_0, \ldots, \lambda^2 x_n \rangle$ can be uniquely represented  as a cubic not containing the monomials $x_0^3, x_0^2 x_1, \ldots, x_0^2 x_n$. If we write $k = k_{(0,1,1)} x_0 x_1^2 + \ldots + k_{(n,n,n)} x_n^3$, then $k$ corresponds to the element $\sum_J y_J \overline{f_J}$ in  $\left( I/I^2 \otimes \mathbbl{k}[a_I]/\mathfrak{m}_{[\lambda^3]} \right)^{\vee}$ via
		\begin{alignat*}{2}
		b_{(i,j,k)} &= 3 k_{(i,j,k)} - a_{(0,0,k)} k_{(0,i,j)} &&\hspace{3cm} \qquad \text{ for all } k>j>i>0, \\
		b_{(i,i,j)} &= 3 k_{(i,i,j)} - a_{(0,0,j)} k_{(0,i,i)} &&\hspace{3cm} \qquad\text{ for all } i,j>0, i \neq j, \\
		b_{(i,i,i)} &= 9 k_{(i,i,i)} - a_{(0,0,i)} 
		k_{(0,i,i)} &&\hspace{3cm} \qquad\text{ for all } i>0, \\
		b_{(0,i,j)} &= 3 k_{(0,i,j)} &&\hspace{3cm} \qquad\text{ for all } j \geq i > 0.
		\end{alignat*}
		In the fiber over $[\lambda^3]\in B_0$ we have $a_{(0,0,i)} = 3\lambda_i$ and  it is easy to see that the cubic $k$ is divisible by $\lambda = x_0 + \lambda_1 x_1 + \ldots + \lambda_n x_n$ if and only if $k$ satisfies the equations
		\begin{alignat*}{2}
		3 k_{(i,j,k)} &= a_{(0,0,i)} k_{(0,j,k)} + a_{(0,0,j)} k_{(0,i,k)} + a_{(0,0,k)} k_{(0,i,j)} &&\;\; \text{ for all } k>j>i>0, \\
		3 k_{(i,i,j)} &= a_{(0,0,j)} k_{(0,i,i)} + a_{(0,0,i)} k_{(0,i,j)} && \;\;\text{ for all } i,j>0, i\neq j, \\
		3 k_{(i,i,i)} &= a_{(0,0,i)} k_{(0,i,i)} &&\;\; \text{ for all } i>0.
		\end{alignat*}
		The claim can be deduced directly from this.
	\end{proof}

	\subsection{Second Blow-up}\label{subsec:twoblow} Let $V_2 := \Bl_{B_1} V_1$. This is smooth because so is $B_1$. We denote $\pi_2:V_2\to V_1$ the blow-up map, and respectively $\Tilde{E}_1,S_2,P_2,L_2$ the proper transforms of $E_1,S_1,P_1,L_1$. Moreover, we define $B_2 \coloneqq \Tilde{E}_1 \cap E_2 = \PP(N_{B_1}E_1)$, where $E_2$ denotes the exceptional divisor in $V_2$.
	
	\begin{crd2}\label{rmk:coord_V2}
	Let $(a_{(0,0,1)}, \ldots, a_{(0,0,n)}, a', b_J ,[c_a,c_H])$ denote coordinates for the product space $(D(a_{(0,0,0)}) \cap D(b_{(0,1,1)})) \times \PP^{r-1}$, where $r$ is the codimension of $B_1$ as subvariety of $V_0$ and $J$ denotes all multi-indices $(i,j,k)\in[n]^3$ with $i \leq j \leq k$ such that $i$ and $j$ are not both equal to $0$. Thanks to Lemma~\ref{lemma:equations_B1},  the blow-up $V_2$ in the open chart $D(a_{(0,0,0)}) \cap D(b_{(0,1,1)})$ is a closed subvariety given by the equations
    \begin{equation*}
        c_a f_H' - a' c_H = 0, \qquad
        c_{H_1} f_{H_2}' - c_{H_2} f_{H_1}' = 0,
    \end{equation*}
    for $H, H_1, H_2$ running over all $(i,j,k)$ with $k \geq j \geq i \geq 1$. We can choose the affine open of $V_2$ given by $D(c_{(1,1,1)})$, then these equations simplify to
    \begin{equation*}
        c_a f_{(1,1,1)}' - a' =0, \qquad c_H f_{(1,1,1)}' - f_H'=0,
    \end{equation*}
    where $H$ varies as above but we exclude $H = (1,1,1)$. Introducing the new variable $b' \coloneqq f_{(1,1,1)}'$, essentially carrying the same information as $b_{(1,1,1)}$, this affine open of $V_2$ has affine coordinates $(a_{(0,0,i)}, b_{(0,j,k)},b',c_a, c_H)$ with $H \neq (1,1,1)$ subject to no relations. In these coordinates, the equation for $E_2$ in $V_2$ becomes $b' = 0$ and the equation for the proper transform $\Tilde{E_1}$ becomes $c_a = 0$.
    We will always exclude the index $H=(1,1,1)$ when considering the affine coordinates.
    \end{crd2}

	\begin{lemma}\label{lem:iden}
    Write $N_2 \coloneqq N_{\PP(W)} \PP(\Sym^2(W))$ and $N_3 \coloneqq N_{\PP(W)} \PP(\Sym^3(W))$ and let $p_1: B_1 \rightarrow B_0$ be the restriction of the canonical map from the projective bundle $E_1 = \PP(N_{B_0} V_0)$ to its base $B_0 \cong \PP(W)$. Then there is a natural isomorphism
    \begin{align*}
        N_{B_1}E_1 &= p_1^\ast(N_3/N_2) \otimes_{\mathcal{O}_{B_1}}  \mathcal{O}_{B_1}(1) \\ &= p_1^\ast \left(\frac{\Sym^3(W) \otimes \mathcal{O}_{\PP(W)}(3)}{\Sym^2(W) \otimes \mathcal{O}_{\PP(W)}(2)} \right) \otimes  \mathcal{O}_{B_1}(1) \\ &= p_1^\ast(\Sym^3(T\PP(W))) \otimes \mathcal{O}_{B_1}(1).
    \end{align*}
    Hence, over a point $(\lambda, q) \in B_1$, the normal space $N_{B_1} {E_1}_{|_{(\lambda,q)}}$ is naturally identified with $\Sym^3(W)/(\lambda \cdot \Sym^2(W))$. Points in $B_2$ can be thought of as triples consisting of a hyperplane $\lambda$ together with a quadric $q$ and a cubic $c$ inside $\lambda$.
    \end{lemma}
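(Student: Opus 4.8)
The plan is to compute the normal bundle $N_{B_1}E_1$ by combining two structural facts: first, that $E_1 = \PP(N_{B_0}V_0) = \PP(N_3)$ is a projective bundle over $B_0 \cong \PP(W)$, and second, that $B_1 = \PP(N_2) \hookrightarrow \PP(N_3) = E_1$ is the subbundle cut out by the embedding $\alpha_{2,3}$ of Lemma~\ref{lemma:embedding_normal_bundles}. For a projective subbundle $\PP(\mathcal{F}) \subseteq \PP(\mathcal{G})$ arising from a subbundle inclusion $\mathcal{F} \hookrightarrow \mathcal{G}$ of vector bundles on a base $B_0$, there is a standard identification of the relative normal bundle: restricting the relative Euler sequences of $\PP(\mathcal{G})$ and $\PP(\mathcal{F})$ and taking quotients gives
\begin{equation*}
N_{\PP(\mathcal{F})/B_0}\bigl(\PP(\mathcal{G})\bigr) \;\cong\; p^{\ast}(\mathcal{G}/\mathcal{F}) \otimes \mathcal{O}_{\PP(\mathcal{F})}(1),
\end{equation*}
where $p\colon \PP(\mathcal{F}) \to B_0$ is the projection and $\mathcal{O}_{\PP(\mathcal{F})}(1)$ is the relative hyperplane bundle. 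Since $E_1 \to B_0$ is smooth, the normal bundle of $B_1$ inside $E_1$ coincides with the relative normal bundle, so applying this with $\mathcal{F} = N_2$, $\mathcal{G} = N_3$ and $p = p_1$ yields the first displayed isomorphism $N_{B_1}E_1 = p_1^{\ast}(N_3/N_2) \otimes \mathcal{O}_{B_1}(1)$.

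Next I would unwind the quotient $N_3/N_2$ using the commutative diagram of Euler sequences already established in the proof of Lemma~\ref{lemma:embedding_normal_bundles}. There the middle column is precisely $\alpha_{2,3}\colon \Sym^2(W)\otimes\mathcal{O}_{\PP(W)}(2) \hookrightarrow \Sym^3(W)\otimes\mathcal{O}_{\PP(W)}(3)$, both sequences share the same kernel $\mathcal{O}_{\PP(W)}$, and the induced map on cokernels is $\overline{\alpha_{2,3}}\colon T\PP(\Sym^2(W))|_{\PP(W)} \to T\PP(\Sym^3(W))|_{\PP(W)}$, i.e.\ the inclusion $N_2 \hookrightarrow N_3$ of normal bundles. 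By the snake lemma applied to that diagram, the cokernel of the map on tangent bundles equals the cokernel of the map on the middle terms; hence
\begin{equation*}
N_3/N_2 \;\cong\; \frac{\Sym^3(W)\otimes\mathcal{O}_{\PP(W)}(3)}{\Sym^2(W)\otimes\mathcal{O}_{\PP(W)}(2)},
\end{equation*}
which is the second displayed equality. For the third, I would note that $\Sym^e(W)\otimes\mathcal{O}_{\PP(W)}(e)$ is naturally $\Sym^e$ of $W\otimes\mathcal{O}_{\PP(W)}(1)$, filter the symmetric power of the Euler sequence $0 \to \mathcal{O} \to W\otimes\mathcal{O}(1) \to T\PP(W) \to 0$, and observe that the quotient of $\Sym^3$ by the image of $\Sym^2$ (via multiplication by the Euler section, which is exactly $\alpha_{2,3}$ in the fiber) is $\Sym^3(T\PP(W))$ — this is the familiar fact that the associated graded of the $\lambda$-adic filtration on $\Sym^{\bullet}$ recovers $\Sym^{\bullet}$ of the quotient. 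Combining the three gives $N_{B_1}E_1 = p_1^{\ast}(\Sym^3 T\PP(W))\otimes\mathcal{O}_{B_1}(1)$.

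Finally, for the fiberwise statement: over $(\lambda,q)\in B_1$ the fiber of $p_1^{\ast}(N_3/N_2)$ is $\Sym^3(W)/(\lambda\cdot\Sym^2(W))$ by the second description, and the twist by $\mathcal{O}_{B_1}(1)$ only rescales, so as a vector space the normal space is canonically $\Sym^3(W)/(\lambda\cdot\Sym^2(W))$; a class there is represented by a cubic, and two cubics agree modulo $\lambda\cdot\Sym^2(W)$ precisely when their difference is divisible by $\lambda$, i.e.\ when they cut out the same cubic hypersurface \emph{inside the hyperplane} $\mathcal{V}(\lambda)$. Recalling from Coordinates~II that a point of $B_2 = \PP(N_{B_1}E_1)$ over $(\lambda,q)$ is a line in this normal space, it is thus the data of a cubic $c$ modulo $\lambda$, i.e.\ a cubic inside the hyperplane $\lambda$; together with the $\lambda$ and $q$ already recorded by the point of $B_1$, this is the claimed triple. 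The main obstacle I anticipate is not any single step but making the naturality precise — in particular checking that the abstract projective-bundle normal-bundle formula is compatible with the explicit chart coordinates $(a_{(0,0,i)}, b_{(0,j,k)}, b', c_a, c_H)$ of Coordinates~II and with the identification of $E_1|_{[\lambda^3]}$ with $\Sym^3(W)/\langle\lambda^2 x_0,\dots,\lambda^2 x_n\rangle$ from Lemma~\ref{lemma:equations_B1}, so that the phrase "naturally identified" is justified rather than merely an isomorphism of bundles with the right Chern classes.
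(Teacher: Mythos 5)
Your proof is correct and follows essentially the same route as the paper: the paper obtains the first isomorphism by citing the projective-subbundle normal-bundle formula (Eisenbud--Harris, Proposition~9.13, which is exactly the statement $N_{\PP(\mathcal{F})}\PP(\mathcal{G}) \cong p^{\ast}(\mathcal{G}/\mathcal{F})\otimes\mathcal{O}_{\PP(\mathcal{F})}(1)$ you derive from the relative Euler sequences) and the second and third equalities from the Euler sequences for $T\PP(W)$, $T\PP(\Sym^2(W))$ and $T\PP(\Sym^3(W))$, which is precisely what you spell out via the snake lemma and the filtration of $\Sym^3$ of the Euler sequence. The additional detail you supply, including the fiberwise identification and the interpretation of points of $B_2$, is sound.
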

    
    \begin{proof}
    The first isomorphism is given by \cite[Proposition 9.13]{eisenbud_harris_2016}.
    The Euler sequences for $T\PP(W)$, $T\PP(\Sym^2(W))$, $T\PP(\Sym^3(W))$ then give the second and third equality.
    \end{proof}
    
    \begin{lemma} The set-theoretical intersection of all proper transforms of the line conditions in $V_2$ is contained in the union of $S_2$ and the smooth variety $B_2 = \Tilde{E}_1 \cap E_2$.
    \end{lemma}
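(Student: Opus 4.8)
I would mimic the proof of the analogous statement for $V_1$ in Section~\ref{subsec:oneblow}, replacing the uniform description of the tangent cones of the line conditions along $B_0$ (Lemma~\ref{lem:lemma0.1}(iii)) by the tangency information along $B_1$ recorded in Lemma~\ref{lem:tangency}. As a first step I reduce to the exceptional divisor $E_2$ of $\pi_2$: since $\pi_2$ restricts to an isomorphism over $V_1\setminus B_1$ and the intersection of all line conditions in $V_1$ is contained in $S_1\cup B_1$ by the Proposition in Section~\ref{subsec:oneblow}, the locus $\bigcap_{\ell}L^{\ell}_2\setminus E_2$ is carried by $\pi_2$ into $\bigl(\bigcap_{\ell}L^{\ell}_1\bigr)\setminus B_1\subseteq(S_1\cup B_1)\setminus B_1=S_1\setminus B_1$, hence it lies in $S_2$. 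So it suffices to prove $\bigcap_{\ell}L^{\ell}_2\cap E_2\subseteq B_2$.

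I would establish this fibrewise over $B_1$. Fix $b\in B_1$. Since $B_1$ is an irreducible component of $\bigcap_{\ell}L^{\ell}_1$, we have $B_1\subseteq L^{\ell}_1$ for every line $\ell$; and $B_1\subseteq E_1$, so $T_bB_1$ lies in both $T_bL^{\ell}_1$ and $T_bE_1$. Whenever $L^{\ell}_1$ is smooth at $b$, the proper transform $L^{\ell}_2=\Bl_{B_1}L^{\ell}_1$ meets the fibre $(E_2)_b=\PP(N_{B_1}V_1|_b)=\PP(T_bV_1/T_bB_1)$ in the hyperplane $\PP(T_bL^{\ell}_1/T_bB_1)$; and since $E_1$ is smooth, $\Tilde{E}_1\cap E_2=\PP(N_{B_1}E_1)$, so $(B_2)_b$ is the hyperplane $\PP(T_bE_1/T_bB_1)$ of $(E_2)_b$. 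Granting that for each $b$ there is a line $\ell=\ell(b)$ with $L^{\ell}_1$ smooth at $b$ and tangent to $E_1$ there, i.e.\ $T_bL^{\ell}_1=T_bE_1$, we conclude that the fibre of $L^{\ell}_2\cap E_2$ over $b$ equals $(B_2)_b$, and hence
\[
\Bigl(\bigcap_{\ell'}L^{\ell'}_2\cap E_2\Bigr)_b\ \subseteq\ \bigl(L^{\ell}_2\cap E_2\bigr)_b\ =\ (B_2)_b.
\]
As $b$ was arbitrary, this yields $\bigcap_{\ell}L^{\ell}_2\cap E_2\subseteq B_2$, and combined with the first step $\bigcap_{\ell}L^{\ell}_2\subseteq S_2\cup B_2$.

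It remains to produce the line $\ell(b)$ for each $b\in B_1$, which I would do via $\PGL_n$-invariance. For $\ell_0=\mathcal{V}(x_2,\ldots,x_n)$ the explicit local computation in the proof of Lemma~\ref{lem:tangency} shows that $L^{\ell_0}_1$ is smooth and tangent to $E_1$ at every point of $B_1$ lying in the chart $D(a_{(0,0,0)})\cap D(b_{(0,1,1)})$. Thus it is enough to verify that every $\PGL_n$-orbit in $B_1$ meets $B_1\cap D(a_{(0,0,0)})\cap D(b_{(0,1,1)})$: since $\PGL_n$ acts transitively on $B_0=\PP(W)$ and the stabiliser of a point of $B_0$ acts on the corresponding fibre of $B_1\cong\PP(N_{\PP(W)}\PP(\Sym^2(W)))$ — the projective space of quadratic forms in the $n$ residual variables — with orbits indexed by the rank, one only needs that the chart contains a representative of each rank, e.g.\ the class of $x_1^2+\ldots+x_r^2$ for $1\le r\le n$. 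Transporting $\ell_0$ by an element of $\PGL_n$ moving $b$ into the chart then gives $\ell(b)$.

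The geometric heart of the argument is Lemma~\ref{lem:tangency}; the step I expect to cost the most care is the passage from the \emph{generic} tangency of the line conditions along $B_1$ provided by that lemma to the fibrewise statement at \emph{every} point of $B_1$, which is exactly what forces the $\PGL_n$-orbit bookkeeping. A smaller point to handle carefully is the identification $\Tilde{E}_1\cap E_2=\PP(N_{B_1}E_1)$, which relies on $E_1$ being smooth and ensures that $(B_2)_b$ is precisely the hyperplane $\PP(T_bE_1/T_bB_1)$ rather than some proper subvariety of it.
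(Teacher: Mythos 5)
Your proof is correct and follows essentially the same route as the paper: the key point in both is that Lemma~\ref{lem:tangency} makes the line conditions tangent to $E_1$ along $B_1$, which confines $\bigcap_\ell L_2^\ell\cap E_2$ to $\Tilde{E}_1\cap E_2=B_2$. You are in fact more careful than the paper's two-sentence argument, since you upgrade the \emph{generic} smoothness and tangency of Lemma~\ref{lem:tangency} to a pointwise statement at every $b\in B_1$ via the explicit chart computation and the $\PGL$-orbit/rank-stratification bookkeeping — a step the paper leaves implicit.
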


    \begin{proof}
    The variety $S_2$ is clearly a component of the intersection. By Lemma~\ref{lem:tangency}, the line conditions in $V_1$ are generically tangent to $E_1$, and therefore the tangent space of each line condition is contained in the tangent space of $E_1$. Hence, the intersection of the proper transforms of the line conditions with the exceptional divisor $E_2$ is contained in $\Tilde{E_1}$.
    \end{proof}
    
    A similar reasoning as in Lemma~\ref{lemma:liftofphi0} shows also the following.
     \begin{lemma}\label{lemma:phi2}
The lift $\phi_2: \Bl_{\Delta}\PP^n\times\PP^n \to V_2$ of $\phi_1$ is explicitly given by
\begin{equation*}
    \begin{split}
    a_{(0,0,1)}&=3\mu_1 + u_1,\\
    a_{(0,0,i)}&=3\mu_i + s_i u_1 \quad \hbox{ for }i>1,\\
    b_{(0,i,i)}&=s_i^2 \qquad \qquad \;\,  \hbox{ for }i>1,\\
    b_{(0,1,i)}&=2s_i \qquad \qquad \, \hbox{ for }i>1,\\
    b_{(0,i,j)}&=2s_i s_j \qquad \quad \;\hbox{ for }j>i>1,\\
    b'&=-2u_1,
    \end{split}\qquad \qquad \qquad
    \begin{split}
    c_a&=u_1 /\,2, \\
    c_{(1,1,i)} &= s_i \qquad\;\, \hbox{ for }i>1,\\
    c_{(1,i,i)} &= s_i^2 \qquad \,\, \hbox{ for }i>1,\\
    c_{(i,i,j)} &= s_i^2 s_j  \;\;\quad\hbox{ for }i, j> 1,\\
    c_{(i,i,i)} &= s_i^3\qquad \;\, \hbox{ for }i\neq 0,1,\\
    c_{(i,j,k)} &= 2 s_i s_j s_k \;\hbox{ for }k>j>i>0.
    \end{split}
\end{equation*}
\end{lemma}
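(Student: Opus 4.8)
The plan is to reduce the statement to the explicit description of $\phi_1$ in Lemma~\ref{lemma:liftofphi0}, the generators of $\mathcal{I}(B_1)$ from Lemma~\ref{lemma:equations_B1}, and the coordinates on $V_2$ described in Coordinates~II. First I would record that the lift $\phi_2$ exists and satisfies $\pi_2\circ\phi_2=\phi_1$: by the universal property of blowing up it is enough to see that $\phi_1^{\ast}\mathcal{I}(B_1)$ is an invertible ideal sheaf on $\Bl_{\Delta}\PP^n\times\PP^n$, and this goes exactly as in the proof of Lemma~\ref{lemma:liftofphi0}. Concretely, in the chart $D(a_{(0,0,0)})\cap D(b_{(0,1,1)})$ the ideal $\mathcal{I}(B_1)$ is generated by $a'$ together with the $f'_H$ for all multi-indices $H=(i,j,k)$ with $k\ge j\ge i\ge 1$; composing with the formulas of Lemma~\ref{lemma:liftofphi0} gives $f'_{(1,1,1)}\circ\phi_1=3(2\mu_1)-2(3\mu_1+u_1)=-2u_1$ and $a'\circ\phi_1=-u_1^2$, while (see below) each remaining $f'_H\circ\phi_1$ is a multiple of $u_1$; hence $\phi_1^{\ast}\mathcal{I}(B_1)=(u_1)$ is the ideal of the exceptional divisor $e$.

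Since $\pi_2\circ\phi_2=\phi_1$ and, on the affine chart $D(c_{(1,1,1)})$ of $V_2$, the map $\pi_2$ merely forgets the new coordinates $b',c_a,c_H$, the components $a_{(0,0,i)}$ and $b_{(0,j,k)}$ of $\phi_2$ coincide with those of $\phi_1$; this already gives every formula in the statement except those for $b'$, $c_a$ and the $c_H$. For the latter, by $\PGL_n$-invariance (as in Lemma~\ref{lemma:liftofphi0}) it suffices to work in the chart $D(a_{(0,0,0)})\cap D(b_{(0,1,1)})\cap D(c_{(1,1,1)})$ of $V_2$ over the patch $D(s_1)$ in the fibre of $([1:0:\cdots:0],[1:0:\cdots:0])$, with coordinates $(\mu_1,\dots,\mu_n,u_1,s_2,\dots,s_n)$ and the convention $s_1=1$. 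By Coordinates~II one has there $b'=f'_{(1,1,1)}$, $a'=c_a b'$ and $f'_H=c_H b'$ for $H\neq(1,1,1)$. Since $\phi_1^{\ast}\mathcal{I}(B_1)=(u_1)$, on the dense open locus $\{u_1\neq 0\}$ the map $\phi_1$ lands in $V_1\setminus B_1$, where $\pi_2$ is an isomorphism, so on that locus
\[
b'\circ\phi_2=f'_{(1,1,1)}\circ\phi_1,\qquad c_a\circ\phi_2=\frac{a'\circ\phi_1}{f'_{(1,1,1)}\circ\phi_1},\qquad c_H\circ\phi_2=\frac{f'_H\circ\phi_1}{f'_{(1,1,1)}\circ\phi_1}.
\]

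It then remains to substitute the formulas of Lemma~\ref{lemma:liftofphi0} and the definitions of the $f'_H$ from Lemma~\ref{lemma:equations_B1} and to simplify each quotient. For example $b'\circ\phi_2=-2u_1$ and $c_a\circ\phi_2=(-u_1^2)/(-2u_1)=u_1/2$, and $f'_{(1,1,i)}\circ\phi_1=3(2\mu_1 s_i)-(3\mu_1+u_1)(2s_i)=-2u_1 s_i$, so $c_{(1,1,i)}\circ\phi_2=s_i$; in the same way each remaining $f'_H\circ\phi_1$ comes out equal to $-2u_1$ times the monomial in the $\mu_i,s_i$ claimed in the statement, the $u_1$-free parts cancelling in pairs. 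Consequently all these quotients extend from $\{u_1\neq 0\}$ to the polynomial functions listed; since $V_2$ is separated and $\{u_1\neq 0\}$ is dense, these polynomials compute $\phi_2$ on the whole chart, and $\PGL_n$-equivariance then gives the general case. The main---and essentially only---obstacle is the bookkeeping in this last step: one must keep careful track of which entry of a multi-index plays the role of the repeated index in the definition of $f'_H$ and of the normalisation $s_1=1$, so that each ratio collapses to a single monomial; beyond that, no conceptual difficulty is involved, which is what is meant by the ``similar reasoning'' preceding the lemma.
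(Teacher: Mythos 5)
Your proposal is correct and is precisely the ``similar reasoning'' the paper invokes (the paper gives no written proof of this lemma, only the remark that it follows as in Lemma~\ref{lemma:liftofphi0}): lift via the universal property by checking $\phi_1^{\ast}\mathcal{I}(B_1)=(u_1)$, then compute $b'$, $c_a$ and the $c_H$ as ratios $f'_H\circ\phi_1\,/\,f'_{(1,1,1)}\circ\phi_1$ on $\{u_1\neq 0\}$ and extend by density. I verified the representative computations (e.g.\ $f'_{(1,1,1)}\circ\phi_1=-2u_1$, $f'_{(1,i,i)}\circ\phi_1=-2u_1s_i^2$, $f'_{(i,j,k)}\circ\phi_1=-4u_1s_is_js_k$) and they all reproduce the stated formulas.
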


    Lemma \ref{lem:tangency} implies that the set-theoretic intersection of $S_1$ with $B_1$ is given by $\phi_1(e)$. It is not hard to see then that $S_2$ is isomorphic to $S_1$, hence to $\Bl_{\Delta}\PP^n\times\PP^n$.
    Abusing notation, we will indicate with $e$ the exceptional divisor of $\Bl_{\Delta}\PP^n\times\PP^n$ as well as all its isomorphic images under the maps $\phi_i$.
    
    \begin{lemma}\label{lemma:S2intB2} 
    The following hold:
    \begin{enumerate} 
        \item  $B_2$ intersects $S_2$ along $e$.
        \item  The line conditions in $V_2$ are generically smooth along $B_2$.
    \end{enumerate}
    \end{lemma}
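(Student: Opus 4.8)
The plan is to prove both parts by an explicit computation in the affine chart $D(a_{(0,0,0)}) \cap D(b_{(0,1,1)}) \cap D(c_{(1,1,1)})$ of $V_2$, reducing to this single chart via the $\PGL_n$-invariance of all the loci involved, exactly as in the proofs of Lemma~\ref{lem:tangency} and Lemma~\ref{lemma:phi2}. On this chart I use the coordinates of Coordinates~II, in which $E_2 = \{b' = 0\}$, $\tilde{E}_1 = \{c_a = 0\}$ and hence $B_2 = \{b' = c_a = 0\}$, together with the relations $a' = c_a b'$ and $f_{(1,1,1)}' = b'$ that express the coordinates on $V_1$ in terms of those on $V_2$.

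For part~(i), I would first recall from the remarks preceding the lemma that $\pi_2$ restricts to an isomorphism $S_2 \cong S_1 \cong \Bl_{\Delta}\PP^n\times\PP^n$, and that $S_1 \cap B_1 = \phi_1(e)$ by Lemma~\ref{lem:tangency}. Since $E_2 = \pi_2^{-1}(B_1)$, the locus $E_2 \cap S_2$ is the preimage of $S_1 \cap B_1$ under this isomorphism, so $E_2 \cap S_2 = e$ and therefore $B_2 \cap S_2 = (\tilde{E}_1 \cap E_2) \cap S_2 \subseteq E_2 \cap S_2 = e$. For the reverse inclusion it suffices to show $e \subseteq B_2$, since $e \subseteq S_2$. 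Because $\PGL_n$ acts transitively on $e$ and preserves $B_2$, this may be checked on the chart above: by Lemma~\ref{lemma:phi2}, along $e = \{u_1 = 0\}$ the functions $b' = -2u_1$ and $c_a = u_1/2$ both vanish, so $\phi_2(e) \subseteq \{b' = c_a = 0\} = B_2$. Hence $B_2 \cap S_2 = e$.

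For part~(ii), by $\PGL_n$-invariance I may take $\ell = \mathcal{V}(x_2, \ldots, x_n)$. By the computation in the proof of Lemma~\ref{lem:tangency}, the proper transform $L^{\ell}_1$ in $D(a_{(0,0,0)}) \cap D(b_{(0,1,1)})$ is cut out by $-4a' - (f_{(1,1,1)}')^2 = 0$ with $f_{(1,1,1)}' = 3b_{(1,1,1)} - 2a_{(0,0,1)}$. Substituting $a' = c_a b'$ and $f_{(1,1,1)}' = b'$ turns this into $-b'(4c_a + b') = 0$, whence $L^{\ell}_2 = \{4c_a + b' = 0\}$ on this chart. The differential of $4c_a + b'$ vanishes nowhere, so $L^{\ell}_2$ is smooth on the entire chart; moreover it contains $B_2 = \{b' = c_a = 0\}$, and $B_2$ meets this chart in a non-empty, hence dense, open subset of the irreducible variety $B_2$. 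Thus a general point of $B_2$ lies in the chart and is a smooth point of $L^{\ell}_2$, which proves~(ii).

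The individual computations are routine, so the only points demanding care are the two reduction steps: invoking transitivity of the $\PGL_n$-action on $e$ in part~(i) so that checking $e \subseteq B_2$ on one chart is legitimate, and verifying in part~(ii) that a general point of $B_2$ is already visible in the single chart under consideration, so that the genericity asserted in the statement genuinely follows.
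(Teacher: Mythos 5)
Your proof is correct, and it reaches both conclusions by a more computational route than the paper's. For part~(i) the paper performs essentially the same reduction (via $S_2\cong S_1$ and $S_1\cap B_1=\phi_1(e)$, the content is the single inclusion $e\subseteq B_2$, equivalently $S_2\cap E_2\subseteq \tilde{E}_1$), but establishes it conceptually: since $S_1$ is tangent to $E_1$ along $\phi_1(e)$ (the $a'$-entries of the Jacobian of $\phi_1$ vanish), the normal directions of $S_1$ at $B_1\cap S_1$ land inside $\PP(N_{B_1}E_1)$, forcing $S_2\cap E_2\subseteq \tilde{E}_1\cap E_2=B_2$. You instead read off $c_a=u_1/2$ and $b'=-2u_1$ from Lemma~\ref{lemma:phi2} and conclude directly; this is equivalent and makes the tangency visible in coordinates rather than invoking it. For part~(ii) the paper argues abstractly that $L_2$ is the blow-up of $L_1$ along $B_1\cap L_1$, hence smooth wherever $L_1$ is, while your explicit equation $4c_a+b'=0$ for $L^{\ell}_2$ (consistent with the shape $4f(b_J)^3c_a+g(c_H)^2b'$ appearing in Proposition~\ref{prop:inters} and Remark~\ref{rmk:invlinecond}) proves smoothness on the whole chart and the containment $B_2\subseteq L^{\ell}_2$ in one stroke, which is slightly stronger and arguably easier to verify. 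The two points you flag as needing care are indeed the right ones, and both are fine: transitivity of $\PGL_n$ on $e$ is Lemma~\ref{lemma:PGL_transitive_on_e} (stated later in the paper but logically independent of this lemma, and already used in the same forward-referencing way by the authors), and $B_2$ is irreducible as an iterated projective bundle over $\PP^n$, so its nonempty intersection with the chart is dense.
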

    
    \begin{proof}
    First, recall that $S_1$ is tangent to $E_1$ along $e$. In fact, for any point $p\in e$ we have $T_{\phi_1(p)} S_1 = d\phi_1(T_p (\Bl_{\Delta} \PP^n \times \PP^n))$. Working in the chosen affine chart for $V_1$, since the entry relative to $a'$ in the column vectors of the Jacobian is always zero, then $T_{\phi_1(p)} S_1$ is contained in the tangent space of $E_1$. By invariance under projective transformations this is true everywhere.
    Thus, since $B_1$ intersects $S_1$ along $\phi_1(e)$ we have that $S_2 \cap E_2 \subseteq \Tilde{E_1} \cap E_2 = B_2$ because the tangent space of $S_1$ is contained in the tangent space of $E_1$.
    
    For the second claim, observe that the line conditions in $V_1$ are generically smooth along $B_1$. The claim then follows from the blow-up closure lemma and the fact that the blow-up of a smooth variety is again smooth. 
    \end{proof}

    \begin{rmk}\label{eq:handq}
    A cubic $k \in B_2|_{[\lambda, q]} \cong \PP(\Sym^3(W)/(\lambda \cdot \Sym^2(W)))$, whose defining equation can be uniquely written (up to scaling) in the form $k = k_{(1,1,1)} x_1^3 + k_{(1,1,2)} x_1^2 x_2 + \ldots + k_{(n,n,n)} x_n^3$, not containing any monomial divisible by $x_0$, is identified with the projective coordinates $[c_a,c_H]$ in Remark~\ref{rmk:coord_V2} via $c_a = 0$ and $k_H = 3 c_H$ for all indices $H = (i,j,k)$ with $k \geq j \geq i \geq 1$ and $|H| > 1$ and $k_{(i,i,i)} = c_{(i,i,i)}$ for all $i \geq 1$. In particular, $S_2 \cap B_2$ consists of all triples $(\lambda, q, k) = (\lambda, g^2, g^3)$ for some hyperplane $g$ in $\PP(W/\lambda)$ as follows from the explicit description of $\phi_2$ in Lemma~\ref{lemma:phi2}.
    \end{rmk}

\begin{prop}\label{prop:inters} 
Let $\overline{\lambda}\coloneqq([\lambda],[q],[k])$ be a point of $B_2$, i.e. a hyperplane $\lambda$ together with a quadric $q$ and a cubic $k$. Consider the line condition $L_2^{\ell}$ in $V_2$ corresponding to a line $\ell\subseteq\PP(W)$. Then:
\begin{enumerate}
    \item $\ell$ intersects $\lambda$ at the quadric $q$ if and only if $L_2^{\ell}$ is tangent to $E_2$ at $\overline{\lambda}$;
    \item $\ell$ intersects $\lambda$ at the cubic $k$ if and only if $L_2^{\ell}$ is tangent to $\Tilde{E_1}$ at $\overline{\lambda}$;
\end{enumerate}

\begin{proof}
We can assume the hyperplane $\lambda$ to be $\mathcal{V}(x_0)$ and $\ell$ the line $\mathcal{V}(x_1,x_3,\dots, x_n)$. By plugging in the equations $c_{(1,2,2)} b'-3 b_{(2,2,2)}+2 a_{(0,0,2)} b_{(0,2,2)}=0$ and $a'-c_a b'=0$ in the equation of the proper transform of the line condition in $V_1$, we get the equation for $L^{\ell}$ in local coordinates in $V_3$, i.e. 
\begin{equation*}
    4b_{(0,2,2)}^3 c_a + c_{(2,2,2)}^2 b'
\end{equation*}
From Lemma~\ref{lemma:equations_B1}, one has that the quadrics intersecting $\lambda$ at its point of intersection with $\ell$ are given by the equation: $b_{(0,2,2)} = 0$.
From Remark~\ref{eq:handq} the cubics intersecting $\lambda$ in $\lambda \cap \ell$ are given by the equation $c_{(2,2,2)} = 0$.
The statement on the tangency at $E_2$ and at $\Tilde{E}_1$ follows from the direct computation with the equations.
\end{proof}
\end{prop}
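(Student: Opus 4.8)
The plan is to reduce everything to a single explicit local computation, using the $\PGL_n$-equivariance of the line conditions and of the divisors $E_2,\tilde E_1$. All three incidence conditions in the statement (``$\ell\cap\lambda$ lies on $q$'', ``$\ell\cap\lambda$ lies on $k$'', and tangency of $L_2^\ell$ with $E_2$ or $\tilde E_1$ at $\overline\lambda$) are natural under the $\PGL_n$-action, and $\PGL_n$ acts transitively on pairs consisting of a hyperplane $\lambda$ and a line $\ell\not\subseteq\lambda$; so I may take $\lambda=\mathcal V(x_0)$ and $\ell=\mathcal V(x_1,x_3,\dots,x_n)=\langle e_0,e_2\rangle$, whence $p\coloneqq\ell\cap\lambda=[e_2]$, while $q$ and $k$ stay arbitrary. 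Then $\overline\lambda=(\lambda,q,k)$ sits over $[x_0^3]\in B_0$, inside the affine chart $D(a_{(0,0,0)})\cap D(b_{(0,1,1)})\cap D(c_{(1,1,1)})$ of $V_2$ described in Coordinates~II, where $E_2=\mathcal V(b')$, $\tilde E_1=\mathcal V(c_a)$ and $B_2=\mathcal V(b',c_a)$. (Taking $\ell=\langle e_0,e_2\rangle$ rather than $\langle e_0,e_1\rangle$ is what makes the incidences ``$p\in q$'' and ``$p\in k$'' correspond to the vanishing of honest affine coordinates of this chart.)

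The second step is to compute a local equation of $L_2^\ell$ near $\overline\lambda$ by transporting the line condition through the two blow-ups. Restricting the generic cubic to $\ell=\langle e_0,e_2\rangle$ and taking the discriminant of the resulting binary cubic --- equivalently, applying to \eqref{eq:lineCondition} the coordinate swap $x_1\leftrightarrow x_2$ --- writes $L^\ell$ in $D(a_{(0,0,0)})$ as a polynomial in $a_{(0,0,2)},a_{(0,2,2)},a_{(2,2,2)}$ alone. Substituting the relations of Coordinates~I, $3a_{(0,2,2)}=a_{(0,0,2)}^2+b_{(0,2,2)}a'$ and $9a_{(2,2,2)}=a_{(0,0,2)}a_{(0,2,2)}+b_{(2,2,2)}a'$, the pullback turns out divisible by exactly $a'^2$ --- the summand $2E_1$ forced by Lemma~\ref{lem:lemma0.1}(ii) --- and removing it gives the local equation of $L_1^\ell$, which works out proportional to $(2a_{(0,0,2)}b_{(0,2,2)}-3b_{(2,2,2)})^2+4b_{(0,2,2)}^3 a'$ (the analogous computation for $\ell=\langle e_0,e_1\rangle$ recovers the equation of $L_1^\ell$ obtained in the proof of Lemma~\ref{lem:tangency}, a useful check). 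Substituting next the relations of Coordinates~II and Lemma~\ref{lemma:equations_B1}, $a'=c_a b'$ and $3b_{(2,2,2)}=2a_{(0,0,2)}b_{(0,2,2)}+c_{(2,2,2)}b'$, the first square collapses to $(c_{(2,2,2)}b')^2$ and the pullback picks up exactly one factor $b'$ (only $1\cdot E_2$ appears, since the line conditions are generically smooth along $B_1$ by Lemma~\ref{lem:tangency}). What is left is
\[
g\coloneqq c_{(2,2,2)}^2\,b'+4\,b_{(0,2,2)}^3\,c_a ,
\]
a local equation of $L_2^\ell$ around $\overline\lambda$; in passing this shows $B_2\subseteq L_2^\ell$, so all line conditions still meet after the second blow-up.

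The third step reads the tangencies off $g$ and translates them. Since $b'=c_a=0$ at $\overline\lambda\in B_2$, every monomial of $g$ and all but two terms of $dg$ vanish there, leaving
\[
dg|_{\overline\lambda}=c_{(2,2,2)}^2\,db'+4\,b_{(0,2,2)}^3\,dc_a .
\]
Hence $L_2^\ell$ is tangent to $E_2=\mathcal V(b')$ at $\overline\lambda$ (that is, $dg|_{\overline\lambda}$ is a nonzero multiple of $db'$) exactly when $b_{(0,2,2)}=0$, and tangent to $\tilde E_1=\mathcal V(c_a)$ exactly when $c_{(2,2,2)}=0$. To conclude, one identifies these vanishings: over $[x_0^3]$ one has $a_{(0,0,i)}=3\lambda_i=0$, and then the coordinate description of $B_1$ in Lemma~\ref{lemma:equations_B1} shows $b_{(0,2,2)}$ is a nonzero scalar times the coefficient of $x_2^2$ in $q$, so $b_{(0,2,2)}=0$ says exactly $[e_2]\in q$, i.e. $\ell$ meets $\lambda$ at the quadric $q$; similarly, by Remark~\ref{eq:handq}, $c_{(2,2,2)}$ is the coefficient of $x_2^3$ in $k$, so $c_{(2,2,2)}=0$ says exactly $[e_2]\in k$, i.e. $\ell$ meets $\lambda$ at the cubic $k$. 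This gives both equivalences. (In the single exceptional case $b_{(0,2,2)}=c_{(2,2,2)}=0$, $\overline\lambda$ is a singular point of $L_2^\ell$; this is a codimension-$2$ sublocus of $B_2$, read off according to the convention adopted for ``tangent''.)

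The main obstacle is just the bookkeeping of the second step: pushing the substitutions through the two blow-ups in the correct chart and, crucially, using the a priori multiplicities of the line condition along $B_0$ and $B_1$ --- namely $2$ (Lemma~\ref{lem:lemma0.1}(ii)) and $1$ (Lemma~\ref{lem:tangency}) --- to be sure one divides out exactly $a'^2$ and then exactly $b'$, so that the surviving factor $g$ genuinely defines the proper transform $L_2^\ell$ rather than a component of an exceptional divisor. The remaining ingredients, the differentiation of $g$ and the two coordinate dictionaries ($b_{(0,2,2)}$ versus a coefficient of $q$, and $c_{(2,2,2)}$ versus a coefficient of $k$), are already supplied by Lemma~\ref{lemma:equations_B1} and Remark~\ref{eq:handq}.
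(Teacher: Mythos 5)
Your proposal is correct and follows essentially the same route as the paper's proof: reduce by $\PGL_n$ to $\lambda=\mathcal{V}(x_0)$ and $\ell=\langle e_0,e_2\rangle$, push the discriminant equation through the two blow-ups in the chart of Coordinates~II to obtain the local equation $4b_{(0,2,2)}^3c_a+c_{(2,2,2)}^2b'$ of $L_2^\ell$, and read off the tangencies to $E_2=\mathcal{V}(b')$ and $\tilde{E}_1=\mathcal{V}(c_a)$ via Lemma~\ref{lemma:equations_B1} and Remark~\ref{eq:handq}. You merely spell out the intermediate substitutions and the differential computation that the paper leaves as ``direct computation,'' and correctly track the multiplicities $2$ and $1$ along $B_0$ and $B_1$ when forming proper transforms.
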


\begin{rmk}\label{rem:notcubsmooth}
We can notice that if the line $\ell$ does not intersect the quadric $q$ or the cubic $k$ at the point $\overline{\lambda}$, than the line condition $L^{\ell}$ is smooth at  $\overline{\lambda}$. This is clear from the proof of the previous lemma when $\lambda = x_0$ and $\ell=\mathcal{V}(x_1,x_3,\dots, x_n)$. The claim follows by invariance under projective transformations.
\end{rmk}

	\subsection{Third Blow-up}\label{subsec:thirdblow}
	Let $V_3 \coloneqq \Bl_{B_2} V_2$. This is smooth because $B_2$ is. We stick to the notation $\pi_3:V_3\to V_2$ for the blow-up map and $E_3$ for the exceptional divisor. We denote $L_3$ the proper transform in $V_3$ of the a line condition $L_2\subseteq V_2$, and $S_3$ is the proper transform of $S_2$. 
	
	\begin{crd3}\label{eq:}
	In the chosen chart for $V_2$ described in Remark~\ref{rmk:coord_V2} the base locus $B_2$ is given by $\mathcal{V}(c_{a},b')$.
	Consider \sloppy{$(D(a_{(0,0,0)})\cap D(b_{(0,1,1)})\cap D(c_{(1,1,1)}))\times\PP^1$} with coordinates $(a_{(0,0,i)},b_{(0,j,k)},b',c_{a},c_H,[d_{c},d_{b}])$. The blow-up of $B_2$ in the chosen chart of $V_2$ can be described as the subvariety determined by
	\[b' d_c = d_b c_a.\]
	In the affine chart $D(a_{(0,0,0)})\cap D(b_{(0,1,1)})\cap  D(c_{(1,1,1)})\cap D(d_c)$ of $V_3$ we can work with coordinates $(a_{(0,0,i)},b_{(0,j,k)},c_{a},c_H,d_{b})$. The exceptional divisor $E_3$ is cut out by $c_{a}=0$ in this chart.
	\end{crd3}
	
	\begin{rmk}\label{rmk:invlinecond} The line condition $L_3^{\ell}$ corresponding to $\ell:=\mathcal{V}(x_1,x_3,\dots, x_n)$ has equation
    \begin{equation*}
        4b_{(0,2,2)}^3 + c_{(2,2,2)}^2 d_b=0
    \end{equation*}
    and therefore every other line condition obtained by this one by an induced action of the $\PGL_n$-action preserving this chart will be of the type
    \begin{equation*}
        4f(b_{J})^3 + g(c_{H})^2 d_b=0
    \end{equation*}
    where $f$ is a linear function in the $b_{J}$ coordinates and $g$ is a linear function in the $c_{H}$ coordinates.
	\end{rmk}
    
    We now prove that the intersection of all line conditions coincides with $S_3$.
    
    \begin{prop} \label{prop:intersectionV3}
    The intersection of all line conditions in $V_3$ is supported on the smooth irreducible variety $S_3$.
    \end{prop}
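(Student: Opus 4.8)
The plan is to prove the two set-theoretic inclusions $S_3 \subseteq \bigcap_\ell L_3^\ell$ and $\bigcap_\ell L_3^\ell \subseteq S_3$ separately, the second one carrying essentially all the work. For the first inclusion, recall from Section~\ref{sec:1} that $S_0 \subseteq L^\ell$ for every line $\ell$, and that none of the blow-up centers $B_0, B_1, B_2$ contains the corresponding proper transform of $S_0$: indeed $\dim B_0 = n < 2n = \dim S_0$, while by Lemmas~\ref{lem:tangency} and \ref{lemma:S2intB2} the intersections $S_1 \cap B_1 = \phi_1(e)$ and $S_2 \cap B_2 = e$ have dimension $2n-1$, again strictly below $\dim S_i = 2n$. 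Hence at each stage the proper transform of $S_i$ is contained in the proper transform of $L_i^\ell$, and inductively $S_3 \subseteq L_3^\ell$ for every $\ell$.

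For the reverse inclusion I would start from the fact established in Section~\ref{subsec:twoblow}, that the intersection of all line conditions in $V_2$ is contained in $S_2 \cup B_2$. Since $\pi_3(L_3^\ell) = L_2^\ell$, pulling back gives $\bigcap_\ell L_3^\ell \subseteq \pi_3^{-1}(S_2) \cup \pi_3^{-1}(B_2) = \pi_3^{-1}(S_2) \cup E_3$. A point $P \in \bigcap_\ell L_3^\ell$ with $P \notin E_3$ has $\pi_3(P) \notin B_2$, hence $\pi_3(P) \in S_2 \setminus B_2$, a locus over which $\pi_3$ is an isomorphism, so $P \in S_3$. It therefore remains to prove $E_3 \cap \bigcap_\ell L_3^\ell \subseteq S_3$, in fact $\subseteq S_3 \cap E_3$.

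To handle the exceptional locus I would first observe that every line condition $L_2^\ell$ contains the whole of $B_2$: by Lemma~\ref{lem:tangency} the line conditions are generically smooth and tangent to $E_1$ along $B_1$, which forces the corank-one subbundle $N_{B_1}L_1^\ell$ of $N_{B_1}V_1$ to coincide with $N_{B_1}E_1$ (one contains the other and they have equal rank), whence $L_2^\ell \cap E_2 = \PP(N_{B_1}E_1) = B_2$. Consequently, for a point $\overline\lambda = (\lambda, q, k) \in B_2$ --- a hyperplane $\lambda$ together with a quadric $q$ and a cubic $k$ inside it, as in Lemma~\ref{lem:iden} and Remark~\ref{eq:handq} --- and a line $\ell$ whose intersection point $p_\ell := \ell \cap \lambda$ lies on neither $q$ nor $k$, the line condition $L_2^\ell$ is smooth at $\overline\lambda$ by Remark~\ref{rem:notcubsmooth}, so its proper transform $L_3^\ell$ meets the $\PP^1$-fiber $E_3|_{\overline\lambda} = \PP(N_{B_2}V_2|_{\overline\lambda})$ in the single point $\PP(N_{B_2}L_2^\ell|_{\overline\lambda})$. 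By $\PGL_n$-covariance it suffices to identify this point for $\lambda = \mathcal{V}(x_0)$ and $\ell = \mathcal{V}(x_1, x_3, \dots, x_n)$, where the local equation of $L_2^\ell$ in the coordinates of Coordinates~II is $4b_{(0,2,2)}^3 c_a + c_{(2,2,2)}^2 b' = 0$, and where $b_{(0,2,2)}$ and $c_{(2,2,2)}$ restrict on $B_2$ to $q(p_\ell)$ and $k(p_\ell)$ (the ratio $[\,k(p)^2 : q(p)^3\,]$ being independent of the representative of $p \in \lambda$); reading off the linear part at $\overline\lambda$ and using $b' d_c = c_a d_b$ from Coordinates~III, this point is $[d_c : d_b] = [\,k(p_\ell)^2 : -4\,q(p_\ell)^3\,]$. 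I expect the bookkeeping relating this $\PGL_n$-covariant local picture to the intrinsic description of $E_3$ to be the main technical nuisance of the argument.

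Granting this, the conclusion is pure algebra. Let $P = (\lambda, q, k, [d_c : d_b]) \in E_3 \cap \bigcap_\ell L_3^\ell$. For every line $\ell$ with $p_\ell \notin q \cup k$ we have $[d_c : d_b] = [\,k(p_\ell)^2 : -4\,q(p_\ell)^3\,]$; as $p_\ell$ ranges over a dense open of $\lambda \cong \PP^{n-1}$, the morphism $p \mapsto [\,k(p)^2 : -4\,q(p)^3\,]$ is therefore constant, equal to $[d_c : d_b] = [\gamma : \delta]$, which yields the polynomial identity $\delta k^2 + 4\gamma q^3 = 0$. Since $q$ and $k$ are nonzero, both $\gamma$ and $\delta$ are nonzero; normalizing $\delta = 1$ we get $k^2 = -4\gamma\, q^3$, and comparing irreducible factorizations forces every irreducible factor of $q$ to appear with even multiplicity, so (as $\deg q = 2$ and $\mathbbl{k}$ is algebraically closed) $q = g^2$ for a linear form $g$; then $k^2 = -4\gamma\, g^6$ gives $k = g^3$ up to a scalar, and substituting back yields $-4\gamma = 1$, i.e. $[d_c : d_b] = [1:-4]$. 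By the explicit formula for $\phi_2$ in Lemma~\ref{lemma:phi2}, which exhibits $S_2 \cap B_2$ as the locus $\{(\lambda, g^2, g^3)\}$ and gives $d_b = b'/c_a = -4$ along $S_2$, the point $P = (\lambda, g^2, g^3, [1:-4])$ lies on $S_3 \cap E_3$. This proves $\bigcap_\ell L_3^\ell = S_3$ set-theoretically; finally $S_3$ is smooth and irreducible because $S_2 \cong \Bl_\Delta \PP^n \times \PP^n$ and $S_2 \cap B_2 = e$ is a Cartier divisor in $S_2$, so that $S_3 \cong S_2$.
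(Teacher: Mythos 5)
Your overall architecture is sound and, in its decisive step, genuinely different from the paper's. Both arguments reduce to analyzing the fibers of $E_3$ over points $\overline\lambda=(\lambda,q,k)\in B_2$, using that a line condition with $p_\ell=\ell\cap\lambda\notin q\cup k$ is smooth at $\overline\lambda$ and hence meets the $\PP^1$-fiber in a single point. But where the paper then runs a hands-on elimination with explicit one-parameter families $L^{x_i+tx_j}$, $L^{x_j}$, $L^{i,j,t}$ (first producing an index $l$ with $h_lg_l\neq 0$, then $h_i=0\Leftrightarrow g_i=0$, then $h_j=g_j$), you package everything into the single polynomial identity $\delta k^2+4\gamma q^3=0$ and finish by unique factorization. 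This is cleaner and more conceptual; it also makes explicit two things the paper leaves implicit, namely the inclusion $S_3\subseteq\bigcap_\ell L_3^\ell$ and the fact that over $e=S_2\cap B_2$ the common fiber point is exactly $[d_c:d_b]=[1:-4]$, i.e.\ the point of $S_3\cap E_3$ given by Lemma~\ref{lemma:phi3}.

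The price is that your entire endgame rests on the claim that for \emph{every} line $\ell$ with $p_\ell\notin q\cup k$ the fiber point of $L_3^\ell$ is $[\,k(p_\ell)^2:-4\,q(p_\ell)^3\,]$ with a normalization of $q$ and $k$ that is \emph{uniform in $\ell$}, and the justification offered --- ``$\PGL_n$-covariance'' from the single computation for $\ell=\mathcal{V}(x_1,x_3,\dots,x_n)$ --- does not deliver this as stated. To transport the computation from $\ell_0$ to another line $\ell$ while keeping the same fiber $E_3|_{\overline\lambda}$ and the same coordinates $[d_c:d_b]$ on it, you would need an element of $\PGL_{n+1}$ stabilizing $(\lambda,q,k)$ (and the chart) and carrying $\ell_0$ to $\ell$; for general $q,k$ the stabilizer is far too small, so no such element exists. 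What is actually needed is a direct verification that for a sufficiently large family of lines (enough that $p_\ell$ is dense in $\lambda$, e.g.\ all lines through a fixed point off $\lambda$) the local equation of $L_2^\ell$ at $\overline\lambda$ has the form $4f_\ell(b_J)^3c_a+g_\ell(c_H)^2b'+\cdots$ with $f_\ell(\overline\lambda)$ and $g_\ell(\overline\lambda)$ proportional to $q(p_\ell)$ and $k(p_\ell)$ by $\ell$-independent constants; Remark~\ref{rmk:invlinecond} gives the shape of the equation but not this identification, and the multiplicative constants (the factors of $3$ relating $b_{(0,i,j)},c_H$ to the coefficients of $q,k$ in Lemma~\ref{lemma:equations_B1} and Remark~\ref{eq:handq}) must be tracked to land on $[1:-4]$ rather than some other point of the fiber. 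This is exactly the computation the paper performs for its chosen families, so the gap is reparable --- but it is the heart of the proof, not a bookkeeping nuisance, and as written your argument has not closed it.
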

    \begin{proof}
        The base locus $B_2=\PP(N_{B_1}E_1)$ has codimension $2$ in $V_2$. The exceptional divisor $E_3=\PP(N_{B_2}V_2)$ is then a $\PP^1$-bundle over $B_2$.
        Let $\overline{\lambda}\coloneqq([\lambda],[q],[k])$ be a fixed point in $B_2$ with $\pi_2\circ\pi_1(\overline{\lambda})=[\lambda^3] \in B_0$, i.e., a hyperplane $\lambda$ together with a quadric $q$ and a cubic $k$ lying on $\lambda$.
        Thanks to Remark~\ref{rem:notcubsmooth}, a general line condition is smooth at $\overline{\lambda} \in B_2$, has codimension one, and contains $B_2$. Its proper transform intersects the fiber of $\PP(N_{B_2}V_2)$ over $\overline{\lambda}$ at most in one point. 
        We need to check that line conditions in $V_3$ can only intersect in $E_3$ above $B_2\cap S_2$.
       
       The base locus $B_2=E_2\cap \tilde{E_1}$ is smooth of codimension $2$ in $V_2$. Therefore, the proper transforms of $\Tilde{E_1}$ and $E_2$ in $V_3$ cut the fiber of $E_3$ over any $\overline{\lambda}\in B_2$ in different points $r_1$ and $r_2$.
       From Proposition \ref{prop:inters} follows that if a line $\ell$ intersects $q$, then the line condition $L^{\ell}_3$ contains $r_2$, while if $\ell$ intersects $k$, then the line condition $L^{\ell}_3$ contains the point $r_1$. 
       
       We claim that in order for the line conditions to intersect over $\overline{\lambda}$ we must have $q=hg$ and $k=h^2g$ where $h,g$ are linear forms on the hyperplane $\lambda$. In fact, suppose there is a point of $q$ which is not in $k$. Then, we can take a line $\ell$ in $\PP^n$ passing through that point and not contained in $\lambda$. Thanks to Remark~\ref{rem:notcubsmooth}, the line condition $L^{\ell}_2$ is smooth at $\overline{\lambda}$ and $L^{\ell}_3$ intersects the fiber over $\overline{\lambda}$ in a unique point, necessarily in $r_2$. Take now another line condition $L^{\ell'}_2$ in $V_2$ such that the line $\ell'$ does not intersect the cubic nor the quadric. The line condition $L^{\ell'}_2$ is a hypersurface which is smooth at $\overline{\lambda}$ and contains $B_2$. If its proper transform intersects the fiber over $\overline{\lambda}$ in $r_2$, then $L^{\ell'}_2$ is tangent to $E_2$, and by Proposition \ref{prop:inters} it must intersect the quadric.
   
       Similarly, we can show that there is no point of $q$ which is not in $k$. Hence we proved that in order for the line conditions to intersect over $\overline{\lambda}$ we must have $q=k$ set-theoretically. But this is equivalent to $q=hg$ and $k=h^2g$ with $h,g$ linear forms on the hyperplane $\lambda$.
       
       By Remark~\ref{eq:handq}, we just have to show that $g=h$. It is enough to show it for $\lambda=x_0$ because the locus $B_2\cap S_2$ is invariant under the induced $\PGL_n$-action on $V_2$.
       Consider the point  $\overline{x_0}=([x_0^3],[q],[k])$, where 
       \[q=(h_1x_1+\dots+h_nx_n)(g_1x_1+\dots + g_n x_n), \quad k=(h_1x_1+\dots+h_nx_n)^2(g_1x_1+\dots + g_nx_n)\]
       are respectively a quadric and a cubic on the hyperplane $x_0=0$. 
       
       We claim there exists an index $l$ such that $\overline{x_0}$ belongs to $D(b_{(0,l,l)})$. First, fix $i$ and $j$ such that $\overline{x_0}$ belongs to the affine chart $D(b_{(0,i,j)})\cap D(c_{(i,i,j)})$. Then, we can work in this affine chart with its coordinates. For $t \in \mathbbl{k}$, consider the line conditions $L^{x_i+tx_j}_2$ in $V_2$ corresponding to the line given by the vanishing of $x_i+tx_j=0$ and of all coordinates except for $x_0,x_i,x_j$. Their equations in $V_2$ are
       \[4(t^2b_{(0,i,i)}+b_{(0,j,j)}-t)^3c_{a}+(t^3c_{(i,i,i)}-3t^2-c_{(j,j,j)}+3tc_{(i,j,j)})^2b'=0\]
       in the mentioned open chart, where $b'=f_{(i,i,j)}'$ and $a'=f_{(0,i,j)}$.
       Notice that thanks to Lemma \ref{lemma:equations_B1} and Lemma \ref{eq:handq}, we have a relation between the coordinates and the coefficients of $q$ and $k$.
       Suppose $b_{(0,l,l)}(\overline{x_0})=h_lg_l=0$ for all indices. 
       The line conditions $L^{x_i+tx_j}_3$ intersect the fiber over $\overline{x_0}$ in $V_3$ in the points 
       \[[-4t^3,(3t^2-3tc_{(i,j,j)}(\overline{x_0}))^2].\]
       Recall that we are working in a chart such that $h_i^2g_j=c_{(i,i,j)}(\overline{x_0})\neq 0$, therefore $c_{(i,j,j)}(\overline{x_0})=h_j^2g_i=0$ and the points become
       \[[-4t^3,9t^4]=[-4,9t].\] 
       This is absurd because we are assuming these line conditions to intersect over $\overline{x_0}$ and we proved the claim. Without loss of generality, we put $l=1$, and we work in the affine chart  $D(a_{(0,0,0)})\cap D(b_{(0,1,1)})\cap D(c_{(1,1,1)})$ of $V_2$, so we can assume $h_1=g_1=1$.
       
       We claim that $h_i$ is zero if and only if $g_i$ is zero for every index $i$. Suppose there exists an index $i$ such that $h_i=0$ and $g_i\neq 0$ and consider the line conditions $L^{x_i+tx_1}_2$ in $V_2$ with equations
       \[4(t^2b_{(0,i,i)}+1-tb_{(0,1,i)})^3 c_{a}+(t^3c_{(i,i,i)}-3t^2c_{(i,i,1)}-1+3tc_{(1,1,i)})^2 b'=0\]
       in the chosen affine chart. The line conditions $L^{x_i+tx_1}_3$ intersect the fiber over $\overline{x_0}$ in $E_3$ in the points
       \begin{align*}
       [4(1-t b_{(0,1,i)}(\overline{x_0}))^3,(1-3t c_{(1,1,i)}(\overline{x_0}))^2]&=[4(1-t h_i-tg_i)^3,(1-tg_i)^2]\\
       &=[4(1-t g_i)^3,(1-t g_i)^2].
       \end{align*}
       Notice that for $g_i\neq 0$, these would give different points for different values of $t$ which is absurd. The same reasoning holds for $g_i=0$ and $h_i\neq 0$ proving then the claim.
       Finally, if we consider the line condition $L^{x_j}_2$ in $V_2$ corresponding to the line given by the vanishing of all coordinates except for $x_0$ and $x_j$ then this has equation
       \[4b_{(0,j,j)}^3c_a+c_{(j,j,j)}^2b'=0\]
       in the chosen open chart. If we assume  $(b_{(0,j,j)}(\overline{x_0}),c_{(j,j,j)}(\overline{x_0}))\neq(0,0)$, we must have
       \begin{equation*}
           \begin{split}
           [4,1]=[4(b_{(0,j,j)}(\overline{x_0}))^3,(c_{(j,j,j)}(\overline{x_0}))^2] \Leftrightarrow &
           (b_{(0,j,j)}(\overline{x_0}))^3=(c_{(j,j,j)}(\overline{x_0}))^2 \\
            \Leftrightarrow &
           g_j^3h_j^3=g_j^4h_j^2 
           \end{split}
       \end{equation*}
       and therefore $g_j=h_j$ when $h_j$ is non-zero.
    \end{proof}
    
    Since $S_3$ will be the next center for the blow-up, we denote it with $B_3$. From Lemma~\ref{lemma:S2intB2} follows that $B_3$ is isomorphic to $S_2$. In particular, the isomorphism map $\phi_2:\Bl_{\Delta}\PP^n\times\PP^n\to S_2$ defined in Lemma~\ref{lemma:phi2} lifts to the following map.
	
	\begin{lemma}\label{lemma:phi3}
    The lift $\phi_3: \Bl_{\Delta}\PP^n\times\PP^n \to V_3$ of $\phi_2$ on the chosen open charts is explicitly given by
    \begin{equation*}
        \begin{split}
        a_{(0,0,1)}&=3\mu_1 + u_1,\\
        a_{(0,0,i)}&=3\mu_i + s_i u_1 \quad\hbox{ for }i>1,\\
        b_{(0,i,i)}&=s_i^2  \;\qquad\qquad\, \hbox{ for }i>1,\\
        b_{(0,1,i)}&=2s_i \qquad\qquad\hbox{ for }i>1,\\
        b_{(0,i,j)}&=2s_i s_j \qquad\quad\,\hbox{ for }j>i>1,\\
        c_a&={u_1}/ \, {2},
        \end{split}\qquad \qquad \qquad
        \begin{split}
        c_{(1,1,i)} &= s_i \qquad\;\;\; \hbox{ for }i>1,\\
        c_{(1,i,i)} &= s_i^2  \quad\quad\;\;\,\,\hbox{ for }i>1,\\
        c_{(i,i,j)} &= s_i^2 s_j \quad\;\;\;\, \hbox{ for }i, j> 1,\\
        c_{(i,j,k)} &= 2 s_i s_j s_k \;\;\hbox{ for }k>j>i>0, \\
        d_b &=-4. 
        \end{split}
    \end{equation*}
    \end{lemma}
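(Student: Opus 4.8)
The plan is to follow the proofs of Lemma~\ref{lemma:liftofphi0} and Lemma~\ref{lemma:phi2}, in two steps: first justify that the lift exists, then read off its coordinates. For the existence, by the universal property of blowing up the morphism $\phi_2\colon\Bl_{\Delta}\PP^n\times\PP^n\to V_2$ lifts to a morphism $\phi_3\colon\Bl_{\Delta}\PP^n\times\PP^n\to V_3=\Bl_{B_2}V_2$ as soon as $\phi_2^{\ast}\mathcal{I}(B_2)\cdot\mathcal{O}_{\Bl_{\Delta}\PP^n\times\PP^n}$ is an invertible ideal sheaf. In the chart of Coordinates~II one has $B_2=\mathcal{V}(c_a,b')$ with $(c_a,b')$ a regular sequence, and Lemma~\ref{lemma:phi2} gives $\phi_2^{\ast}(c_a)=u_1/2$ and $\phi_2^{\ast}(b')=-2u_1$; hence the pullback ideal equals $(u_1)$, which is exactly the ideal of the exceptional divisor $e$ and in particular invertible. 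Since everything is $\PGL_n$-invariant, the same holds in every chart, so $\phi_3$ exists and satisfies $\pi_3\circ\phi_3=\phi_2$; this also re-proves that $B_3=S_3$ is isomorphic to $S_2$.

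It then remains to compute $\phi_3$ in the coordinates of Coordinates~III. The relation $\pi_3\circ\phi_3=\phi_2$ forces every coordinate among the $a_{(0,0,i)}$, $b_{(0,j,k)}$, $c_a$, $c_H$ to coincide with the one already recorded in Lemma~\ref{lemma:phi2}, so only the new coordinate $d_b$ has to be determined. On the dense open set $\{u_1\neq0\}$ the point $\phi_2(p)$ avoids $B_2=\mathcal{V}(c_a,b')$, so $\pi_3$ is an isomorphism there, and in the chart $D(d_c)$ of $V_3$, where $d_b=b'/c_a$, one obtains $d_b=\phi_2^{\ast}(b')/\phi_2^{\ast}(c_a)=(-2u_1)/(u_1/2)=-4$. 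Thus on $\{u_1\neq0\}$ the morphism $\phi_3$ coincides with the morphism into $D(d_c)\subseteq V_3$ defined by the formulas in the statement, namely the $a,b,c$-coordinates of Lemma~\ref{lemma:phi2} together with $d_b\equiv-4$. As $\{u_1\neq0\}$ is dense in the reduced variety $\Bl_{\Delta}\PP^n\times\PP^n$ and $V_3$ is separated, these two morphisms agree everywhere; in particular $\phi_3$ maps the entire chart into $D(d_c)$ with $d_b$ identically equal to $-4$, which is the assertion.

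I do not expect a genuine obstacle here. The two points needing a little care are (i) checking that $\phi_2^{\ast}\mathcal{I}(B_2)$ is invertible \emph{as an ideal sheaf}, not merely that $\phi_2^{-1}(B_2)$ is set-theoretically a Cartier divisor — which is immediate from $\bigl(\phi_2^{\ast}(c_a),\phi_2^{\ast}(b')\bigr)=(u_1/2,-2u_1)=(u_1)$ together with $\PGL_n$-invariance — and (ii) confirming that the image of $\phi_3$ lies in the chart $D(d_c)$ and not $D(d_b)$, equivalently that the normal direction $[c_a:b']=[1:-4]$ is constant along $\phi_2(\Bl_{\Delta}\PP^n\times\PP^n)$. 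Everything else is the routine substitution of the formulas of Lemma~\ref{lemma:phi2} into the chart description of $V_3$ in Coordinates~III.
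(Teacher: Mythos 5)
Your proof is correct and follows the same route the paper takes (the paper states this lemma without a written proof, deferring to the ``similar reasoning'' of Lemma~\ref{lemma:liftofphi0}): lift via the universal property of blowing up after checking that $\phi_2^{\ast}\mathcal{I}(B_2)=(u_1/2,-2u_1)=(u_1)$ is invertible, note that the $a,b,c$-coordinates are forced by $\pi_3\circ\phi_3=\phi_2$, and read off $d_b=\phi_2^{\ast}(b')/\phi_2^{\ast}(c_a)=-4$ on the dense open $\{u_1\neq0\}$, extending by separatedness. Your two flagged care points (invertibility of the ideal sheaf and landing in the chart $D(d_c)$) are exactly the right ones and are handled correctly.
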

    
    \begin{rmk}\label{rmk:equB3}
    The equations
    \begin{alignat*}{2}
     & d_b+4 = 0, \\
    g_{(0,1,i)}&:=b_{(0,1,i)}-2c_{(1,1,i)} = 0  \quad\qquad &&\hbox{ for }i>1,\\
    g_{(0,i,i)}&:=b_{(0,i,i)}-c_{(1,1,i)}^2 = 0 \quad\qquad  &&\hbox{ for }i>1,\\
    g_{(0,i,j)}&:=b_{(0,i,j)}-2c_{(1,1,i)}c_{(1,1,j)} = 0 \quad\qquad  &&\hbox{ for }j>i>1,\\
    g_{(1,i,i)}&:=c_{(1,i,i)}-c_{(1,1,i)}^2 = 0 \quad\qquad &&\hbox{ for }i> 1,\\
    g_{(i,i,j)}&:=c_{(i,i,j)}-c_{(1,1,i)}^2c_{(1,1,j)} = 0 \quad\qquad &&\hbox{ for }i,j>1,\\
    g_{(i,j,k)}&:=c_{(i,j,k)}- 2 c_{(1,1,i)}c_{(1,1,j)}c_{(1,1,k)} = 0\quad\qquad  &&\hbox{ for }k>j>i>0.
    \end{alignat*}
    cut out $B_3$ in the chosen affine open chart. 
    \end{rmk}

	\subsection{Fourth Blow-up}\label{subsec:fourblow}
	Recall that $B_3=S_3$. Let $V_4\coloneqq\Bl_{B_3}V_3$.
    We will write $E_4$ for the exceptional divisor and $\pi_4: V_4 \to V_3$ for the blow-up map.
    
    \begin{crd4}\label{coordinates:4}
	In the chosen affine chart of $V_3$ the base locus $B_3$ is cut out by the equations in Remark~\ref{rmk:equB3}.
    Consider $D(a_{(0,0,0)})\cap D(b_{(0,1,1)})\cap D(c_{(1,1,1)})\cap D(d_c)\times \PP^{\binom{n+3}{3}-2n-2}$ with coordinates $(a_{(0,0,i)},b_{(0,j,k)},c_{a},c_H,d_{b},[e_d,e_F])$ where $F$ is the set of indices $(i,j,k)$ with $k\geq j\geq i\geq 0$ and $j>1$. The blow-up of $V_3$ along $B_3$ in the chosen affine chart can be described as the subvariety determined by
    \begin{alignat*}{2}
        &e_d g_{(i,j,k)} - (d_b+4) e_{(i,j,k)} = 0 \;\;\quad&& \hbox{ for } (i,j,k)\in F,\\
        &e_{(i_1,j_1,k_1)} g_{(i_2,j_2,k_2)} - g_{(i_1,j_1,k_1)} e_{(i_2,j_2,k_2)} = 0 \;\;\quad&& \hbox{ for } (i_1,j_1,k_1),(i_2,j_2,k_2)\in F.
    \end{alignat*}
	In the affine chart $D(a_{(0,0,0)})\cap D(y_{(0,1,1)})\cap D(c_{(1,1,1)})\cap D(d_c)\cap D(e_{(0,1,2)})$ of $V_4$ we can work with coordinates $(a_{(0,0,i)},c_{a},c_{(1,1,2)},..,c_{(1,1,n)},e_d,e_F,e')$ where $e'=g_{(0,1,2)}$ is used as a coordinate and $F$ is the same index set as above but we exclude $(0,1,2)$. The exceptional divisor $E_4$ is cut out by $e'=0$ in this chart.
	\end{crd4}
	
	\begin{prop}\label{prop:vector_bundle}
    The intersection of all line conditions in $V_4$ is supported on a smooth subvariety $B_4$ of codimension $\binom{n+2}{3}$ inside $E_4$. More precisely, $B_4=\PP(\mathcal{E})$  where $\mathcal{E}$ is a subvector bundle of rank $\binom{n}{2}$ of the normal bundle $N_{B_3} V_3$ .
    \end{prop}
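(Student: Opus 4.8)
The plan is to reduce to a fibrewise statement over $B_3=S_3$, carry out the computation in the charts of Coordinates~IV, and read off the rank of $\mathcal{E}$ from Lemma~\ref{lem:lemma0.1}(iii). Since $\pi_4$ sends $\bigcap_\ell L_4^\ell$ into $\bigcap_\ell L_3^\ell=B_3$ by Proposition~\ref{prop:intersectionV3}, and $B_3$ is the centre of the blow-up, one has $\bigcap_\ell L_4^\ell\subseteq\pi_4^{-1}(B_3)=E_4$, so $\bigcap_\ell L_4^\ell=\bigcap_\ell(L_4^\ell\cap E_4)$. As $B_3$ is smooth and contained in every irreducible hypersurface $L_3^\ell$, the scheme $L_4^\ell\cap E_4$ is the projectivised normal cone $\PP(C_{B_3}L_3^\ell)\subseteq E_4=\PP(N_{B_3}V_3)$, i.e.\ $\PP$ of the zero locus of the leading form of a local equation of $L_3^\ell$ along $B_3$. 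It therefore suffices to show that over each $p\in B_3$ this common zero locus is a linear subspace of $\PP(N_{B_3,p}V_3)$ of dimension exactly $\binom n2-1$, and that these subspaces assemble into $\PP(\mathcal{E})$ for a rank-$\binom n2$ subbundle $\mathcal{E}\subseteq N_{B_3}V_3$.

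To produce the linear forms I would work in the Coordinates~IV chart and its $\PGL_n$-translates. By Remark~\ref{rmk:invlinecond} a line condition in the Coordinates~III chart of $V_3$ reads $4f(b_J)^3+g(c_H)^2 d_b=0$ with $f,g$ linear; substituting the equations of Remark~\ref{rmk:equB3} expresses each $b_J,c_H$ as a function of the $B_3$-coordinates $c_{(1,1,i)}$ plus a combination of the normal coordinates $d_b+4$ and $g_\bullet$, all divisible by the exceptional coordinate $e'$ in the Coordinates~IV chart ($d_b+4=e_d e'$, $g_\bullet=e_\bullet e'$). The part of order zero in $e'$ of the resulting expression is the restriction of $L_3^\ell$ to $B_3$, which vanishes since $B_3\subseteq L_3^\ell$; the order-one part is a linear form in $(e_d,e_\bullet)$ which, for $\ell$ generically smooth along $B_3$ (in particular $\PGL_n$-generic $\ell$), is not identically zero. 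Dividing by $e'$ and setting $e'=0$ then exhibits $L_4^\ell\cap E_4$, for such $\ell$, as $\PP$ of a hyperplane subbundle of $N_{B_3}V_3$; concretely, for $\ell=\mathcal V(x_1,x_3,\dots,x_n)$ the linear form is $12\,c_{(1,1,2)}\,e_{(0,2,2)}-8\,e_{(2,2,2)}+c_{(1,1,2)}^3\,e_d$, up to the unit $c_{(1,1,2)}^{-3}$. Hence, locally, $\bigcap_\ell(L_4^\ell\cap E_4)$ is the zero locus in $\PP(N_{B_3}V_3)$ of a matrix $M$ of fibrewise-linear forms with coefficients regular on $B_3$.

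To compute $\rk M$ over the dense open $B_3\setminus e$, I use that there $\pi_1,\pi_2,\pi_3$ are isomorphisms, so $V_3=V_0$, $L_3^\ell=L^\ell$, and $N_{B_3,[\lambda\mu^2]}V_3=\Sym^3(W)/(\mu^2W+\lambda\mu W)$ for distinct hyperplanes $[\lambda]\neq[\mu]$. By Lemma~\ref{lem:lemma0.1}(iii) the reduced normal cone of $L^\ell$ at $[\lambda\mu^2]$ is the image of $\{F\in\Sym^3(W):F(q_\ell)=0\}$, $q_\ell=\ell\cap\mathcal V(\mu)$ being the tangency point; as $\ell$ ranges over lines meeting $\mathcal V(\mu)$ transversally off $\mathcal V(\lambda)$, the $q_\ell$ exhaust $\mathcal V(\mu)$, so the intersection equals the image of $\{F:F|_{\mathcal V(\mu)}=0\}=\mu\cdot\Sym^2(W)$, and multiplication by $\mu$ gives $\mu\Sym^2(W)/(\mu^2W+\lambda\mu W)\cong\Sym^2(W)/(\mu W+\lambda W)\cong\Sym^2(W/\langle\lambda,\mu\rangle)$, of dimension $\binom n2$. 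Thus $\rk M=\rk N_{B_3}V_3-\binom n2$ on $B_3\setminus e$. Over the locus $e$ I would note that the coefficients of the linear forms attached to the line conditions visible in the charts (e.g.\ those used in the proof of Proposition~\ref{prop:intersectionV3}) do not involve the coordinate $c_a$, while $e=\{c_a=0\}$ in these charts, so $\rk M$ is constant on $B_3$. A matrix of constant rank over the smooth variety $B_3$ has locally free, locally split kernel, so $\mathcal E:=\ker M$ is a subbundle of $N_{B_3}V_3$ of rank $\binom n2$ with $B_4=\bigcap_\ell L_4^\ell=\PP(\mathcal E)$ taken with its reduced structure, smooth since $\mathcal E$ is a bundle over the smooth $B_3$; and $\codim_{E_4}B_4=\bigl(\binom{n+3}{3}-2\bigr)-\bigl(2n+\binom n2-1\bigr)=\binom{n+2}{3}$, using $\binom{n+3}{3}=\binom{n+2}{3}+\binom{n+2}{2}$ and $\binom{n+2}{2}=\binom n2+2n+1$.

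The main obstacle is the second step: correctly extracting the leading form of a line condition along $B_3$ modulo the non-linear relations of Remark~\ref{rmk:equB3}, verifying that it is genuinely linear in the exceptional fibre coordinates uniformly in $\ell$, and that the relevant charts cover $B_4$; closely related is the need to rule out a rank drop of $M$ (equivalently a jump of $\mathcal E$) over the locus $e$, which is what the $c_a$-independence of the coefficients is meant to handle. By contrast, the genuinely hard problem — absent from \cite{Aluffi:1990} — of identifying $\mathcal E$ explicitly and computing its Chern classes plays no role here and is taken up later.
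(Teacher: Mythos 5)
Your overall strategy coincides with the paper's: both arguments reduce to identifying the fibre of the intersection over $B_3$, compute it over $B_3\setminus e$ via Lemma~\ref{lem:lemma0.1}(iii) as the image of $\mu\cdot\Sym^2(W)$ in $N_{B_3}V_3|_{[\lambda\mu^2]}$ (of dimension $\binom n2$), and handle the locus $e$ by the same trick, namely that the equations of the line conditions for lines through $[1:0:\cdots:0]$ do not involve $c_a$, so the rank computation can be transferred from $e=\{c_a=0\}$ to $B_3\setminus e$ and then spread over all of $e$ by $\PGL_n$-transitivity (Lemma~\ref{lemma:PGL_transitive_on_e}). Your packaging --- a matrix $M$ of fibrewise linear forms of constant rank, with $\mathcal E=\ker M$ a locally split subbundle --- is a clean alternative to the paper's construction of $\mathcal E$ as an intersection of images of tangent sheaves followed by a semicontinuity argument, and your explicit leading form $12c_{(1,1,2)}^4e_{(0,2,2)}-8c_{(1,1,2)}^3e_{(2,2,2)}+c_{(1,1,2)}^6e_d$ is correct. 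The codimension count is also correct.

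There is, however, one genuine gap: as written you only obtain the inclusion $\bigcap_\ell L_4^\ell\subseteq\PP(\ker M)$. The rows of $M$ are the degree-one parts of the local equations of the $L_3^\ell$ along $B_3$; at a point $p\in B_3$ where such a degree-one part vanishes (your displayed form dies where $c_{(1,1,2)}=0$, and in general this happens exactly where $L_3^\ell$ is singular, e.g.\ when $\ell$ meets $\lambda\cap\mu$ or lies in $\mu$), the actual fibre $L_4^\ell\cap E_4|_p$ is cut out by the leading form of degree $\geq 2$, which is a \emph{proper} subset of the zero locus of the (zero) linear form. So it is not automatic that $\PP(\ker M(p))\subseteq L_4^\ell$ for such $\ell$ and $p$; without this, the intersection of all line conditions could a priori be strictly smaller than $\PP(\mathcal E)$, which would break the identification $B_4=\PP(\mathcal E)$ on which all later Chern class computations rest. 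The paper closes exactly this point with a Zariski-closure argument: by Lemma~\ref{lem:lemma0.1}(i) every $L^\ell$ is generically smooth along $S_0$, so $L_4^\ell$ contains $\PP(\mathcal E|_U)$ for a dense open $U\subseteq B_3$ depending on $\ell$; since $\PP(\mathcal E)$ is irreducible and $L_4^\ell$ is closed, $\PP(\mathcal E)\subseteq L_4^\ell$ for every $\ell$. Adding this step makes your argument complete.
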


    \begin{proof}
    We generalize the proof of \cite[Proposition 4.1]{Aluffi:1990}. Let $R_{\mu} \subseteq V_0$ denote the subvariety of cubics containing the hyperplane $\mu$. Clearly, $R_\mu \cong \PP(\Sym^2(W))$ is smooth. By Lemma \ref{lem:lemma0.1}, a line condition $L^{\ell}$ is smooth at $[\lambda \mu^2] \in S_0 \setminus B_0$ if the line $\ell$ intersects $\mu$ in a single point outside $\lambda$. Clearly, $T_{[\lambda \mu^2]} R_\mu \subseteq T_{[\lambda \mu^2]} L^{\ell}$ for every line $\ell$, and Lemma \ref{lem:lemma0.1} shows that
    \begin{equation*}\label{eq:intersection_tangent_spaces_line_conditions}
    \bigcap_{\ell \subseteq \PP(W) \text{ line}} T_{[\lambda \mu^2]} L^{\ell} = T_{[\lambda \mu^2]} R_\mu.
    \end{equation*}
    Clearly, finitely many lines suffice for the intersection of the tangent spaces to agree with $T_{[\lambda \mu^2]} R_\mu$ over every point $[\lambda \mu^2] \in B_3 \setminus e \cong S_0 \setminus B_0$. By Proposition~\ref{prop:intersectionV3}, the intersection of the proper transforms $L_{3}^{\ell}$ in $V_3$ for all lines $\ell$ agrees set-theoretically with $S_3 = B_3$. The proper transforms $L^{\ell}_{4}$ in $V_4$ therefore only intersect in the exceptional divisor $E_4$. We claim that their intersection is precisely the projectivization of a vector subbundle $\mathcal{E} \subseteq N_{B_3} V_3$. We construct $\mathcal{E}$ as the intersection of the images of the tangent sheaves $\mathcal{T} L^{\ell}_{3}|_{B_3}$ in $N_{B_3} V_3$ corresponding to finitely many lines $\ell$. The finiteness will ensure that the resulting subsheaf $\mathcal{E}$ of $N_{B_3} V_3$ is coherent. First, we pick finitely many lines such that the intersection of the tangent spaces over every point $[\lambda \mu^2] \in B_3 \setminus e$ agrees with $T_{[\lambda \mu^2]} R_\mu$. The intersection of the images of the tangent sheaves in $N_{B_3} V_3$ of these line conditions defines a coherent subsheaf $\mathcal{E}'$ which restricts to a subvector bundle over $B_3 \setminus e$. Then by Lemma~\ref{lem:lemma0.1} and a Zariski closure argument, every other line condition $L^{\ell}_{4}$ contains the projectivization $\PP(\mathcal{E}'|_{B_3 \setminus e})$, and we have 
    \[\mathcal{E}'([\lambda \mu^2]) \cong T_{[\lambda\mu^2]} R_{\mu} / T_{[\lambda\mu^2]} S_0,\] 
    where $\mathcal{E}'(p)$ denotes the geometric fiber of $\mathcal{E}'$ over the point $p$. The rank of $\mathcal{E}'$ over $B_3 \setminus e$ is $r = \binom{n+2}{2} - 2n - 1 = \binom{n}{2}$. Next, we fix a point $p \in e = B_3 \cap E_3$ lying in our affine open chart. By Remark~\ref{rmk:invlinecond}, in the chosen affine chart the equation for $L^{{\ell}}_{3}$ with ${\ell}$ any line passing through the point $[1:0:\cdots:0]$ does not depend on the variable $c_{a}$, and the equation determining $E_3$ in $V_3$ is exactly $c_a = 0$. The transversality of such line conditions can therefore be checked outside of $E_3$ and hence in $S_0 \setminus B_0$. This shows at once that there are $\codim(R_\mu, V_0) = \binom{n+2}{3}$ lines ${\ell}_i \subseteq \PP(W)$ such that the line conditions $L^{{\ell}_i}_{3}$ are all smooth and intersect transversally at $p$. Moreover, employing the $\PGL_n$-action and using that it acts transitively on $e$ by Lemma~\ref{lemma:PGL_transitive_on_e}, we obtain finitely many more lines such that the intersection of their tangent spaces at every point of $e$ has dimension at most $r$. Let $\mathcal{E}$ be the intersection of $\mathcal{E}'$ with the images of the tangent sheaves in $N_{B_3} V_3$ of these new line conditions. Then $\mathcal{E}$ is a coherent subsheaf of $N_{B_3} V_3$ which still restricts over $B_3 \setminus e$ to a vector subbundle of rank $r$ and has rank $\leq r$ over every point of $e$. By upper semi-continuity of the rank, since $\mathcal{E}$ is coherent, $\mathcal{E}$ is a subvector bundle of $N_{B_3} V_3$ of rank $r$ everywhere. As $\PP(\mathcal{E})$ is an irreducible closed subset of $V_4$, a Zariski closure argument then shows that it is contained in $L^{\ell}_4$ for every $\ell$, so it is contained in the intersection of all line conditions in $V_4$. Nevertheless, by construction $\PP(\mathcal{E})$ also contains the intersection of some (and hence of all) line conditions in $V_4$, proving equality.
    \end{proof}

	\subsection{Fifth Blow-up}
	Let $V_5 \coloneqq \Bl_{B_4}V_4$. 
    Denote with $E_5$ the exceptional divisor and let $\pi_5: V_5 \to V_4$ be the blow-up map.
    Let $\tilde{E_4}$ be the strict transform of $E_4$.
    
    \begin{lemma}\label{lemma:NB4V4iso}
    We have the isomorphism
    \begin{equation*}
        N_{B_4} E_4 \cong (\pi_4|_{B_4})^\ast(N_{B_3} V_3/\mathcal{E}) \otimes \mathcal{O}_{B_4}(1).
    \end{equation*}
    Moreover, over $U:=B_4 \setminus (\pi_4|_{B_4})^{-1}(e)$ the normal bundle $N_{B_4} E_4$ restricts to
    \begin{equation*}
        N_{U} E_4 \cong (\pi_4|_{U})^\ast\left(\frac{\Sym^3(W) \otimes \mathcal{O}(1,2)}{\Sym^2(W) \otimes \mathcal{O}(1,1)} \right) \otimes \mathcal{O}_{U}(1),
    \end{equation*}
    where $\mathcal{O}(a,b)$ denotes the pullback to $\PP^n \times \PP^n \setminus \Delta$. In particular, the fiber of $N_{B_4} E_4$ over some point of $B_4 \setminus \pi_4^{-1}(e)$ mapping to $[\lambda \mu^2] \in B_3 \setminus e$ is given by $\Sym^3(W)/(\mu \cdot \Sym^2(W))$.
    \end{lemma}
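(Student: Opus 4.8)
The plan is to deduce the first isomorphism from the standard normal-bundle formula for a projective subbundle, and then to compute the quotient $N_{B_3}V_3/\mathcal{E}$ explicitly over the locus $U$. For the first isomorphism: by Proposition~\ref{prop:vector_bundle} we have $B_4 = \PP(\mathcal{E})$ with $\mathcal{E} \subseteq N_{B_3}V_3$ a subvector bundle, while $E_4 = \PP(N_{B_3}V_3)$ is the exceptional divisor of $V_4 = \Bl_{B_3}V_3$ and $\pi_4|_{E_4}$ is its bundle projection to $B_3$. I would apply \cite[Proposition~9.13]{eisenbud_harris_2016} to the inclusion $\PP(\mathcal{E}) \subseteq \PP(N_{B_3}V_3)$ of projective bundles over $B_3$, which yields $N_{B_4}E_4 \cong (\pi_4|_{B_4})^\ast(N_{B_3}V_3/\mathcal{E}) \otimes \mathcal{O}_{B_4}(1)$, where $\mathcal{O}_{B_4}(1)$ is the restriction to $B_4$ of the relative tautological line bundle $\mathcal{O}_{E_4}(1)$. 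This is the same mechanism already used in Lemma~\ref{lem:iden} one blow-up earlier.

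For the restriction to $U$, the first step is to identify $N_{B_3}V_3/\mathcal{E}$ over $B_3 \setminus e$. Since the centers $B_0, B_1, B_2$ all lie over $B_0 \subseteq V_0$, the composite blow-down $V_3 \to V_0$ is an isomorphism over $V_0 \setminus B_0$; it carries $B_3 \setminus e$ isomorphically onto $S_0 \setminus B_0 \cong (\PP^n \times \PP^n)\setminus\Delta$ and identifies $N_{B_3}V_3|_{B_3 \setminus e}$ with $N_{S_0}V_0|_{S_0 \setminus B_0}$, whose fiber over $[\lambda\mu^2]$ is $T_{[\lambda\mu^2]}\PP(\Sym^3(W))/T_{[\lambda\mu^2]}S_0$. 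By Proposition~\ref{prop:vector_bundle}, $\mathcal{E}$ restricts over this locus to the subbundle with fiber $T_{[\lambda\mu^2]}R_\mu/T_{[\lambda\mu^2]}S_0$, so the quotient $N_{B_3}V_3/\mathcal{E}$ restricts there to the normal bundle $N_{R_\mu}\PP(\Sym^3(W))$.

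It then remains to compute this normal bundle as a bundle on $(\PP^n \times \PP^n)\setminus\Delta$, which I would do exactly as in Lemmas~\ref{lemma:embedding_normal_bundles} and~\ref{lem:iden}. Identifying $R_\mu$ with $\mu \cdot \PP(\Sym^2(W))$ and the point $[\lambda\mu^2]$ with the quadric $q = \lambda\mu$, the Euler sequences give $T_{[\mu q]}\PP(\Sym^3(W)) = (\Sym^3(W)/\langle\mu q\rangle)\otimes\langle\mu q\rangle^\vee$ and $T_{[\mu q]}R_\mu = (\mu\Sym^2(W)/\langle\mu q\rangle)\otimes\langle\mu q\rangle^\vee$, so the normal space is $(\Sym^3(W)/\mu\Sym^2(W))\otimes\langle\mu q\rangle^\vee$. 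On $(\PP^n \times \PP^n)\setminus\Delta$, with first factor $[\lambda]$ and second factor $[\mu]$, the line $\langle\mu q\rangle = \langle\mu\rangle^{\otimes 2}\otimes\langle\lambda\rangle$ has bidegree $(-1,-2)$, and the inclusion $\mu\Sym^2(W) \subseteq \Sym^3(W)$ globalizes to $\Sym^2(W)\otimes\mathcal{O}(0,-1)\hookrightarrow\Sym^3(W)\otimes\mathcal{O}$; twisting by $\langle\mu q\rangle^\vee = \mathcal{O}(1,2)$ turns the cokernel into that of $\Sym^2(W)\otimes\mathcal{O}(1,1)\hookrightarrow\Sym^3(W)\otimes\mathcal{O}(1,2)$. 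Pulling back along $\pi_4|_U$ and tensoring with $\mathcal{O}_U(1)$, as dictated by the first isomorphism, gives the displayed formula for $N_U E_4$; the closing claim about the fiber follows because tensoring by a line bundle leaves the underlying vector space unchanged, so the fiber is $\Sym^3(W)/(\mu\cdot\Sym^2(W))$.

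I expect the main obstacle to be precisely this last bookkeeping of the bidegree twists — keeping the roles of $\lambda$ and $\mu$ separate and correctly tracking the extra $\langle\mu q\rangle^\vee$ that arises in passing between the affine-cone description and the projective tangent spaces. I would cross-check the outcome against the analogous one-blow-up-earlier computation in Lemma~\ref{lem:iden}, where the same type of twist $\Sym^3(W)\otimes\mathcal{O}(3)/\Sym^2(W)\otimes\mathcal{O}(2)$ over a single copy of $\PP(W)$ appears.
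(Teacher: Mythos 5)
Your proposal is correct and follows essentially the same route the paper indicates: the first isomorphism via the normal-bundle formula for a projective subbundle \cite[Proposition~9.13]{eisenbud_harris_2016} applied to $\PP(\mathcal{E}) \subseteq \PP(N_{B_3}V_3)$, and the identification over $U$ via the Euler-sequence computation of $N_{R_\mu}\PP(\Sym^3(W))$ with the bidegree twists, exactly as in Lemma~\ref{lem:iden}. The paper's own proof is only the remark that it is ``similar to that of Lemma~\ref{lem:iden},'' so your write-up in fact supplies more detail (in particular the reduction of $N_{B_3}V_3/\mathcal{E}$ over $B_3 \setminus e$ to $N_{R_\mu}\PP(\Sym^3(W))$ using Proposition~\ref{prop:vector_bundle}) while remaining faithful to the intended argument.
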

    
    The proof is similar to that of Lemma~\ref{lem:iden}. We can now start to understand the intersection of all line conditions inside $V_4$.
    
    \begin{lemma} Fix a line $\ell$ of $\PP^n$ and a cubic $\lambda\mu^2$ such that $\ell$ does not intersect $\lambda\cap \mu$. The strict transform $L_5^{\ell}$ in $V_5$ contains a point $p$ in $E_5\cap \tilde{E}_4$ with $(\pi_4 \circ \pi_5)(p)= [\lambda\mu^2]$ if and only if the line $\ell$ intersects the cubic on $\mu$ associated with $p$, i.e. the element of $\Sym^3(W)/(\mu \cdot \Sym^2(W))$.
    \end{lemma}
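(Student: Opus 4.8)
The plan is to reduce the statement to a computation of the normal bundle $N_{B_4}V_4$ along the fibre over $[\lambda\mu^2]$, identified via Lemma~\ref{lem:lemma0.1}. First I would record that, since $\ell$ misses $\lambda\cap\mu$, we have $\lambda\neq\mu$, so $[\lambda\mu^2]\in S_0\setminus B_0$ lifts to a point of $B_3\setminus e$ where $\pi_1,\pi_2,\pi_3$ are isomorphisms; moreover $\ell$ lies in neither hyperplane and the points $\ell\cap\mathcal V(\lambda)$ and $\ell\cap\mathcal V(\mu)$ are distinct, so $\lambda\mu^2$ meets $\ell$ with multiplicity exactly $2$ at $\ell\cap\mathcal V(\mu)$. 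By Lemma~\ref{lem:lemma0.1}(i) and (iii), $L^\ell$ is then smooth at $[\lambda\mu^2]$ with $T_{[\lambda\mu^2]}L^\ell=\{h\in\Sym^3(W):h(\ell\cap\mathcal V(\mu))=0\}$. Hence near $[\lambda\mu^2]$ the hypersurface $L_3^\ell$ is smooth and contains $B_3=S_3$, so $L_3^\ell\cap E_4=\PP(N_{B_3}L_3^\ell)$ for a corank-one subbundle $N_{B_3}L_3^\ell\subseteq N_{B_3}V_3$, and $\mathcal E\subseteq N_{B_3}L_3^\ell$ by the construction in Proposition~\ref{prop:vector_bundle}; in particular $B_4=\PP(\mathcal E)\subseteq L_4^\ell$, and $L_4^\ell$, as the strict transform of the smooth $L_3^\ell$ through the smooth centre $B_3$, is smooth along the fibre of $B_4$ over $[\lambda\mu^2]$.

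Next I would compute the intersection with $E_5\cap\tilde{E}_4$. Since $L_4^\ell$ is a smooth hypersurface containing the smooth centre $B_4$, locally over that fibre $L_5^\ell$ is the blow-up of $L_4^\ell$ along $B_4$, whence $L_5^\ell\cap E_5=\PP(N_{B_4}L_4^\ell)$ inside $E_5=\PP(N_{B_4}V_4)$; similarly $\tilde{E}_4\cap E_5=\PP(N_{B_4}E_4)$. The smooth divisors $L_4^\ell$ and $E_4$ both contain $B_4$ and meet along $L_4^\ell\cap E_4=\PP(N_{B_3}L_3^\ell)$ in the expected codimension $2$, hence transversally along $B_4$, so
\[
L_5^\ell\cap E_5\cap\tilde{E}_4=\PP\big(N_{B_4}L_4^\ell\cap N_{B_4}E_4\big)=\PP\big(N_{B_4}(L_4^\ell\cap E_4)\big),
\]
and applying \cite[Proposition~9.13]{eisenbud_harris_2016} to $\mathcal E\subseteq N_{B_3}L_3^\ell\subseteq N_{B_3}V_3$ (exactly as in the proofs of Lemma~\ref{lem:iden} and Lemma~\ref{lemma:NB4V4iso}) gives
\[
N_{B_4}(L_4^\ell\cap E_4)=(\pi_4|_{B_4})^{\ast}\big(N_{B_3}L_3^\ell/\mathcal E\big)\otimes\mathcal O_{B_4}(1)\ \subseteq\ (\pi_4|_{B_4})^{\ast}\big(N_{B_3}V_3/\mathcal E\big)\otimes\mathcal O_{B_4}(1)=N_{B_4}E_4 .
\]

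Finally I would read this off fibrewise. By the identification of tangent spaces used in Proposition~\ref{prop:vector_bundle}, over $[\lambda\mu^2]\in B_3\setminus e$ one has $N_{B_3}V_3|_{[\lambda\mu^2]}\cong\Sym^3(W)/T_{[\lambda\mu^2]}S_0$, $\mathcal E|_{[\lambda\mu^2]}\cong T_{[\lambda\mu^2]}R_\mu/T_{[\lambda\mu^2]}S_0$ with $T_{[\lambda\mu^2]}R_\mu=\mu\cdot\Sym^2(W)$, and $N_{B_3}L_3^\ell|_{[\lambda\mu^2]}\cong T_{[\lambda\mu^2]}L^\ell/T_{[\lambda\mu^2]}S_0$, so that
\[
\big(N_{B_3}L_3^\ell/\mathcal E\big)\big|_{[\lambda\mu^2]}\cong\{h\in\Sym^3(W):h(\ell\cap\mathcal V(\mu))=0\}\big/\mu\Sym^2(W)
\]
is a hyperplane in $\big(N_{B_3}V_3/\mathcal E\big)|_{[\lambda\mu^2]}\cong\Sym^3(W)/\mu\Sym^2(W)$. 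As tensoring by the one-dimensional space $\mathcal O_{B_4}(1)|_p$ leaves projectivisations unchanged, Lemma~\ref{lemma:NB4V4iso} identifies the fibre of $E_5\cap\tilde{E}_4$ over $[\lambda\mu^2]$ with $\PP(\Sym^3(W)/\mu\Sym^2(W))$, i.e.\ with the cubics on the hyperplane $\mathcal V(\mu)$ — a point $p$ being the cubic $C_p$ — while the fibre of $L_5^\ell$ there is exactly the projectivisation of the hyperplane of those cubics vanishing at the point $\ell\cap\mathcal V(\mu)$ (well defined because $\mu$ vanishes there). Thus $p\in L_5^\ell$ if and only if $C_p$ contains $\ell\cap\mathcal V(\mu)$, i.e.\ if and only if $\ell$ intersects $C_p$, which is the claim.

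The step I expect to be the main obstacle is the identity $L_5^\ell\cap E_5\cap\tilde{E}_4=\PP(N_{B_4}(L_4^\ell\cap E_4))$: one has to control the interaction of the fifth blow-up simultaneously with the divisor $E_4$ and the centre $B_4$, and in particular verify the transversality of $L_4^\ell$ and $E_4$ along $B_4$ near the relevant fibre. As a cross-check one can instead work in the explicit charts: for $\mu=\mathcal V(x_0)$, $\lambda=\mathcal V(x_0+x_1)$ and $\ell=\mathcal V(x_2,\dots,x_n)$ one computes $L_3^\ell=\{d_b+4=0\}$ in the chart of Coordinates~III and $L_4^\ell=\{e_d=0\}$ in the chart of Coordinates~IV, and then tracks the coordinate $e_d$ through the fifth blow-up, the identification of $e_d$ with evaluation at $\ell\cap\mathcal V(\mu)$ being precisely the content of Lemma~\ref{lemma:NB4V4iso}.
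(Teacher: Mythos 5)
Your proposal is correct and follows essentially the same route as the paper's proof: both reduce the statement to the identification $L_5^\ell\cap\tilde{E}_4\cap E_5=\PP\big(N_{B_4}(L_4^\ell\cap E_4)\big)$, compute $N_{B_4}(L_4^\ell\cap E_4)\cong(\pi_4|_{B_4})^{\ast}(N_{B_3}L_3^\ell/\mathcal{E})\otimes\mathcal{O}_{B_4}(1)$, and then read off the fibre over $[\lambda\mu^2]$ using Lemma~\ref{lem:lemma0.1}(iii) for $T_{[\lambda\mu^2]}L^\ell$ and the description of $\mathcal{E}|_{[\lambda\mu^2]}$ as the quotient of the cubics containing $\mu$. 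Your write-up merely supplies more detail than the paper at the two points it leaves implicit (the smoothness of $L^\ell$ at $[\lambda\mu^2]$ and the transversality of $L_4^\ell$ and $E_4$ along $B_4$).
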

    
    \begin{proof}
    By assumption, $L_3^{\ell}$ and its proper transforms are smooth at every point over $[\lambda \mu^2] \in B_3$. We have $(L_5^{\ell} \cap \tilde{E}_4 \cap E_5)|_{\pi_5(p)} = \PP(N_{B_4} (L_4 \cap E_4)|_{\pi_5(p)})$. Since $L_4 \cap E_4|_{U} = \PP(N_{B_3} L_3|_{U})$ on the smooth locus $U$ of $L_3^{\ell}$ inside $B_3$, we have the canonical isomorphisms
    \begin{equation*}
        N_{B_4}(L_4^{\ell}\cap E_4)|_{\pi_5(p)} \cong ((\pi_4|_{B_4})^\ast(N_{B_3} L_3^{\ell}/\mathcal{E}) \otimes \mathcal{O}_{B_4}(1))|_{\pi_5(p)} \cong (N_{B_3} L_3^{\ell}/\mathcal{E})|_{[\lambda \mu^2]}.
    \end{equation*}
    Knowing that $T_{[\lambda \mu^2]}L_3^{\ell}$ is given by those cubics containing $\ell \cap \mu$ by Lemma~\ref{lem:lemma0.1}(iii) and that the fiber of $\mathcal{E}$ at $[\lambda \mu^2]$ is the quotient of the cubics containing $\mu$ by the tangent space of $B_3$ at $[\lambda\mu^2]$,  we conclude that the projective fiber of this bundle over the point  $\pi_5(p)$ of $B_4$ is exactly given by those cubics on $\mu$ touching $\ell$.
    \end{proof}
    
    \begin{lemma}\label{lem:lineCondE4}
    There exists a point $[\lambda\mu^2]$ with $\lambda\neq\mu$ in $B_3$ such that for every point $\overline{\lambda\mu^2}$ in $B_4$ with $\pi_3(\overline{\lambda\mu^2}) = [\lambda\mu^2]$ the intersection of the line conditions in the fiber $(E_5)|_{\overline{\lambda\mu^2}}$ is contained in the proper transform $\tilde{E}_4$ of $E_4$ in $V_5$.
    \end{lemma}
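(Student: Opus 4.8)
The plan is to combine two ingredients: a hyperplane description of the line conditions in the fibre $(E_5)|_{\overline{\lambda\mu^2}}$, most of which is already supplied by the preceding lemma, and an explicit computation of the one remaining coefficient, generalizing Aluffi's. Fix $[\lambda\mu^2]\in B_3$ with $\lambda\neq\mu$ (so $[\lambda\mu^2]\in B_3\setminus e$), put it and a chosen point $\overline{\lambda\mu^2}\in B_4$ over it into standard position using the $\PGL_n$-action, and fix a general line $\ell\subseteq\PP^n$; in particular $\ell$ meets neither $\lambda\cap\mu$ nor is contained in $\lambda$ or $\mu$, so $\lambda\mu^2$ meets $\ell$ in exactly two points, with multiplicity $2$ at $\ell\cap\mu$. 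By Lemma~\ref{lem:lemma0.1}(i) the line condition $L_3^{\ell}$ is then smooth near $[\lambda\mu^2]$, and by Proposition~\ref{prop:vector_bundle} it contains $B_3$, with multiplicity $1$; hence its strict transforms $L_4^{\ell}=\Bl_{B_3}L_3^{\ell}$ and $L_5^{\ell}=\Bl_{B_4}L_4^{\ell}$ are smooth near $\overline{\lambda\mu^2}$ and near $(E_5)|_{\overline{\lambda\mu^2}}$ respectively, being iterated blow-ups of a smooth variety along smooth centers contained in it (here $B_4\subseteq L_4^{\ell}$, again by Proposition~\ref{prop:vector_bundle}). Consequently $L_5^{\ell}\cap(E_5)|_{\overline{\lambda\mu^2}}=\PP\bigl(N_{B_4}L_4^{\ell}|_{\overline{\lambda\mu^2}}\bigr)$ is a hyperplane $H_{\ell}$ in the fibre $\PP^r\coloneqq(E_5)|_{\overline{\lambda\mu^2}}=\PP\bigl(N_{B_4}V_4|_{\overline{\lambda\mu^2}}\bigr)$.

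Next I localize $\bigcap_{\ell}H_{\ell}$. By Lemma~\ref{lemma:NB4V4iso} the hyperplane $\tilde E_4\cap(E_5)|_{\overline{\lambda\mu^2}}=\PP\bigl(N_{B_4}E_4|_{\overline{\lambda\mu^2}}\bigr)$ is canonically $\PP\bigl(\Sym^3(W)/(\mu\cdot\Sym^2(W))\bigr)$, the projective space of cubics on $\mu$, and the preceding lemma says precisely that $H_{\ell}\cap\bigl(\tilde E_4\cap(E_5)|_{\overline{\lambda\mu^2}}\bigr)$ is the hyperplane of cubics on $\mu$ vanishing at $\ell\cap\mu$. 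As $\ell$ ranges over general lines, $\ell\cap\mu$ ranges over a dense subset of $\mu$, and the only cubic on $\mu$ vanishing at every point of $\mu$ is zero; hence $\bigcap_{\ell}H_{\ell}$ meets the hyperplane $\tilde E_4\cap(E_5)|_{\overline{\lambda\mu^2}}$ in the empty set. Being a linear subspace of $\PP^r$ disjoint from a hyperplane, $\bigcap_{\ell}H_{\ell}$ is either empty — in which case the lemma holds — or a single point $p^{\ast}$ lying off $\tilde E_4$, which must be excluded.

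To exclude $p^{\ast}$, choose coordinates $[z:w]$ on $\PP^r$ with $\{w=0\}=\tilde E_4\cap(E_5)|_{\overline{\lambda\mu^2}}$, so that $H_{\ell}=\{\,s_{\ell}\,z(\ell\cap\mu)+c_{\ell}w=0\,\}$, where $z\mapsto z(\ell\cap\mu)$ is evaluation of a cubic on $\mu$ at $\ell\cap\mu$, $s_{\ell}\neq0$, and $c_{\ell}\in\mathbbl{k}$; a common point $p^{\ast}=[z^{\ast}:1]$ would then be a cubic $z^{\ast}$ on $\mu$ with $z^{\ast}(\ell\cap\mu)=-c_{\ell}/s_{\ell}$ for every general line $\ell$. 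I would compute the transverse coefficient $c_{\ell}$ explicitly: substituting the chart coordinates of Coordinates~I--IV together with those of the $V_5$-chart obtained by blowing up $B_4$ — using that over $B_3\setminus e$ the bundle $\mathcal E$ is the explicit bundle $T_{[\lambda\mu^2]}R_{\mu}/T_{[\lambda\mu^2]}S_0$ identified in the proof of Proposition~\ref{prop:vector_bundle} — into the $\PGL_n$-transform of~\eqref{eq:lineCondition} and dividing by the appropriate power of the exceptional equation at each of the five blow-ups, one finds that $c_{\ell}/s_{\ell}$ equals, up to a fixed nonzero scalar, the value $\lambda(\ell\cap\mu)^2\,q(\ell\cap\mu)^2$, where $q$ is the quadric cofactor recorded by $\overline{\lambda\mu^2}\in B_4$ (equivalently, it is the leading, degree-$2$-in-$t$, coefficient of the discriminant of the binary form $\bigl(x_1(x_0x_1+t\,q)\bigr)\big|_{\ell}$). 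This depends on $\ell$ through a form of degree $6$ in $\ell\cap\mu\in\mu$, whereas $z^{\ast}(\ell\cap\mu)$ is a form of degree $3$; the two cannot agree on a dense set of points of $\mu$, a contradiction. Since $q$ enters the computation as a free parameter, the same argument applies to every point $\overline{\lambda\mu^2}$ over $[\lambda\mu^2]$, so $\bigcap_{\ell}\bigl(L_5^{\ell}\cap(E_5)|_{\overline{\lambda\mu^2}}\bigr)=\emptyset\subseteq\tilde E_4$ in all cases, which is the assertion.

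The main obstacle is this last computation of $c_{\ell}$: one has to propagate the strict transform of a line condition through all five charts, and the fifth center $B_4=\PP(\mathcal E)$ is not given explicitly in general — but on the locus $B_3\setminus e$ where everything takes place it is, so the computation, though lengthy, is elementary, and it is the higher-dimensional counterpart of the corresponding verification in Aluffi's thesis for $n=2$.
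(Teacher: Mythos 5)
Your reduction is sound as far as it goes, and it is a genuine reorganization of the paper's logic: you first use the preceding lemma to show that $\bigcap_{\ell}H_{\ell}$ meets the hyperplane $\tilde E_4\cap(E_5)|_{\overline{\lambda\mu^2}}$ in the empty set (the paper uses exactly this observation, but only afterwards, in Corollary~\ref{cor:empty}), and you correctly conclude that the intersection of the line conditions in the fibre is either empty or a single point $p^{\ast}$ off $\tilde E_4$. The gap is that everything then hinges on excluding $p^{\ast}$, and your argument for that is not a proof: the formula ``$c_{\ell}/s_{\ell}$ equals, up to a fixed nonzero scalar, $\lambda(\ell\cap\mu)^2\,q(\ell\cap\mu)^2$'' is asserted, not derived, and it is precisely the hard computation. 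Worse, the degree-$6$-versus-degree-$3$ contradiction you draw from it is not well posed. For the hyperplane $H_{\ell}=\{s_{\ell}\,z(\ell\cap\mu)+c_{\ell}w=0\}$ to be independent of the chosen homogeneous representative of the point $\ell\cap\mu$, the ratio $c_{\ell}/s_{\ell}$ must transform with the same weight (three) as the evaluation functional $z\mapsto z(\ell\cap\mu)$; a formula of weight six is therefore only consistent if $s_{\ell}$ itself varies with weight three in $\ell\cap\mu$ (plausibly $s_{\ell}\propto\lambda(\ell\cap\mu)^3$), in which case both sides of the putative identity $z^{\ast}(\ell\cap\mu)=-c_{\ell}/s_{\ell}$ have weight three and no contradiction follows from counting degrees. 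One would instead need a divisibility argument — e.g.\ that $q^2|_{\mu}$ is not divisible by $\lambda|_{\mu}$ because $[\mu q]\notin T_{[\lambda\mu^2]}S_0$ — and even stating that correctly requires noticing that $q(\ell\cap\mu)$ is not well defined as written, since $q$ is only determined modulo $\lambda\cdot W+\mu\cdot W$ while $\ell\cap\mu\notin\lambda$ in general. None of this is addressed.

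The paper avoids computing the transverse coefficient of a single line condition altogether. It fixes the one explicit point $P=[(x_0+x_2)^2(x_0+x_1+x_2)]\in B_3\setminus e$ (existence of one such point is all the lemma claims) and, for each point $(P,Q)$ of the fibre of $B_4$, exhibits explicit one-parameter families of line conditions $L^{j,t}$ and $L^{i,j,t}$ that are smooth at $(P,Q)$ and whose tangent hyperplanes there converge, as $t\to 0$, to $T_{(P,Q)}E_4$; which family works is decided by a case analysis on which coordinates of $Q$ are nonzero, and the last case is excluded because it would force $Q$ into $(TB_3)_P$. Since the intersection of the line conditions inside $(E_5)|_{(P,Q)}$ is contained in every one of these projectivized tangent hyperplanes, it lies in their limit $\tilde E_4\cap E_5$. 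If you want to salvage your route, you must actually carry out the chart computation you defer and then replace the degree count by the divisibility argument sketched above; as written, the decisive step is missing.
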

    
    \begin{proof}
    Consider the chart in $V_3$ given by $D(a_{(0,0,0)}) \cap D(b_{(0,1,1)}) \cap D(c_{(1,1,1)})\cap D(d_a)$.  We can now choose any point in $B_3\setminus e$; we will choose our favourite one $P:=[(x_2+x_0)^2(x_2+x_1+x_0)]$. Notice that this is indeed contained in the chart. 
    We will denote with $(P,Q)$ a point in the fiber  of $B_4$ over $P$, where $Q\in\PP (R_{(x_2+x_0)}/(TB_3)_{P})$ and where $R_{(x_2+x_0)}$ is the space of cubics which are divisible by $(x_2+x_0)$. Points $P_{\epsilon,q}$ in $R_{(x_2+x_0)}$ can be uniquely written up to constants as
    $P_{\epsilon,q}:=(x_2+x_0)q$
    in the projective coordinates $[q_{ij}]_{i,j\in[n]}$ of the quadric $q$ in $(n+1)$ variables. In this coordinates, the tangent space $(TB_3)_{P}$ is given by
    \begin{equation*}
    \begin{cases}
    q_{00}+q_{22}=q_{02}\\
    q_{0j}-q_{2j}=0\quad \hbox{ for all } j\neq{0,1,2},\\
    q_{ij}=0 \qquad\quad\;\,\hbox{ for all } i,j\neq{0,1,2}. 
    \end{cases}
    \end{equation*}
    Denoting 
    $\pi_P:R_{(x_2+x_0)}\setminus{(TB_3)_P}\to (B_4)_P$ the quotient map followed by the projectivization, every point $(P,Q)$ can be represented in a non-unique way as $\pi_P([q_{ij}]_{i,j\in[n]})$. We now want to show that for every point  $(P,Q)=\pi_P([q_{ij}]_{i,j\in[n]})$ there exists a sequence of line conditions $L_4^m$ in $V_4$ which are smooth at $(P,Q)$ and such that the hyperplanes $(TL_4^{m})_{(P,Q)}$ tends to $(TE_4)_{(P,Q)}$ as subvector spaces of $(TV_{4})_{(P,Q)}$. This proves the lemma, as the intersection of all line conditions in $(E_5)_{(P,Q)}$ will be the same as the intersection of all line condition and $(\tilde{E}_4)_{(P,Q)}$.

    Before choosing appropriate line conditions, let us compute the projective coordinates $[e_{d},e_{(0,1,2)},\dots,e_{(0,n,n)}, e_{(1,1,2)}, \dots,e_{(n,n,n)}]$ for $(P,Q)=\pi_P([q_{ij}]_{i,j\in[n]})$ as functions of $[q_{ij}]$. We get the following coordinates for the point $(P,Q)$:
    \begin{equation*}
        \begin{split}
        {e_d}&= 0, \\
        {e}_{(0,1,j)}&=0  \qquad\qquad\;\,\,\quad\hbox{ for }j\neq 0,1,\\
        {e}_{(0,2,2)} &=3(q_{02}-q_{00}-q_{22}),  \\
        {e}_{(0,j,j)} &=-3q_{jj} \qquad\;\;\,\quad\hbox{ for }j\neq 0,1,2,  \\
        {e}_{(0,2,j)}&=3(q_{0j}-q_{2j})\quad \hbox{ for }j\neq 0,1,2,\\
        {e}_{(0,i,j)}&=-3q_{ij}\qquad\quad\;\;\, \hbox{ for }i,j\neq 0,1,2,\\
        {e}_{(1,2,2)}&=\frac{3}{2}(q_{02}-q_{00}-q_{22}), \\
        \end{split}\qquad\qquad
        \begin{split}
        {e}_{(1,j,j)}&=-\frac{3}{2}q_{jj} \qquad\quad\,\hbox{ for }j\neq 0,1,2,  \\
        {e}_{(1,2,j)}&=\frac{3}{2}(q_{0j}-q_{2j}) \;\;\hbox{ for }j\neq 0,1,2,\\
        {e}_{(1,i,j)}&=-\frac{3}{2}q_{ij}  \hspace{1.1cm}\hbox{ for }i,j\neq 0,1,2,\\
        {e}_{(i,i,j)} &=0 \hspace{1.8cm}\hbox{ for }i,j\neq 0,1,\\
        {e}_{(j,j,j)} &=0 \hspace{1.8cm}\hbox{ for }j\neq 0,1.\\
        \end{split}
    \end{equation*}
    Notice that this makes sense as long as $[q_{ij}]_{i,j\in[n]}\notin
    (TB_3)_P$, which is the case we are interested in.
    
    We will use the notation $L^{j,t}$ for line conditions associated to the lines 
    \begin{equation*}
    \mathcal{V}(x_1+tx_j,x_2,\dots,\hat{x}_j,\dots, x_n).
    \end{equation*}
    The proper transform $L^{j,t}_3$ of these line conditions in the chosen affine chart for $V_3$ are given by 
    \begin{equation*}
       4(t^2+b_{(0,j,j)}-tb_{(0,1,j)})^3+(t^3-3t^2c_{(1,1,j)}-c_{(j,j,j)}+3tc_{(1,j,j)})^2d_b=0.
    \end{equation*}
    Notice that the line condition $L^{j,0}_3$ is singular at $P$ for any $j\neq 0,1$, but the line conditions $L^{j,t}_3$ for $t\neq 0$ are not, and therefore the proper transforms $L^{j,t}_4$  in $V_4$ are smooth at every point $(P,Q)\in B_4$.  
    Now consider the proper transform of such line condition in a chart of $V_4$ different from $D(e_d)$ with coordinate $e'$. Notice that we can do that because $e_d(P,Q)=0$ for every choice of $Q$. Since we are interested in the gradient of the equation evaluated on points in $B_4\subseteq E_4=\{e'=0\}$), we can just look at the gradient of the following equation:
    \par
    {\footnotesize
    \begin{align*}
    \big(12(t-c_{(1,1,j)})^4(e_{(0,j,j)}-t e_{(0,1,j)})+e_d(t-&c_{(1,1,j)})^6- 8(t-c_{(1,1,j)})^3(3te_{(1,j,j)}-e_{(j,j,j)})\big)+\\
    e'\big(12(t-c_{(1,1,j)})^2(e_{(0,j,j)}-te_{(0,1,j)}&)^2+2e_d(t-c_{(1,1,j)})^3(3te_{(1,j,j)}-e_{(j,j,j)})-\\
    4(3te_{(1,j,j)}&-e_{(j,j,j)})^2\big)=0.
    \end{align*}}
    \par
    \noindent If we look at partial derivatives $\partial_{y}$ with respect to variables $y\neq e'$ evaluated at the point $(P,Q)$, we have that $\frac{\partial_{y}}{t^a}=0$ for $a\in \{0,1,2\}$, and this follows from $c_{(1,1,j)}(P)=0$.
    If we look at partial derivatives $\partial_{e'}$ evaluated at the point $(P,Q)=\pi_P([q_{ij}]_{i,j\in[n]})$, this is given by
    \begin{equation*}
    12t^2(e_{(0,j,j)})^2-4(3te_{(1,j,j)})^2.
    \end{equation*}
    We see that the partial derivative $\frac{\partial_{e}}{t^2}$ is given by 
    \[ 12(e^2_{(0,j,j)}-3e^2_{(1,j,j)}).\]
    For $j=2$ then this becomes
    \[27(4(q_{02}-q_{00}-q_{22})^2-3(q_{02}-q_{00}-q_{22})^2)=27(q_{02}-q_{00}-q_{22})^2\]
    and the claim follows from this quantity being non-zero at the point $(P,Q)$. 
    
    Suppose instead $q_{02}-q_{00}-q_{22}=0$, then we can look at different line conditions assuming ${e}_{(0,2,2)}(P,Q)={e}_{(1,2,2)}(P,Q)=0$. 
    Take $L_4^{j,t}$ where $j\neq 2$. If we repeat the same reasoning everything remains the same but in the end we get that $\frac{\partial_{e'}}{t^2}$ is given by
    \[27(4q_{jj}^2-3q_{jj}^2)=27q_{jj}^2.\]
    Once again, we obtain the claim if this quantity is different from zero for our $(P,Q)$. If instead $c_{jj}=0$ for every $j\neq 2$, then we can look at different line conditions assuming $e_{(0,j,j)}(P,Q)=e_{(1,j,j)}(P,Q)=0$ for every $j$. 
    Let us denote with $L^{i,j,t}$ the line conditions associated to the lines $\mathcal{V}(x_1+tx_j,x_2,\dots,\hat{x}_j,\dots,\hat{x}_i,x_i+x_j, \dots,x_n)$ for $i,j\neq 0,1$. The proper transform $L^{i,j,t}_3$ of these line conditions in the chosen affine chart for $V_3$ are given by
    \[4F_{i,j,t}^3+G_{i,j,t}^2 d_b=0\] where \[F_{i,j,t}=t^2+b_{(0,j,j)}+b_{(0,i,i)}-b_{(0,i,j)}-tb_{(0,1,j)}+tb_{(0,1,i)}\] and 
    \begin{align*}
    G_{i,j,t}=&\ t^3+c_{(i,i,i)}+3tc_{(1,i,i)}+3t^2c_{(1,1,i)}+3c_{(i,j,j)}\\&-3c_{(i,i,j)}+3tc_{(1,j,j)}-3t^2c_{(1,1,j)}-3tc_{(1,i,j)}-c_{(j,j,j)}.
    \end{align*}
    If we now consider the proper transform of this line condition in a chart of $V_4$ different from ${e}_d\neq 0$, repeating a similar reasoning to before we can see that for partial derivatives $\partial_{y}$ with respect to variables $y\neq e'$ evaluated at the point $(P,Q)$, we have $\frac{\partial_{y}}{t^a}=0$ for $a\in \{0,1,2\}$, and this follows again from the fact that $c_{(1,1,i)}(P)=c_{(1,1,j)}(P)=0$ for our point $P$.
    If we look at partial derivatives $\partial_{e'}$ for the variable $e'$ evaluated at the point $(P,Q)=\pi_P([q_{ij}]_{i,j\in[n]})$, this is given by
    \begin{equation*}
    12t^2(e_{(0,i,j)})^2-4(3te_{(1,i,j)})^2.
    \end{equation*}
    But then we see that $\frac{\partial_{e'}}{t^2}$ is given by 
    \[ 12(e^2_{(0,i,j)}-3e^2_{(1,i,j)}).\]
    If $j=2$ then this becomes
    \[27(4(q_{0i}-q_{2i})^2-3(q_{0i}-q_{2i})^2)=27(q_{0i}-q_{2i})^2\]
    and we obtain the claim if this quantity is different from zero for our $(P,Q)$. If instead we also have $q_{0i}-q_{2i}=0$ for every $i$, then we can look at different line conditions. 
    Take $L^{i,j,t}$ where $j\neq i\neq \{0,1,2\}$. If we repeat the same reasoning everything remains the same but in the end we get that $\frac{\partial_{e'}}{t^2}$ is given by
    \[27(4q_{ij}^2-3q_{ij}^2)=27q_{ij}^2.\]
     Once again, we obtain the claim if this quantity is different from zero for our $(P,Q)$. Finally, if  $q_{ij}=0$ for every $ij$ as before, then this implies $[q_{ij}]_{i,j\in[n]}\in (TB_3)_{P}$, but this is not possible.
    This concludes the proof.
    \end{proof}
    
    \begin{cor}\label{cor:empty}
    The intersection of all line conditions in $V_5$ is empty.
    \end{cor}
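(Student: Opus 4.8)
The plan is to push the intersection onto the last exceptional divisor and then to argue fiber by fiber over the previous center. Since Proposition~\ref{prop:vector_bundle} identifies $\bigcap_{\ell}L_4^{\ell}$ with $B_4=\PP(\mathcal{E})$, on the open locus $V_5\setminus E_5\cong V_4\setminus B_4$ the proper transforms $L_5^{\ell}$ restrict to $L_4^{\ell}\setminus B_4$, which have empty common intersection. Hence $\bigcap_{\ell}L_5^{\ell}\subseteq E_5=\PP(N_{B_4}V_4)$, and it suffices to prove emptiness fiber by fiber over $B_4$. Composing the projection $E_5\to B_4$ with $\pi_4|_{B_4}\colon B_4\to B_3\cong \Bl_{\Delta}\PP^n\times\PP^n$, one may treat separately the points lying over $B_3\setminus e$ and those lying over the exceptional divisor $e$ of $\Bl_{\Delta}\PP^n\times\PP^n$.

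For a point $\overline{\lambda\mu^2}\in B_4$ lying over $[\lambda\mu^2]\in B_3\setminus e$ (so $\lambda\neq\mu$): the whole tower of blow-ups is $\PGL_n$-equivariant, each center $B_i$ being an intrinsically defined, hence invariant, subvariety, so the action lifts to every $V_i$ and permutes the line conditions via $L^{\ell}\mapsto L^{g\ell}$. Since $\PGL_n$ acts transitively on $(\PP^n\times\PP^n)\setminus\Delta$, the conclusion of Lemma~\ref{lem:lineCondE4} holds over \emph{every} point of $B_3\setminus e$: the intersection of the line conditions inside the fiber $(E_5)|_{\overline{\lambda\mu^2}}$ lies in $\tilde{E}_4$. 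It remains to intersect inside $\tilde{E}_4\cap E_5$. By Lemma~\ref{lemma:NB4V4iso} a point $p$ of this fiber corresponds to a nonzero class in $\Sym^3(W)/(\mu\cdot\Sym^2(W))$, \ie to a nonzero cubic form on the hyperplane $\mu$, and the lemma immediately preceding Lemma~\ref{lem:lineCondE4} says that $p\in L_5^{\ell}$ exactly when $\ell$ meets that cubic. As $n\geq2$, one can choose a line $\ell\not\subseteq\mu$ meeting $\mu$ at a point off the cubic and disjoint from $\lambda\cap\mu$, whence $p\notin L_5^{\ell}$; so the fiberwise intersection over $B_3\setminus e$ is empty.

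The remaining case, points of $E_5$ lying over $e$, is the main obstacle. The intrinsic description of $N_{B_4}E_4$ in Lemma~\ref{lemma:NB4V4iso} is only valid over $U=B_4\setminus(\pi_4|_{B_4})^{-1}(e)$, so over $e$ one cannot simply read off an associated cubic on a hyperplane. Instead I would repeat the explicit local computation of Lemma~\ref{lem:lineCondE4} centered at a point of $e$ --- for instance the class of $x_0^3$ together with its blow-up direction --- in the local coordinates of Coordinates~IV. The expectation is that the same mechanism applies: inspecting the partial derivatives of the local equations of the $L_5^{\ell}$ forces the fiberwise intersection into $\tilde{E}_4$ and, descending once more, into a further exceptional divisor, on which the cubics being tracked still move in a family of positive-dimensional image, so that once again some line avoids the relevant cubic on a hyperplane. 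The vanishing then spreads to all of $e$ by transitivity of the $\PGL_n$-action on $e$ (Lemma~\ref{lemma:PGL_transitive_on_e}). Combining the two strata yields $\bigcap_{\ell}L_5^{\ell}=\emptyset$.

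Thus the only input beyond Proposition~\ref{prop:vector_bundle} and the two lemmas directly preceding the corollary is a careful monomial-by-monomial analysis of the local equations of the line conditions over the ``triple-hyperplane'' stratum $e$, where no intrinsic normal-bundle description is at hand; once this is done, the emptiness follows formally.
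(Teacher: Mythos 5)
Your reduction to $E_5$ and your treatment of the stratum over $B_3\setminus e$ match the paper's argument: Lemma~\ref{lem:lineCondE4}, extended to all of $B_3\setminus e$ by $\PGL_n$-equivariance, pushes the fiberwise intersection into $\tilde{E}_4\cap E_5$, and the lemma identifying points of $\tilde{E}_4\cap E_5$ over $[\lambda\mu^2]$ with cubics on $\mu$ lets you kill the intersection by choosing a line missing the cubic. The problem is the stratum over $e$, which you explicitly leave as an ``expectation'' to be verified by a computation you do not carry out. That is a genuine gap: the step you yourself identify as the main obstacle is exactly the one you do not do, and nothing in your sketch guarantees that the proposed monomial-by-monomial analysis would terminate in the way you hope.

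The paper closes this gap without any new computation. In the chosen affine chart, $E_3$ is cut out by $c_a=0$ and $e=B_3\cap E_3$; by Remark~\ref{rmk:equB3} the equations of $B_3$ do not involve $c_a$, and by Remark~\ref{rmk:invlinecond} neither do the equations of the relevant line conditions. Hence the entire configuration (line conditions, $B_3$, therefore $B_4$ and the proper transforms in $V_4$ and $V_5$) is constant in the $c_a$-direction, so the fiberwise analysis over a point of $e$ is literally identical to the analysis over the point of $B_3\setminus e$ obtained by perturbing the $c_a$-coordinate --- which is the case you (and Lemma~\ref{lem:lineCondE4}) already handled. This is the same trick already used in the proofs of Proposition~\ref{prop:vector_bundle} and Proposition~\ref{prop:identify_E}. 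With this observation your argument becomes complete; without it, it is not yet a proof.
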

    
    \begin{proof}
    We need to show that the line conditions do not intersect in $E_5$. Thanks to Remark~\ref{rmk:invlinecond} and the fact that the equations in Remark \ref{rmk:equB3} do not involve the variable $c_a$, we can show it over fibers corresponding to points in $B_4\setminus (\pi_4|_{B_4})^{-1}(e)$. By the $\PGL_n$-action we can just look at one single fiber on a point $\overline{\lambda\mu^2}$ of $B_4$, with $\pi_4(\overline{\lambda\mu^2})=[\lambda\mu^2]$. The claim then follows from Lemma~\ref{lem:lineCondE4}.
    \end{proof}
    
    The previous lemma proves that line conditions separate in $V_5$ and that this space is a \textit{$1$--complete space of cubic hypersurfaces}.
    
    \subsection{Identifying the vector bundle $\mathcal{E}$ on $e$}
    We now give a more explicit description of the bundle $\mathcal{E}|_e$, which will be useful for understanding the total Chern class $c(\mathcal{E})$.
    
    \begin{lemma}\label{lemma:PGL_transitive_on_e}
        The natural action of $\PGL_n$ on the exceptional divisor $e \subseteq \Bl_{\Delta} \PP^n \times \PP^n$ is transitive.
    \end{lemma}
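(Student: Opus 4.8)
The plan is to identify the exceptional divisor $e$ with the projectivized tangent bundle $\PP(T\PP^n)$ and to recognize the $\PGL_n$-action in the statement as the one naturally induced by the action on $\PP^n$. Since $\Delta \subseteq \PP^n \times \PP^n$ is smooth, the exceptional divisor of its blow-up is $e = \PP(N_{\Delta}(\PP^n \times \PP^n))$, and the normal bundle of the diagonal is canonically $N_{\Delta}(\PP^n \times \PP^n) \cong T\PP^n$ (for instance via the first projection $\PP^n \times \PP^n \to \PP^n$, which restricts to an isomorphism $\Delta \cong \PP^n$); hence $e \cong \PP(T\PP^n)$. The diagonal $\PGL_n$-action on $\PP^n \times \PP^n$ preserves $\Delta$, so by functoriality of the blow-up it lifts to $\Bl_{\Delta}\PP^n \times \PP^n$ and to $e$, and by naturality of the normal bundle this lifted action corresponds, under the isomorphism above, to the standard $\PGL_n$-action on $\PP(T\PP^n)$ covering the action on the base $\PP^n$. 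Concretely, in the local coordinates from the proof of Lemma~\ref{lemma:liftofphi0}, where $e$ is cut out by $u_1 = 0$ and carries coordinates $(\mu_1,\dots,\mu_n,[s_1:\cdots:s_n])$, the factor $[\mu]$ records the base point in $\PP^n$ and $[s]$ the tangent direction.

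With this identification, transitivity reduces to two standard facts. First, $\PGL_n$ acts transitively on the base $\PP^n$: given two points of $e$ lying over $p_1, p_2 \in \PP^n$, an element of $\PGL_n$ carrying $p_1$ to $p_2$ reduces us to the case $p_1 = p_2 =: p$. Second, the stabilizer of $p$ in $\PGL_n$ acts transitively on the fiber $\PP(T_p\PP^n) \cong \PP^{n-1}$. To see this, choose homogeneous coordinates with $p = [1 : 0 : \cdots : 0]$ and work in the affine chart $\{x_0 \neq 0\}$ with coordinates $y_i = x_i/x_0$, so that $p$ is the origin and $T_p\PP^n$ has basis $\partial_{y_1}, \dots, \partial_{y_n}$. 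Any $(a_{ij}) \in \GL_n$ determines an element of $\PGL_n$ fixing $p$, namely $[x_0 : x_1 : \cdots : x_n] \mapsto [x_0 : \sum_j a_{1j} x_j : \cdots : \sum_j a_{nj} x_j]$, whose derivative at $p$ is the matrix $(a_{ij})$ acting on $T_p\PP^n$. Since $\GL_n$ acts transitively on $\mathbbl{k}^n \setminus \{0\}$, the induced action on $\PP(T_p\PP^n)$ is transitive. Composing the two steps moves any point of $e$ to any other, which is the assertion.

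The only part requiring a modicum of care is verifying that the identification $e \cong \PP(T\PP^n)$ is $\PGL_n$-equivariant; this is a formal consequence of the functoriality of the blow-up together with the naturality of the normal bundle construction, and once it is in hand the statement is a routine orbit computation with no genuine obstacle.
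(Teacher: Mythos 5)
Your proof is correct and follows essentially the same route as the paper: both identify $e$ with $\PP(N_{\Delta}(\PP^n\times\PP^n)) \cong \PP(T\PP^n)$ and then reduce to the standard linear-algebra fact that $\PGL_n$ acts transitively on pairs consisting of a point of $\PP^n$ and a tangent direction at it. The only cosmetic difference is that the paper produces the required group element in one step (an $A$ with $A\lambda=\mu$ carrying the difference of normal-vector representatives appropriately), whereas you factor the orbit computation into transitivity on the base followed by transitivity of the point stabilizer on the fiber $\PP(T_p\PP^n)$.
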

    
    \begin{proof}
    We have $e = \PP(N_{\Delta}\PP^n \times \PP^n) = \PP(T\Delta)$ where the isomorphism $N_{\Delta}\PP^n \times \PP^n \cong T\Delta$ is provided by any multiple of the difference of the differentials of the projections, e.g. $d\mathrm{pr}_1 - d\mathrm{pr}_2$. Fix now two points $[\lambda], [\mu] \in \Delta$ and two non-zero normal vectors $(v_1, v_2) \in N_{\Delta}\PP^n \times \PP^n|_{[\lambda]}$ and $(w_1, w_2) \in N_{\Delta}\PP^n \times \PP^n|_{[\mu]}$. These two normal vectors are represented by two curves $\mathbb{A}^1 \rightarrow \PP^n \times \PP^n, t \mapsto ([\lambda + t v_1],[\lambda + t v_2])$ and $t \mapsto ([\mu + t w_1],[\mu + t w_2])$, respectively. We then only need to find $A \in \PGL_n = \GL_{n+1}/\sim$ with $A\lambda = \mu$ and $A(v_1 - v_2) = w_1 - w_2$. Such a $A$ exists if $v_1 - v_2$ is not a multiple of $\lambda$ and $w_1 - w_2$ is not a multiple of $\mu$. Both conditions are satisfied by the requirement that $(v_1, v_2)$ and $(w_1, w_2)$ are both non-zero normal vectors.
    \end{proof}
    
    \begin{prop}\label{prop:identify_E}
	    We have $\mathcal{E}|_e \cong \Sym^2(T_{e/\Delta})$.
	\end{prop}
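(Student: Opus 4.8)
The plan is to exploit that $e$ is a homogeneous space. By the proof of Lemma~\ref{lemma:PGL_transitive_on_e}, the isomorphism $N_\Delta\PP^n\times\PP^n\cong T\Delta$ identifies $e=\PP(N_\Delta\PP^n\times\PP^n)$ with $\PP(T\PP^n)$, i.e.\ with the two-step flag variety $\mathrm{Fl}(1,2;W)$ of pairs $\ell\subseteq\Lambda$ with $\dim\ell=1$, $\dim\Lambda=2$; the structure map $p\colon e\to\Delta\cong\PP(W)$ records $\ell$. Let $\mathcal{T}_1\subseteq\mathcal{T}_2\subseteq W\otimes\OO_e$ be the tautological sub-bundles and $\mathcal{Q}_2=(W\otimes\OO_e)/\mathcal{T}_2$, so that $p^*\OO_{\PP(W)}(-1)=\mathcal{T}_1$, $T_{e/\Delta}\cong(\mathcal{T}_2/\mathcal{T}_1)^\vee\otimes\mathcal{Q}_2$ and $N_{e/B_3}\cong\mathcal{T}_1^\vee\otimes(\mathcal{T}_2/\mathcal{T}_1)$. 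Both sides of the asserted isomorphism are $\PGL_n=\PGL(W)$-equivariant: $\Sym^2(T_{e/\Delta})$ tautologically, and $\mathcal{E}|_e$ because $\mathcal{E}$ is constructed $\PGL_n$-equivariantly in Proposition~\ref{prop:vector_bundle} and $e$ is $\PGL_n$-stable in $B_3$. As an equivariant bundle on a homogeneous space $G/P$ is determined by the $P$-module on its fibre over the base point, it is enough to identify the fibres of $\mathcal{E}|_e$ and $\Sym^2(T_{e/\Delta})$ over a single flag $F_0=(\langle e_0\rangle\subseteq\langle e_0,e_1\rangle)$ as modules over the parabolic $P=\mathrm{Stab}(F_0)$; both have rank $\binom n2$, and on the target side the fibre is visibly $(\langle e_0,e_1\rangle/\langle e_0\rangle)^{\vee\otimes2}\otimes\Sym^2(W/\langle e_0,e_1\rangle)$.

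For $\mathcal{E}|_e$ I would start from the description of $\mathcal{E}$ away from $e$ in the proof of Proposition~\ref{prop:vector_bundle}: via $\phi_3$, over $[\lambda\mu^2]\in B_3\setminus e\cong\PP^n\times\PP^n\setminus\Delta$ one has $\mathcal{E}_{[\lambda\mu^2]}=T_{[\lambda\mu^2]}R_\mu/T_{[\lambda\mu^2]}S_0$, and a short computation with the Euler sequences of $\PP(W)$, $\PP(\Sym^2 W)$ and $\PP(\Sym^3 W)$ — using that $R_\mu=\PP(\mu\cdot\Sym^2 W)$ is linear — yields the canonical isomorphism
\[
\mathcal{E}|_{B_3\setminus e}\ \cong\ \mathrm{pr}_1^*\OO(1)\otimes\mathrm{pr}_2^*\OO(1)\otimes\Sym^2(Q),\qquad Q\coloneqq (W\otimes\OO)\big/\bigl(\mathrm{pr}_1^*\OO(-1)\oplus\mathrm{pr}_2^*\OO(-1)\bigr).
\]
By Proposition~\ref{prop:vector_bundle}, $\mathcal{E}$ is a sub-bundle of $N_{B_3}V_3$ over all of $B_3$; it is therefore the unique extension of the right-hand side, and $\mathcal{E}|_e$ is its flat limit. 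Saturating $\mathrm{pr}_1^*\OO(-1)\oplus\mathrm{pr}_2^*\OO(-1)$ inside $W\otimes\OO$ on $B_3=\Bl_\Delta\PP^n\times\PP^n$ gives a rank-$(n-1)$ bundle $\widetilde{Q}$ with $\widetilde{Q}|_{B_3\setminus e}=Q$ and $\widetilde{Q}|_e\cong\mathcal{Q}_2$, so the \emph{naive} limit of the displayed bundle is $\mathcal{T}_1^{\vee\otimes2}\otimes\Sym^2(\mathcal{Q}_2)$ — which is \emph{not} $\Sym^2(T_{e/\Delta})=(\mathcal{T}_2/\mathcal{T}_1)^{\vee\otimes2}\otimes\Sym^2(\mathcal{Q}_2)$, the two differing by the twist $N_{e/B_3}^{\vee\otimes2}=\bigl(\mathcal{T}_1\otimes(\mathcal{T}_2/\mathcal{T}_1)^\vee\bigr)^{\otimes2}$. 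The content of the proposition is precisely that the true flat limit $\mathcal{E}|_e$ differs from the naive limit by exactly this twist.

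Pinning down that twist is, I expect, the only genuine work, and it has to be done in the explicit blow-up coordinates. By Lemma~\ref{lemma:PGL_transitive_on_e} it suffices to compute the single fibre $\mathcal{E}|_{p_0}$ over the point $p_0\in e$ in the chart of Coordinates~III corresponding to $F_0$ (all coordinates zero except $d_b=-4$), as the intersection $\bigcap_\ell\mathrm{im}\bigl(\mathcal{T}L_3^\ell|_{B_3}\to N_{B_3}V_3\bigr)|_{p_0}$, using the shape $4f(b)^3+g(c)^2d_b=0$ of the line-condition equations (Remark~\ref{rmk:invlinecond}) together with the equations of $B_3$ (Remark~\ref{rmk:equB3}) in the coordinates of Coordinates~IV. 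The twist appears for a structural reason: near a point of $e$ the line conditions are tangent to the exceptional divisors $E_1,E_2,E_3$ (Lemmas~\ref{lem:tangency} and~\ref{lemma:S2intB2}, Proposition~\ref{prop:inters}) — a shadow of their multiplicity $2$ along the locus of triple hyperplanes (Lemma~\ref{lem:lemma0.1}) — so the leading term of $L_3^\ell$ along $E_4$ at $p_0$ is the perfect square $g(c)^2$, and it is this squaring that simultaneously makes $\mathcal{E}|_e$ a $\Sym^2$ and contributes the exponent $2$ to the twist. The computation should show that $\mathcal{E}|_{p_0}$ is cut out in $N_{B_3}V_3|_{p_0}$ by linear equations leaving free exactly the coordinate directions $e_{(0,\alpha,\beta)}$ with $2\le\alpha\le\beta\le n$ (the remaining $e$-coordinates being related to these as in the proof of Lemma~\ref{lem:lineCondE4}), and that this $P$-module is $(\langle e_0,e_1\rangle/\langle e_0\rangle)^{\vee\otimes2}\otimes\Sym^2(W/\langle e_0,e_1\rangle)$; the first paragraph then upgrades this to $\mathcal{E}|_e\cong\Sym^2(T_{e/\Delta})$ on all of $e$. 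Everything else — the equivariant reduction, the Euler-sequence computation off $e$ and the bookkeeping of tautological bundles — is routine; the local computation at $p_0$ is laborious but parallels (without coinciding with) the proof of Lemma~\ref{lem:lineCondE4}, which treats a point \emph{off} $e$.
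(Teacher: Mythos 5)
Your strategy is sound and, at its core, lands on the same single\nobreakdash-fiber computation as the paper, but the way you globalize it is genuinely different. The paper's proof manufactures an explicit bundle map: starting from $N_e B_1 \cong \Sym^2(T_{e/\Delta})$ it builds, via the chain of inclusions \eqref{eq:chain_inclusions} and a splitting $s$ of $T_{B_2/B_0}|_e$ (with a constant $\mathrm{cst}=\tfrac32$ pinned down in coordinates), an embedding $N_e B_1 \hookrightarrow N_{B_3}V_3|_e$, and then only has to check that its image coincides with $\mathcal{E}|_e$ over the single point $([x_0],[x_1])$, using Lemma~\ref{lemma:PGL_transitive_on_e} to propagate. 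You instead invoke the equivalence between equivariant bundles on the flag variety $e\cong\mathrm{Fl}(1,2;W)$ and $P$-representations, reducing to a comparison of fibres over one flag as $P$-modules. Your identification $\mathcal{E}|_{B_3\setminus e}\cong\mathrm{pr}_1^*\OO(1)\otimes\mathrm{pr}_2^*\OO(1)\otimes\Sym^2(Q)$ is correct (it follows from $\mathcal{E}([\lambda\mu^2])\cong T_{[\lambda\mu^2]}R_\mu/T_{[\lambda\mu^2]}S_0$ exactly as you say), your ``naive limit versus true limit differ by $N_{e/B_3}^{\vee\otimes 2}$'' bookkeeping is consistent, and the fibre you predict (the directions $e_{(0,\alpha,\beta)}$, $2\le\alpha\le\beta\le n$, with $e_{(1,\alpha,\beta)}$ proportional to them and everything else zero) is exactly what the paper's computation at $([x_0],[x_1])$ produces. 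What your route buys: you never need to construct $s$, and if your flat-limit picture were upgraded from a heuristic to an actual identification $\mathcal{E}\cong\widetilde{\mathcal{E}}_{\mathrm{naive}}\otimes\OO_{B_3}(-2e)$ inside $N_{B_3}V_3$, it would determine $c(\mathcal{E})$ on all of $B_3$ for every $n$ and remove the ambiguity of Remark~\ref{rmk:goeswrong} --- a genuine improvement over the paper, though you do not establish it.

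The one obligation your route carries that the paper's does not, and which you should not underestimate: the equivariant reduction needs the fibre $\mathcal{E}|_{p_0}$ as a \emph{$P$-module}, not merely as a $\binom{n}{2}$-dimensional subspace of $N_{B_3}V_3|_{p_0}$. The paper can stop at identifying the subspace because the isomorphism with $\Sym^2(T_{e/\Delta})$ is furnished by the a priori constructed map $s$; in your argument the isomorphism only exists once the $P$-weights match. Concretely, $\mathcal{T}_1^{\vee\otimes 2}\otimes\Sym^2(\mathcal{Q}_2)$ and $(\mathcal{T}_2/\mathcal{T}_1)^{\vee\otimes 2}\otimes\Sym^2(\mathcal{Q}_2)$ have the same fibre dimension and are distinguished precisely by how the torus and the unipotent radical of $P=\mathrm{Stab}(F_0)$ act, so you must push the $\PGL(W)$-linearization of $N_{B_3}V_3$ through the three blow-ups and read off the action on the $e_F$-coordinates at $p_0$ --- this is the step where the ``squaring of $g(c)$'' you point to actually enters, and it is additional coordinate work beyond locating the subspace. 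Once that is carried out (or replaced by the paper's device of an a priori bundle map), your proof closes.
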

	
	\begin{rmk}
	  The geometric intuition behind this proposition is as follows. The fiber of $\Sym^2(T_{e/\Delta})$ over a point $([\lambda],[g]) \in e$ is $\frac{\Sym^2(W/\lambda)}{g \cdot (W/\lambda)}$, the \emph{quadrics on $g$}. This makes much sense, given that over a point $[\lambda \mu^2] \in B_3 \setminus e$, the fiber of $\mathcal{E}$ is naturally identified with $\frac{\Sym^2(W)}{(\lambda \cdot W + \mu \cdot W)}$, the \emph{quadrics on $\lambda \cap \mu$}. Fixing $\lambda$, as $\mu$ approaches $\lambda$ along some curve, $\lambda \cap \mu$ can be seen as a sequence of hyperplanes inside $\lambda$ with some limiting hyperplane $g$ inside $\lambda$. Along this sequence, the quadrics on $\lambda \cap \mu$ should indeed approach the quadrics on $g$.
	\end{rmk}
	
	\begin{rmk}\label{rem:chernEbundle}
	    It follows from the relative Euler sequence of the projective bundle $e$ over $\Delta$ that
	    \begin{equation*}
	        \Sym^2(T_{e/\Delta}) \cong \frac{\pi_e^\ast \left( \Sym^2(T\Delta)\right) \otimes \mathcal{O}_e(2)}{\pi_e^\ast \left( T\Delta \right) \otimes \mathcal{O}_e(1)},
	    \end{equation*}
	    the total Chern class of which can be computed using the Chern classes of $T\Delta$.
	\end{rmk}
	
	\begin{proof}[Proof of Proposition~\ref{prop:identify_E}]
	From the relative Euler sequence for the projective bundles $e = \PP(T\Delta)$ and $B_1 = \PP(\Sym^2(T\Delta))$ we obtain
	\begin{equation*}
	N_e B_1 \cong \frac{T_{B_1/B_0}|_e}{T_{e/\Delta}} \cong \Sym^2(T_{e/\Delta}).
	\end{equation*}
	We now first give an embedding of $N_e B_1$ into $N_{B_3} V_3|_e$.
	In a second step we show that the image agrees with $\mathcal{E}|_e$. For the first step, we observe the chain of natural inclusions of geometric vector bundles
	\begin{equation}\label{eq:chain_inclusions}
	    T_{B_2/B_0}|_e \subseteq TB_2|_e \cong T\PP(N_{B_2} E_2)|_e \subseteq T\PP(N_{B_2} V_2)|_e = TE_3|_e \subseteq TV_3|_e,
	\end{equation}
	using in the first step that $B_2 = E_2 \cap \tilde{E_1}$, so $N_{B_2} E_2$ is a line bundle and therefore the restriction of $\pi_3$ is an isomorphism $\PP(N_{B_2} E_2) \cong B_2$. In order to embed $T_{B_1/B_0}|_e$ into $T_{B_2/B_0}|_e$, note that $B_2$ is actually a fiber product over $B_0$. To be precise, it follows from Lemma~\ref{lem:iden} that
	\begin{equation*}
	    B_2 = \PP(N_{B_1} E_1) \cong \PP(p_1^\ast(\Sym^3(T\Delta))) = \PP(\Sym^3(T\Delta)) \times_{B_0} B_1.
	\end{equation*}
	The restriction $p_2: B_2 \rightarrow B_1$ of $\pi_2$ agrees under this identification with the projection to the second factor. Under the natural identifications $B_0 = \Delta$ and $B_1 = \PP(\Sym^2(T\Delta))$, the inclusion $e \subseteq B_2$ corresponds to the map
	\begin{equation*}
	    e = \PP(T\Delta) \overset{(\nu_3,\nu_2)}{\longrightarrow} \PP(\Sym^3(T\Delta)) \times_{\Delta} \PP(\Sym^2(T\Delta)),
	\end{equation*}
	where $\nu_2$, $\nu_3$ denote the relative second and third Veronese embeddings. On the fiber over $[\lambda] \in \Delta = \PP(W)$, these map a linear form $[g] \in \PP(W/\lambda) = e|_{[\lambda]}$ to its second respectively third power. Consider now the following diagram (where we omit the pullback signs and identify $B_0 = \Delta$):
	\begin{equation*}
	    \begin{tikzcd}
	        &
	            &
	                & 0 \ar[d]
	                    & \\
	        & T_{B_2/B_0} \ar[rr, dashed] \ar[rd, hook] \ar[dd, dashed]
	            &
	                & T_{B_1/B_0} \ar[d]
	                    & \\
	       &
	            & TB_2 \arrow[dr, phantom, "\square"] \ar[r] \ar[d]
	                & TB_1 \ar[d]
	                    & \\
	   0 \ar[r]
	       & T_{\PP(\Sym^3(T\Delta)) / \Delta} \ar[r]
	            & T\PP(\Sym^3(T \Delta)) \ar[r]
	                & T\Delta = TB_0 \ar[r] \ar[d]
	                    & 0 \\
	       &
	            &
	                & 0
	                    &
	    \end{tikzcd}
	\end{equation*}
	The induced dashed maps provide an isomorphism $T_{B_2/B_0} \cong T_{\PP(\Sym^3(T\Delta))/\Delta} \oplus T_{B_1/B_0}$. We define the embedding
	\begin{center}
    	\begin{tikzcd}
    	    s: T_{B_1/B_0}|_e \arrow[r, hook] & T_{\PP(\Sym^3(T\Delta))/\Delta}|_e \oplus T_{B_1/B_0}|_e \cong T_{B_2/B_0}|_e
    	\end{tikzcd}
	\end{center}
	by prescribing it to be the identity on the second factor. On the first factor we define it via
	\begin{equation*}
	    \begin{tikzcd}
	    T_{B_1/B_0}|_e \ar[r, hook] \ar[d, "\cong"']
	        & T_{\PP(\Sym^3(T\Delta))/\Delta}|_e \ar[d, "\cong"] \\
	    \frac{\pi_e^\ast(\Sym^2(T\Delta)) \otimes \mathcal{O}_e(2)}{\mathcal{O}_e} \ar[r, hook]
	        & \frac{\pi_e^\ast(\Sym^3(T\Delta)) \otimes \mathcal{O}_e(3)}{\mathcal{O}_e},
	    \end{tikzcd}
	\end{equation*}
	given on the fiber over $([\lambda],[g]) \in e$ (i.e. $[\lambda] \in \Delta = \PP(W)$ and $g \in W/\lambda$) by sending a quadric $q \in \Sym^2(W/\lambda)/(g^2)$ to $\mathrm{cst} \cdot g \cdot q \in \Sym^3(W/\lambda)/(g^3)$, where $\mathrm{cst}$ is some non-zero constant still to be specified. (Up to multiplication by $\mathrm{cst}$, this is a relative version of the map $\alpha_{2,3}$ from Lemma~\ref{lemma:embedding_normal_bundles}.) Denoting by $e_1 \subseteq B_1$ and $e_2 \subseteq B_2$ the images of $\phi_1(e)$ and $\phi_2(e)$, respectively, it is important to observe that $s$ satisfies $s(T_{e_1/\Delta}) = T_{e_2/\Delta} \subseteq T_{B_2/B_0}|_{e_2}$. By composing with \eqref{eq:chain_inclusions}, the embedding $s: T_{B_1/B_0}|_e \hookrightarrow T_{B_2/B_0}|_e$ now provides an embedding of geometric vector bundles $T_{B_1/B_0}|_e \hookrightarrow TV_3|_e$. Composing further with the quotient map $TV_3|_e \to N_{B_3} V_3|_e$, the kernel is precisely $T_{e_1/B_0} \subseteq T_{B_1/B_0}|_{e_1}$. Hence, we obtain an embedding of geometric vector bundles
	\begin{center}
    	\begin{tikzcd}
    	    N_e B_1 \arrow[r, hook] & N_e E_3 \subseteq N_{B_3} V_3|_e.
    	\end{tikzcd}
	\end{center}
	We denote by $\mathcal{F} \subseteq N_e E_3 \subseteq N_{B_3} V_3|_e$ its image. It is enough to show $\PP(\mathcal{F}) = \PP(\mathcal{E}|_e) = B_4 \cap \pi_4^{-1}(e)$. As the embedding $N_e B_1 \hookrightarrow N_{B_3} V_3|_e$ is $\PGL_n$-equivariant, it is enough to show the equality $\PP(\mathcal{F}) = \PP(\mathcal{E}|_e)$ for the fiber over a single point of $e$, using that $\PGL_n$ acts transitively on $e$ by Lemma~\ref{lemma:PGL_transitive_on_e}. We pick the point $([\lambda], [g]) = ([x_0], [x_1]) \in e$. In the explicit coordinates of Subsection~\ref{subsec:fourblow}, the fiber of $B_4$ over this point is defined (in the affine chart where $e_{(0,1,2)} = 1$) by the equations
	\begin{alignat*}{2}
	    0 &= e', &&\\
	    0 &= e_d, &&\\
	    0 &= e_{(0,i,j)} - 2e_{(1,i,j)} \quad &&\text{for all } i,j>1, \\
	    0 &= e_F \quad &&\text{for all } F \neq (0,i,j), (1,i,j).
	\end{alignat*}
	This follows from the fact that the same equations hold for the fiber of $B_4$ over the point $[(x_0 + x_1 + x_2)(x_0 + x_2)^2] \in B_3 \setminus e$, see the proof of Lemma~\ref{lem:lineCondE4}. This point has the same $b_{(0,i,j)}$ and $c_{(i,j,k)}$ coordinates as $([x_0], [x_1]) \in e$. By Remark~\ref{rmk:equB3}, the equations for $B_3$ inside $V_3$ only depend on those, and the same is true for the equations of the line conditions from Remark~\ref{rmk:invlinecond}. Therefore, the fiber of $B_4$ over $([x_0], [x_1]) \in e$ is indeed defined by the same equations in the $(e_F, e_d, e')$-coordinates as the fiber over $[(x_0 + x_1 + x_2)(x_0 + x_2)^2] \in B_3 \setminus e$. Finally, the explicit description of the embedding $s: T_{B_1/B_0}|_e \hookrightarrow T_{B_2/B_0}|_e$ above provides a way to check that all points in the chart satisfying the above equations lie inside $\PP(\mathcal{F}|_{([x_0],[x_1])})$. Namely, if we start with a tangent vector associated to a quadric $q = \sum_{2 \leq i \leq j} q_{(i,j)} x_i x_j \in T_{B_1/B_0}|_{([x_0],[x_1])}$, it is represented by a curve in our usual affine open chart of $B_2$ given by sending $t \in \mathbb{A}^1$ to $b_{(0,i,j)} = q_{(i,j)} \cdot t$, $c_{(1,i,j)} = \frac{\mathrm{cst}}{3} \cdot q_{(i,j)} \cdot t$ for all $i,j>1$ and all other coordinates equal to $0$. Tracing the proper transform of this curve in $V_3$, we obtain that this tangent vector corresponds to the point in $E_4$ with coordinates $e' = e_d = 0$, $e_{(0,i,j)} = q_{(i,j)}, e_{(1,i,j)} = \frac{\mathrm{cst}}{3} \cdot q_{(i,j)}$ and all other $e_F = 0$. This satisfies the above equations exactly for the choice $\mathrm{cst} = \frac{3}{2}$. We get that $\PP(\mathcal{F}|_{([x_0],[x_1])})$ contains a dense open subset of $\PP(\mathcal{E}|_{([x_0],[x_1])})$ and hence the entire fiber. As their dimensions agree, we obtain equality, and with this we conclude that $\PP(\mathcal{F}) = \PP(\mathcal{E}|_e)$.
	\end{proof}

	\section{Intersection rings and Chern classes}\label{sec:3}
	
	In the following subsections we collect details about the Chow rings of the centers of the blow-ups. This is the last step needed to compute the characteristic numbers. In particular, we find generators, describe the degree of the product of those generators, and find the Chern classes $c(N_{B_i}V_i)$ of the normal bundle of $B_i$ inside $V_i$.
	Finally, we compute the \textit{full intersection classes} $B_i\circ P_i$ and $B_i\circ L_i$, which are defined in \cite[Section 2]{Aluffi:1990}.
	\begin{rmk}\label{rmk:intersClasses}
	For $X\subseteq V_i$ being a divisor and $j_i:B_i\hookrightarrow{} V_i$, we have \begin{equation}
	     B_i \circ X = e_{B_i}X[B_i] + j_i^*[X]
	\end{equation}
	where $e_{B_i}X$ denotes the multiplicity of $X$ along $B_i$.
	\end{rmk}
	\setcounter{subsection}{-1}
	\subsection{Chow ring of $\boldsymbol{B_0}$}
	The following results directly generalize from \cite{Aluffi:1990}:
	\begin{lemma} The intersection ring of $B_0\simeq\PP^n$ is generated by the hyperplane class $h$.  Moreover, $c(N_{B_0}V_0)=(1+3h)^{\binom{n+3}{3}}/(1+h)^{n+1}$.
	\end{lemma}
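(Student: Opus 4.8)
The first assertion is essentially a citation: $B_0 \cong \PP^n$, and the Chow ring of projective $n$-space is $\Z[h]/(h^{n+1})$, so it is generated as a ring by the hyperplane class $h$. I would just record this and move on.

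For the Chern class computation the plan is to use the normal bundle sequence of the embedding $j_0 = \nu_3\colon B_0 = \PP(W) \hookrightarrow V_0 = \PP(\Sym^3(W))$, the third (cubing) Veronese map $[\lambda] \mapsto [\lambda^3]$. From
\[
0 \longrightarrow T\PP^n \longrightarrow TV_0|_{B_0} \longrightarrow N_{B_0}V_0 \longrightarrow 0
\]
we get $c(N_{B_0}V_0) = c(TV_0|_{B_0}) / c(T\PP^n)$ by the Whitney sum formula. The denominator is $c(T\PP^n) = (1+h)^{n+1}$ by the Euler sequence on $\PP^n$. For the numerator, the Euler sequence on $V_0 = \PP^{N}$ with $N = \binom{n+3}{3}-1$ gives $c(TV_0) = (1+H)^{N+1} = (1+H)^{\binom{n+3}{3}}$, where $H$ is the hyperplane class on $V_0$. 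The key point is that $\nu_3^\ast \OO_{V_0}(1) = \OO_{\PP^n}(3)$, so $H$ restricts to $3h$ on $B_0$; hence $c(TV_0|_{B_0}) = (1+3h)^{\binom{n+3}{3}}$. Combining the two yields exactly
\[
c(N_{B_0}V_0) = \frac{(1+3h)^{\binom{n+3}{3}}}{(1+h)^{n+1}},
\]
the quotient being interpreted in $\mathrm{CH}^\ast(\PP^n) = \Z[h]/(h^{n+1})$ by inverting $(1+h)^{n+1}$ formally (legitimate since its constant term is $1$).

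There is essentially no obstacle here: the only thing to be careful about is the identification of the restriction of $\OO_{V_0}(1)$, i.e. that the embedding in question is the cubic Veronese (the locus $B_0$ of triple hyperplanes $[\lambda^3]$), which is already fixed by the construction in Subsection~\ref{subsec:V0}. Everything else is the standard computation of the normal bundle of a Veronese subvariety via the two Euler sequences, exactly as in the plane-cubic case of \cite{Aluffi:1990}.
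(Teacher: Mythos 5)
Your proof is correct and follows exactly the standard route the paper relies on: the paper itself gives no written proof (it only notes that the result ``directly generalizes'' from Aluffi), but the intended argument is precisely the normal bundle sequence combined with the two Euler sequences and the identification $\nu_3^\ast\mathcal{O}_{V_0}(1)\cong\mathcal{O}_{\PP^n}(3)$, the same ingredients that appear explicitly in the paper's proof of Lemma~\ref{lemma:embedding_normal_bundles}. Nothing to add.
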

	\begin{lemma} The full intersection classes of point and line conditions in $V_0$ with respect to $B_0$ are
		$$B_0\circ P=3h \qquad
		B_0\circ L=2+12h$$
	\end{lemma}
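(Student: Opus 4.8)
The plan is to apply the formula recalled in Remark~\ref{rmk:intersClasses}, namely
\[
  B_0 \circ X = e_{B_0}X \cdot [B_0] + j_0^*[X]
\]
for a divisor $X$ on $V_0$, to the cases $X = P$ and $X = L$. So I would first isolate three pieces of data: the divisor classes $[P], [L] \in \mathrm{CH}^1(V_0)$, the action of $j_0^*$ on Chow groups, and the multiplicities $e_{B_0}P$ and $e_{B_0}L$.

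First I would record that the point condition $P^p$ is a hyperplane of $V_0 = \PP(\Sym^3(W))$, so $[P] = H$ is the hyperplane class $c_1(\OO_{V_0}(1))$, whereas the line condition $L^\ell$ is a quartic hypersurface by Remark~\ref{rmk:PGL} (as exhibited by the explicit degree-$4$ equation \eqref{eq:lineCondition}), so $[L] = 4H$. Next, since $j_0$ is, up to the isomorphism $B_0 \simeq \PP^n$ from the preceding lemma, the third Veronese embedding $\nu_3$, one has $j_0^*\OO_{V_0}(1) = \OO_{\PP^n}(3)$, hence $j_0^* H = 3h$; therefore $j_0^*[P] = 3h$ and $j_0^*[L] = 12h$.

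Then I would compute the two multiplicities. For the line condition, every hyperplane of $\PP^n$ meets every line of $\PP^n$ in a single point with intersection multiplicity $3$, so Lemma~\ref{lem:lemma0.1}(ii) gives $e_{B_0}L = 2$. For the point condition, the scheme $B_0 \cap P^p$ is cut out on $B_0 \simeq \PP^n$ by the form $[\lambda] \mapsto \lambda(p)^3$, the cube of a linear form; in particular $B_0 \not\subseteq P^p$, so $e_{B_0}P = 0$ (consistently with $j_0^*[P] = 3h$ being three times the class of the hyperplane $\{\lambda(p) = 0\}$ in $B_0$).

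Substituting into the formula and identifying $[B_0] = [\PP^n]$ with $1 \in \mathrm{CH}_n(\PP^n)$ yields
\[
  B_0 \circ P = 0 \cdot [B_0] + 3h = 3h, \qquad B_0 \circ L = 2[B_0] + 12h = 2 + 12h,
\]
as claimed. I do not expect any real obstacle: the argument is bookkeeping built on the degree of the line condition, the identification of $j_0$ with $\nu_3$, and the multiplicity-$2$ statement of Lemma~\ref{lem:lemma0.1}(ii), all already available.
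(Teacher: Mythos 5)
Your proof is correct and follows exactly the route the paper intends: the paper states this lemma without proof as a direct generalization of Aluffi's plane-cubic computation, and its proofs of the analogous lemmas for $B_2$, $B_3$, $B_4$ use precisely your bookkeeping (degree of the condition, pullback along the Veronese $j_0^*H = 3h$, and the multiplicity from Remark~\ref{rmk:intersClasses} together with Lemma~\ref{lem:lemma0.1}(ii)). The only tiny imprecision is the claim that \emph{every} hyperplane meets \emph{every} line in a single triple point — this fails when $\ell \subseteq \lambda$ — but since $e_{B_0}L$ is read off at the generic point of $B_0$, this does not affect the argument.
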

	
	\subsection{Chow ring of $\boldsymbol{B_1}$} 
	The center of the second blow-up is $B_1$, this was described in subsection \ref{subsec:oneblow}.
	\begin{lemma}\label{lem:onechow} The variety $B_1$ has dimension $\binom{n+2}{2}-2$.
	\begin{enumerate}
    \item The intersection ring of $B_1$ is generated by the pullback $h$ of $h$ via the map $\pi_1:B_1\to B_0$ and the pullback $\epsilon$ of $[E_1]$ via the inclusion map $j_1:B_1\hookrightarrow V_1$. Consider the sequence $\{a_s\}$ obtained with the following recursion     \[\begin{cases} a_0=1\\a_s=\binom{n+1}{s}-\displaystyle\sum_{i=1}^{s}2^i\binom{\dim B_1+2}{i}a_{s-i}.\end{cases}\]
     We get
    \begin{alignat*}{2}
    \int_{B_1} h^{s}\epsilon^{\dim B_1-s}&=0
    \qquad&& \hbox{ for } s\in [n+1,\dim B_1],\medskip\\
    \int_{B_1} h^{s}\epsilon^{\dim B_1-s}&=(-1)^{\dim B_1-s}a_{n-s}
    \qquad&& \hbox{ for } s\in[0,n].
    \end{alignat*}

    \item $c(N_{B_1}V_1)=(1+\epsilon)(1+3h-\epsilon)^{\binom{n+3}{3}}/(1+2h-\epsilon)^{\binom{n+2}{2}}$
    \end{enumerate}
    \end{lemma}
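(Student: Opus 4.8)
The plan is to realize $B_1$ as a projective bundle over $B_0\cong\PP^n$ and to extract both parts of the statement from the projective bundle formula, the Segre class pushforward, and the standard behaviour of exceptional divisors. As recorded after Lemma~\ref{lemma:embedding_normal_bundles}, $B_1$ is the image of $\PP(\alpha_{2,3})$, so $B_1 = \PP(N_2)$ with $N_2 \coloneqq N_{\PP(W)}\PP(\Sym^2(W))$, sitting inside $E_1 = \PP(N_3)$, $N_3 \coloneqq N_{\PP(W)}\PP(\Sym^3(W)) = N_{B_0}V_0$, as the projectivization of the subbundle inclusion $\alpha_{2,3}\colon N_2\hookrightarrow N_3$. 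From the diagram of Euler sequences in the proof of Lemma~\ref{lemma:embedding_normal_bundles} (with $e=2$, $d=3$), the snake lemma applied to its first row against the second and third identifies $N_2\cong(\Sym^2(W)\otimes\OO(2))/(W\otimes\OO(1))$ and $N_3\cong(\Sym^3(W)\otimes\OO(3))/(W\otimes\OO(1))$, the maps $W\otimes\OO(1)\hookrightarrow\Sym^e(W)\otimes\OO(e)$ being the subbundle inclusions $\alpha_{1,e}$; in particular $c(N_2) = (1+2h)^{\binom{n+2}{2}}/(1+h)^{n+1}$, $r\coloneqq\rk N_2 = \binom{n+2}{2}-(n+1) = \binom{n+1}{2}$, and $\dim B_1 = n + r - 1 = \binom{n+2}{2}-2$.

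For part (1), let $\zeta\coloneqq c_1(\OO_{B_1}(1))$ be the relative hyperplane class of $\pi_1\colon B_1 = \PP(N_2)\to\PP^n$, so that $\OO_{B_1}(-1)\hookrightarrow\pi_1^\ast N_2$ is tautological. The projective bundle formula presents $\mathrm{CH}^*(B_1)$ as generated over $\mathrm{CH}^*(\PP^n) = \Z[h]/(h^{n+1})$ by $\zeta$, with relation $\sum_{i=0}^r\pi_1^\ast c_i(N_2)\,\zeta^{r-i} = 0$. To pass from $\zeta$ to $\epsilon$: the blow-up self-intersection formula gives $\OO_{E_1}(E_1) = N_{E_1}V_1 = \OO_{E_1}(-1)$, and since $B_1\subseteq E_1$ is the projectivization of a subbundle inclusion one has $\OO_{E_1}(1)|_{B_1} = \OO_{B_1}(1)$, hence $\epsilon = j_1^\ast[E_1] = c_1(\OO_{B_1}(-1)) = -\zeta$. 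Writing $\epsilon^{\dim B_1-s} = (-1)^{\dim B_1-s}\zeta^{\dim B_1-s}$ and pushing forward along $\pi_1$, the relation iterates to $\pi_{1\ast}(\zeta^{r-1+j}) = s_j(N_2)$, the degree-$j$ part of $s(N_2)\coloneqq c(N_2)^{-1} = (1+h)^{n+1}/(1+2h)^{\binom{n+2}{2}}$, which vanishes for $j<0$. Since $\dim B_1 - s = (r-1)+(n-s)$, this yields $0$ for $s\in[n+1,\dim B_1]$ (equivalently because $h^s=0$ there) and $(-1)^{\dim B_1-s}a_{n-s}$ for $s\in[0,n]$, where $a_i$ is the coefficient of $h^i$ in $s(N_2)$. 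Finally $a_0=1$, and comparing coefficients of $h^s$ in $(1+2h)^{\binom{n+2}{2}}\bigl(\sum_i a_i h^i\bigr) = (1+h)^{n+1}$, using $\binom{n+2}{2} = \dim B_1 + 2$, gives $a_s = \binom{n+1}{s} - \sum_{i=1}^s 2^i\binom{\dim B_1+2}{i}a_{s-i}$, which is exactly the stated recursion.

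For part (2), the exact sequence $0\to N_{B_1}E_1\to N_{B_1}V_1\to N_{E_1}V_1|_{B_1}\to 0$ together with $N_{E_1}V_1|_{B_1} = \OO_{B_1}(-1)$ (Chern class $1+\epsilon$) reduces the claim to computing $c(N_{B_1}E_1)$. By Lemma~\ref{lem:iden}, $N_{B_1}E_1\cong\mathcal{L}\otimes\pi_1^\ast(N_3/N_2)$ with $\mathcal{L}\coloneqq\OO_{B_1}(1)$. Tensoring the subbundle inclusions $\alpha_{2,3}$ and $\alpha_{1,e}$ by $\mathcal{L}$ preserves them, so $c(\mathcal{L}\otimes(N_3/N_2)) = c(\mathcal{L}\otimes N_3)/c(\mathcal{L}\otimes N_2)$; and since $\Sym^e(W)$ is a trivial bundle, $\mathcal{L}\otimes N_3 = (\mathcal{L}\otimes\OO(3))^{\oplus\binom{n+3}{3}}/(\mathcal{L}\otimes\OO(1))^{\oplus(n+1)}$ as a quotient of honest bundles, giving $c(\mathcal{L}\otimes N_3) = (1+\zeta+3h)^{\binom{n+3}{3}}/(1+\zeta+h)^{n+1}$ and similarly $c(\mathcal{L}\otimes N_2) = (1+\zeta+2h)^{\binom{n+2}{2}}/(1+\zeta+h)^{n+1}$. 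Dividing cancels the $(1+\zeta+h)^{n+1}$ factors, and substituting $\zeta=-\epsilon$ yields $c(N_{B_1}E_1) = (1+3h-\epsilon)^{\binom{n+3}{3}}/(1+2h-\epsilon)^{\binom{n+2}{2}}$; multiplying by $1+\epsilon$ gives the claimed formula.

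The one step that needs care is the convention bookkeeping: pinning down which line bundle is $\OO(1)$, deducing the sign $\epsilon=-\zeta$, using the self-intersection identity $\OO_{E_1}(E_1) = \OO_{E_1}(-1)$, and checking that every ``quotient of direct sums of line bundles'' manipulation is legitimate — which it is, because $\alpha_{1,e}$ and $\alpha_{2,3}$ are fibrewise injective, so all quotients occurring are honest vector bundles. The two binomial-coefficient identities at the end are routine.
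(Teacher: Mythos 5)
Your proof is correct and follows essentially the same route as the paper, which simply defers to Aluffi's argument while highlighting the two key points you also use: the identification $\epsilon = j_1^*[E_1] = c_1(\mathcal{O}_{E_1}(-1))|_{B_1}$ (hence $\epsilon=-\zeta$) and the extraction of the $a_s$ as the coefficients of the Segre class $s(N_{\nu_2(\PP^n)}\PP^{\binom{n+2}{2}-1}) = (1+h)^{n+1}/(1+2h)^{\binom{n+2}{2}}$. Your treatment of part (2) via the normal bundle sequence for $B_1\subseteq E_1\subseteq V_1$ and the twisted Euler-sequence presentations of $N_2$, $N_3$ is exactly the intended computation, just spelled out in more detail than the paper provides.
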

    \begin{proof} The proof is exactly as in \cite{Aluffi:1990}. Notice that $\mathcal{O}_{E_1}(-1)=\mathcal{O}_{V_1}(E_1)_{|_{E_1}}$, so that $c_1(\mathcal{O}_{E_1}(-1))=c_1(\mathcal{O}_{V_1}(E_1)_{|_{E_1}})=j_1^*([E_1])$.
    The coefficients in
    \[
    s(N_{v_2(\PP^n)}\PP^{\binom{n+2}{2}-1})=\frac{(1+h)^{n+1}}{(1+2h)^{\binom{n+2}{2}}} = a_0 + a_1 h + \cdots + a_n h^n
    \]
    are computed by equating the coefficients of the powers of $h$.
    From the above expression one gets the relation
    \[
    \binom{n+1}{s} = \sum_{i = 0}^s 2^{s-i}\binom{\binom{n+2}{2}}{s-i}a_i
    \]
    from which we attain the recursive formula for the $a_i$'s stated above.
    \end{proof}

    \begin{rmk}[$n=3$]
    In this case we have:
    \begin{equation*}
        s(N_{v_2(\PP^3)}\PP^9) = 1 - 16 h + 146 h^2 - 996 h^3.
    \end{equation*}
    The center $B_1$ has dimension $8$ and
    $\int_{B_1}h^j \epsilon^{8-j}  = 0 \text{ when } j \geq 4$, while the other intersection numbers are summarized in Table \ref{table:intsurfaces-1},
    \begin{table}[ht]
    \begin{center}
    \begin{tabular}{|c|c|c|c|} 
    \hline
    1 & $h$ & $h^2$ & $h^3$  \\
    \hline
    $-996$ & $-146$ & $-16$ & $-1$\\
    \hline
    \end{tabular} 
    \vspace{0.2cm}
     \caption{The intersection number of $\int_{B_1}h^j \epsilon^{8-j}$ is given in column $l^j$.}
    \label{table:intsurfaces-1}
    \end{center}
    \end{table}
    \end{rmk}
    
    \begin{lemma} We have $\pi_1^*(P) = P_1$ and $\pi_1^*(L) = L_1 + 2 E_1$.
    The full intersection classes of point and line-conditions with respect to $B_1$ are: 
    \begin{align*}
        B_1 \circ P_1 & = 3h, & B_1 \circ L_1 & = 1 + 12 h - 2\epsilon.\\
    \end{align*}
    \end{lemma}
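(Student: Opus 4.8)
The plan is to reduce both identities to the multiplicity formula of Remark~\ref{rmk:intersClasses}, $B_1\circ X = e_{B_1}(X)\,[B_1] + j_1^*[X]$, together with two pullback computations; the only genuinely nontrivial inputs are the multiplicities of the line condition along $B_0$ and of its proper transform along $B_1$, and these are already available from Lemma~\ref{lem:lemma0.1} and Lemma~\ref{lem:tangency}.

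First I would establish the pullback formulas $\pi_1^*(P) = P_1$ and $\pi_1^*(L) = L_1 + 2E_1$. The point condition $P = P^p$ is a hyperplane of $V_0$, hence smooth, and for a general point $p$ the locus $B_0 = \nu_3(\PP^n)$ of triple hyperplanes is not contained in $P$: a triple hyperplane $[\lambda^3]$ lies in $P^p$ exactly when $\lambda(p)=0$, which cuts out a proper (hyperplane) subvariety of $B_0 \cong \PP^n$. Thus $P$ has multiplicity $0$ along $B_0$, so proper transform equals total transform. For the line condition, Remark~\ref{rmk:PGL} gives $\deg L = 4$, and Lemma~\ref{lem:lemma0.1}(ii) says $L$ has multiplicity exactly $2$ along $B_0$, whence $\pi_1^*(L) = L_1 + 2E_1$ as divisor classes on $V_1$.

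Next I would compute the restrictions to $B_1$. Two facts drive this: the composite $\pi_1\circ j_1\colon B_1 \to V_0$ factors through $B_0 = \nu_3(\PP^n)$ via the projective bundle projection $B_1 \to B_0$, so the hyperplane class $H$ of $V_0$ pulls back along it to $3h$ (restrict $H$ to $B_0$, getting $\OO_{\PP^n}(3)$, then pull back to the generator $h$ of Lemma~\ref{lem:onechow}); and $j_1^*[E_1] = \epsilon$ by the definition of $\epsilon$. For the point condition, a general cubic divisible by $\lambda$ avoids the general point $p$, so $B_1 \not\subseteq P_1$ and $e_{B_1}(P_1) = 0$; since $[P_1] = \pi_1^*(H)$ we obtain $B_1\circ P_1 = 3h$. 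For the line condition, Lemma~\ref{lem:lemma0.1}(iii) says the projectivized tangent cone of $L^\ell$ at $[\lambda^3]$ is supported on the cubics through the point $\ell\cap\mathcal V(\lambda)$, which contains every cubic divisible by $\lambda$, i.e.\ the whole fiber of $B_1$ over $[\lambda^3]$; hence $B_1 \subseteq L_1$, and Lemma~\ref{lem:tangency} gives that $L_1$ is generically smooth along $B_1$, so $e_{B_1}(L_1) = 1$. Combining with $[L_1] = \pi_1^*(4H) - 2[E_1]$ gives $B_1\circ L_1 = [B_1] + 12h - 2\epsilon = 1 + 12h - 2\epsilon$.

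I expect the equality $e_{B_1}(L_1) = 1$ to be the point to pin down most carefully. Beyond the global argument via Lemma~\ref{lem:tangency}, it can be verified directly: in the affine chart $D(a_{(0,0,0)})\cap D(b_{(0,1,1)})$ of $V_1$ the proper transform $L_1^\ell$ has local equation $-4a' - (3b_{(1,1,1)} - 2a_{(0,0,1)})^2 = 0$ (computed in the proof of Lemma~\ref{lem:tangency}); both summands vanish on $B_1$, but the linear term $-4a'$ forces the order of vanishing along $B_1$ to be exactly $1$. With the pullbacks and these multiplicities in place, the remaining steps are routine bookkeeping in $\mathrm{CH}(B_1)$.
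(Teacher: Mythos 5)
Your proof is correct and follows exactly the template the paper itself uses for the analogous lemma at the level of $B_2$ (multiplicity $2$ of $L$ along $B_0$ from Lemma~\ref{lem:lemma0.1}(ii), generic smoothness of $L_1$ along $B_1$ from Lemma~\ref{lem:tangency}, and the formula of Remark~\ref{rmk:intersClasses} combined with $j_1^*\pi_1^*H = 3h$); the paper states this particular lemma without proof, and your argument, including the direct check of $e_{B_1}(L_1)=1$ from the local equation $-4a'-(3b_{(1,1,1)}-2a_{(0,0,1)})^2$, supplies precisely the intended justification.
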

    
    \subsection{Chow ring of $\boldsymbol{B_2}$} 
    The third center of blow-up is $B_2$, which was described in the subsection \ref{subsec:twoblow}.
    \begin{lemma}
    $B_2$ is a $\PP^{\binom{n+2}{3}-1}$-bundle over $B_1$ and it has dimension $\binom{n+3}{3}-3$.
    \begin{enumerate}
        \item Consider $\pi_{2_{|_{B_2}}}: B_2 \to B_1$ and the inclusion $j_2: B_2 \hookrightarrow V_2$.
        Then the intersection ring of $B_2$ is generated by the pullback of the classes $h$ and $\epsilon$ along the projection $\pi_2 $ and by $\phi$ the pullback of $[E_2]$ along the inclusion $j_2$. Let $\{c_{j,k}\}_{j\in[n],k\in [\dim B_1]}$ be the sequence obtained recursively for $j + k \leq \dim B_1$, following the lexicographic order:
    \[\begin{cases}
    c_{0,0}=1 \\
              c_{j,k}=(-1)^k 2^j \binom{\dim B_1+2}{j,k}-\displaystyle\sum_{\substack{a\leq j, b\leq k \\ (a,b) \neq (j,k)}} c_{a,b}\binom{\dim B_2+3}{j-a,k-b}3^{j-a}(-1)^{k-b},
    \end{cases}\]where $\binom{a}{b,c}=\frac{a!}{(a-b-c)! b!c!}$. For $j\in[n], k\in [\dim B_1]$, we get
    \[\int_{B_2} h^{j}\epsilon^k\phi^{\dim B_2-j-k}=0 \qquad\qquad \]
   for $j+k\in [\dim B_1+1,\dim B_2]$ and
    \[\int_{B_2} h^{j}\epsilon^{k}\phi^{\dim B_2-j-k}=(-1)^{\dim B_2-j-k}\sum_{\substack{a+b=\dim B_1-j-k \\ a\in[n-j],b\in[\dim B_1]}}c_{a,b}(-1)^{\dim{B_1}-a-j}a_{n-a-j}\]
    for $j+k\in[0,\dim B_1]$, where the $a_{i}$ in the sum are the numbers obtained by recursion in Lemma \ref{lem:onechow}.
        \item Moreover $c (N_{B_2}V_2) = (1 + \phi)(1 + \epsilon - \phi)$
    \end{enumerate}
    \end{lemma}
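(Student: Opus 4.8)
The plan is to use that $B_2 = \PP(N_{B_1}E_1)$ is a projective bundle over $B_1$ (Lemma~\ref{lem:iden}) and to combine the projective bundle formula with the Chern class $c(N_{B_1}V_1)$ already computed in Lemma~\ref{lem:onechow}. Since $T\PP(W)$ has rank $n$, the bundle $N_{B_1}E_1 \cong p_1^\ast(\Sym^3(T\PP(W))) \otimes \mathcal{O}_{B_1}(1)$ has rank $\binom{n+2}{3}$, so $\pi_2|_{B_2}\colon B_2 \to B_1$ is a $\PP^{\binom{n+2}{3}-1}$-bundle and $\dim B_2 = \dim B_1 + \binom{n+2}{3} - 1 = \binom{n+2}{2} - 2 + \binom{n+2}{3} - 1 = \binom{n+3}{3} - 3$ by Pascal's identity. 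The projective bundle theorem presents $\mathrm{CH}^\ast(B_2) = \mathrm{CH}^\ast(B_1)[\xi]/(\sum_i c_i(N_{B_1}E_1)\,\xi^{\binom{n+2}{3}-i})$ with $\xi = c_1(\mathcal{O}_{B_2}(1))$; as $\mathrm{CH}^\ast(B_1)$ is generated by $h,\epsilon$ (Lemma~\ref{lem:onechow}), the generation claim reduces to identifying $\xi$ with $\phi = j_2^\ast[E_2]$. Using that $B_2 = \PP(N_{B_1}E_1) \hookrightarrow \PP(N_{B_1}V_1) = E_2$ is the sub-projective-bundle attached to $N_{B_1}E_1 \subseteq N_{B_1}V_1$, the tautological line bundles restrict compatibly, so $\phi = c_1(\mathcal{O}_{V_2}(E_2)|_{B_2}) = c_1(\mathcal{O}_{E_2}(-1)|_{B_2}) = c_1(\mathcal{O}_{B_2}(-1)) = -\xi$, and substituting $\xi=-\phi$ in the defining relation rewrites it in $h,\epsilon,\phi$.

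Next I would compute $c(N_{B_1}E_1)$ and its total Segre class. The filtration $0 \to N_{B_1}E_1 \to N_{B_1}V_1 \to N_{E_1}V_1|_{B_1} \to 0$, combined with $N_{E_1}V_1|_{B_1} = \mathcal{O}_{V_1}(E_1)|_{B_1}$ (first Chern class $\epsilon$), gives $c(N_{B_1}E_1) = c(N_{B_1}V_1)/(1+\epsilon) = (1+3h-\epsilon)^{\binom{n+3}{3}}/(1+2h-\epsilon)^{\binom{n+2}{2}}$ by Lemma~\ref{lem:onechow}(ii), hence $s(N_{B_1}E_1) = c(N_{B_1}E_1)^{-1} = (1+2h-\epsilon)^{\binom{n+2}{2}}/(1+3h-\epsilon)^{\binom{n+3}{3}}$. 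Writing $s(N_{B_1}E_1) = \sum_{a,b} c_{a,b} h^a \epsilon^b$, extracting the coefficient of $h^j\epsilon^k$ from $s(N_{B_1}E_1)\cdot(1+3h-\epsilon)^{\binom{n+3}{3}} = (1+2h-\epsilon)^{\binom{n+2}{2}}$, and isolating the $(a,b)=(j,k)$ term reproduces the stated recursion for $c_{j,k}$, using $\binom{n+3}{3}=\dim B_2 + 3$, $\binom{n+2}{2}=\dim B_1 + 2$, and $[h^p\epsilon^q](1+\alpha h-\epsilon)^m = \binom{m}{p,q}\alpha^p(-1)^q$.

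For the intersection numbers, I would use $\phi=-\xi$, the Grothendieck pushforward $\pi_{2\ast}(\xi^{\binom{n+2}{3}-1+i}) = s_i(N_{B_1}E_1)$ (Segre class normalized by $s\cdot c=1$), and $\dim B_2 - j - k = (\binom{n+2}{3}-1) + (\dim B_1 - j - k)$. The projection formula then gives $\int_{B_2} h^j\epsilon^k\phi^{\dim B_2-j-k} = (-1)^{\dim B_2 - j - k}\int_{B_1} h^j\epsilon^k\, s_{\dim B_1 - j - k}(N_{B_1}E_1)$. This vanishes when $\dim B_1 - j - k < 0$, i.e. for $j+k\in[\dim B_1+1,\dim B_2]$, which is the first assertion; for $j+k\le\dim B_1$ one expands $s_{\dim B_1-j-k}(N_{B_1}E_1) = \sum_{a+b=\dim B_1-j-k} c_{a,b} h^a\epsilon^b$ and inserts the top intersection numbers $\int_{B_1} h^{j+a}\epsilon^{k+b}$ of Lemma~\ref{lem:onechow}, which vanish unless $j+a\le n$ and otherwise equal $(-1)^{\dim B_1-j-a}a_{n-j-a}$, giving exactly the claimed closed form.

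Part (ii) is shorter: the explicit equations in Remark~\ref{rmk:coord_V2} exhibit $B_2 = \tilde{E}_1 \cap E_2$ as the transversal intersection of the smooth divisors $\{c_a = 0\}$ and $\{b' = 0\}$, so $N_{B_2}V_2$ has total Chern class $c(\mathcal{O}_{V_2}(\tilde{E}_1)|_{B_2})\cdot c(\mathcal{O}_{V_2}(E_2)|_{B_2})$; the second factor is $1+\phi$, and from $\pi_2^\ast E_1 = \tilde{E}_1 + E_2$ (the multiplicity of $E_1$ along $B_1\subseteq E_1$ being $1$) together with $c_1(\mathcal{O}_{V_1}(E_1)|_{B_1}) = \epsilon$ we get $c_1(\mathcal{O}_{V_2}(\tilde{E}_1)|_{B_2}) = \epsilon - \phi$, whence $c(N_{B_2}V_2) = (1+\phi)(1+\epsilon-\phi)$. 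The whole argument is routine projective-bundle bookkeeping in the spirit of \cite{Aluffi:1990}; the only steps requiring genuine care are the sign normalizations — specifically the identification $\phi = -\xi$ (which relies on the sub-projective-bundle structure $B_2\subseteq E_2$ and the restriction property of $\mathcal{O}(1)$) and the precise shape of $c(N_{B_1}E_1)$, where the quotient factor is $1+\epsilon$ rather than $1+2h-\epsilon$, since it is exactly these normalizations that make the recursion come out as stated.
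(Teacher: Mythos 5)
Your proposal is correct and follows essentially the same route as the paper: realize $B_2=\PP(N_{B_1}E_1)$, compute $c(N_{B_1}E_1)=c(N_{B_1}V_1)/(1+\epsilon)$ and its Segre class to get the recursion and the pushforward of powers of $\phi=-c_1(\mathcal{O}_{B_2}(1))$, and obtain part (ii) from the transversal intersection $B_2=\tilde{E}_1\cap E_2$ with $c_1(\mathcal{O}_{V_2}(E_2)|_{B_2})=\phi$ and $c_1(\mathcal{O}_{V_2}(\tilde{E}_1)|_{B_2})=\epsilon-\phi$. You simply make explicit several steps the paper delegates to \cite[Example 8.3.4]{fulton:intersection}, and you correctly use $N_{B_1}E_1$ where the paper's proof has the typo $N_{B_1}V_1$.
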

    \begin{proof}
    Since $B_2=\PP(N_{B_1}V_1)$, the first point follows from \cite[Example 8.3.4]{fulton:intersection}.
    Point (ii) in Lemma \ref{lem:onechow} implies
    \[
    c(N_{B_1}E_1) = \frac{(1 + 3h - \epsilon)^{\binom{n+3}{3}}}{(1 + 2h - \epsilon)^{\binom{n+2}{2}}}.
    \]
    Hence recalling that $h^{n+1} = 0$ and $\epsilon^{\binom{n+2}{2}} = 0$, we have
    \[
    s(N_{B_1}E_1) = \frac{(1 + 2h - \epsilon)^{\binom{n+2}{2}}}{(1 + 3h - \epsilon)^{\binom{n+3}{3}}} = \sum_{\substack{j \in [n],k \in [\binom{n+2}{2}]}} c_{j,k} h^j \epsilon^k.
    \]
    Note that this relation allows to give a recursive formula for the coefficients $c_{j,k}$ (following the lexicographic order on $(j,k)$).
    For the second point we have that $B_2 = \tilde{E_1} \cap E_2$ hence $c(N_{B_2}V_2) = c(N_{E_2}V_2)c(N_{\tilde{E_1}}V_2)$.
    Note that $N_{E_2}V_2$ is a line bundle on $E_2$ and $N_{\tilde{E_1}}V_2$ is a line bundle on $E_1$.
    Since $N_{E_2}V_2=\mathcal{O}_{E_2}(-1)$, we get $c(N_{E_2}V_2)=1+\phi$. The isomorphism  $N_{\tilde{E_1}}V_2\simeq \pi_1^*(N_{E_1}V_1)\otimes \mathcal{O}_{E_2}(-E_2)$ implies $c(N_{\tilde{E_1}}V_2)=1+\epsilon-\phi$.
    Hence, one concludes that
    \[c(N_{B_2}V_2) = c(N_{E_2}V_2)c(N_{\tilde{E_1}}V_2) = (1 + \phi)(1 + \epsilon - \phi).\]
    \end{proof}
    
    \begin{rmk}[$n=3$]
    In the case of cubic surfaces, $B_2$ has dimension $17$ and
    $\int_{B_2}h^j\epsilon^k \phi^{17-j-k}  = 0 \text{ with } j+k > 8$ or $j > 3$, while the remaining integrals are summarized in Table \ref{table:intsurfaces-2}.
    \begin{table}[ht]
    \begin{center}
    \begin{tabular}{|c|c|c|c|c|c|c|c|c|c|} 
    \hline
    & 1 & $\epsilon$ & $\epsilon^2$ & $\epsilon^3$ & $\epsilon^4$ & $\epsilon^5$ & $\epsilon^6$ & $\epsilon^7$ & $\epsilon^8$  \\
    \hline
    1 & -1370200 & -641680 & 251160 & 24388 & -49400 & 12900 & 4460 & -4120 & 996\\
    \hline
    $h$  & -345280 & -3640 & 31668 & -10790 & -320 & 1860 & -820 & 146 & 0 \\
    \hline
    $h^2$ & -40040 & 8008 & 0 & -880 & 440 & -120 & 16 & 0 & 0 \\
    \hline
    $h^3$ & -2002 & 715 & -220 & 55 & -10 & 1 & 0 & 0 & 0 \\
    \hline
    \end{tabular}
    \end{center}
    \caption{The intersection number of $\int_{B_2}h^j\epsilon^k \phi^{17-j-k}$ is given in row $h^j$ and column $\epsilon^k$.}
    \label{table:intsurfaces-2}
    \end{table}
    \end{rmk}
    
    \begin{lemma}We have $\pi_2^*(P_1) = P_2$ and $\pi_2^*(L_1) = L_2 +  E_2$.
    The full intersection classes of point and line conditions with respect to $B_2$ are: 
    \begin{align*}
        B_2 \circ P_2 & = 3h, & B_2 \circ L_2 & = 1 + 12 h - 2\epsilon-\phi.\\
    \end{align*}
    \end{lemma}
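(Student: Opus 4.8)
The plan is to follow exactly the pattern used above for $B_1\circ P_1$ and $B_1\circ L_1$; the only new inputs are the tangency and smoothness statements of Lemmas~\ref{lem:tangency} and~\ref{lemma:S2intB2}, plus the formula of Remark~\ref{rmk:intersClasses}.

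\textbf{Step 1: the proper transforms.} First I would pin down $\pi_2^\ast(P_1)=P_2$ and $\pi_2^\ast(L_1)=L_2+E_2$, i.e. compute the multiplicities $e_{B_1}(P_1)$ and $e_{B_1}(L_1)$ of $P_1,L_1$ along the smooth center $B_1$. Since $\pi_1^\ast(P)=P_1$, the divisor $P_1$ meets $E_1$ only over the hyperplane $\{[\lambda]:p\in\lambda\}\subsetneq B_0$, while $B_1\to B_0$ is surjective; hence $B_1\not\subseteq P_1$ and $e_{B_1}(P_1)=0$. For $L_1$, Lemma~\ref{lem:tangency} already records that $B_1\subseteq L_1$ and that $L_1$ is smooth at the generic point of $B_1$, so $e_{B_1}(L_1)=1$. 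This gives the two displayed identities, and together with $[L]=4H$ from Remark~\ref{rmk:PGL} and $\pi_2^\ast[E_1]=[\tilde E_1]+[E_2]$ it yields $[P_2]=\pi_2^\ast\pi_1^\ast H$ and $[L_2]=\pi_2^\ast(4\pi_1^\ast H-2E_1)-E_2=4\pi_2^\ast\pi_1^\ast H-2[\tilde E_1]-3[E_2]$ in $\mathrm{CH}^1(V_2)$.

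\textbf{Step 2: the full intersection classes.} Then I would plug these into $B_2\circ X=e_{B_2}(X)[B_2]+j_2^\ast[X]$ from Remark~\ref{rmk:intersClasses}. For $X=P_2$: arguing as in Step~1 (now $B_2\to B_1$ is surjective and $P_2\cap E_2$ lies over $P_1\cap B_1\subsetneq B_1$) we get $B_2\not\subseteq P_2$, so $e_{B_2}(P_2)=0$; and since $B_2\hookrightarrow V_2\to V_1\to V_0$ factors through the cubic Veronese embedding $B_0=\PP^n\hookrightarrow V_0$, we have $j_2^\ast[P_2]=(\pi_2|_{B_2})^\ast(3h)=3h$, hence $B_2\circ P_2=3h$. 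For $X=L_2$: Lemma~\ref{lemma:S2intB2}(ii) gives $B_2\subseteq L_2$ with $L_2$ smooth at the generic point of $B_2$, so $e_{B_2}(L_2)=1$; restricting $[L_2]=4\pi_2^\ast\pi_1^\ast H-2[\tilde E_1]-3[E_2]$ to $B_2$, the first term contributes $12h$ as above, the relation $\pi_2^\ast[E_1]=[\tilde E_1]+[E_2]$ pulled back along $j_2$ gives $j_2^\ast[\tilde E_1]=\epsilon-\phi$ (with $\epsilon=j_2^\ast\pi_2^\ast[E_1]$ and $\phi=j_2^\ast[E_2]$ the generators of the previous subsection), and $j_2^\ast[E_2]=\phi$; altogether $j_2^\ast[L_2]=12h-2(\epsilon-\phi)-3\phi=12h-2\epsilon-\phi$, so $B_2\circ L_2=1+12h-2\epsilon-\phi$.

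\textbf{Where the difficulty is.} There is no genuine geometric obstacle left: everything substantive — that each line condition passes through $B_1$ but is only tangent to (not transverse with) $E_1$ along it, so that it survives the second blow-up with multiplicity one and becomes generically smooth along $B_2$ — is already packaged into Lemmas~\ref{lem:tangency} and~\ref{lemma:S2intB2}. The only points demanding care are keeping the two competing multiplicities apart, $e_{B_1}(L_1)=1$ controlling $\pi_2^\ast L_1$ versus $e_{B_2}(L_2)=1$ entering $B_2\circ L_2$, and tracking how $[E_1]$ splits as $[\tilde E_1]+[E_2]$ under $\pi_2^\ast$; this last splitting is exactly what produces the extra $-\phi$ compared with $B_1\circ L_1=1+12h-2\epsilon$.
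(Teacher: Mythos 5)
Your proposal is correct and follows essentially the same route as the paper's (much terser) proof: determine the multiplicities $e_{B_1}(L_1)=1$, $e_{B_2}(L_2)=1$ and $e_{B_i}(P_i)=0$ from Lemmas~\ref{lem:tangency} and~\ref{lemma:S2intB2}, pull back the divisor classes using $\pi_2^\ast[E_1]=[\tilde E_1]+[E_2]$, and apply Remark~\ref{rmk:intersClasses}. The only difference is that you spell out the intermediate bookkeeping (e.g.\ $j_2^\ast[\tilde E_1]=\epsilon-\phi$ and the resulting $-2(\epsilon-\phi)-3\phi=-2\epsilon-\phi$) that the paper leaves implicit.
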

    
    \begin{proof}
    As divisor classes we have $\pi_2^*(P_1) = P_2$ and $\pi_2^*(L_1) = L_2 +  E_2$, because $L_1$ is generically smooth along $B_1$.
    In the intersection ring of $B_2$ we get $j_2^*(P_2) = 3h$ and $j_2^*(L_2) = 12h - 2\epsilon-\phi$.
    The claim follows by Remark~\ref{rmk:intersClasses} observing that in our case the divisor $P_2$ does not contain $B_2$ and $L_2$ is smooth along $B_2$.
    \end{proof}
    
    \subsection{Chow ring of $\boldsymbol{B_3}$} The fourth center we blow-up is $B_3$, which was described in subsection \ref{subsec:thirdblow}. Recall that it is defined to be the proper transform in $V_3$ of $S_0$ and lemma \ref{lemma:phi3} we described an isomorphism $\phi_3$ of $S_3$ with $\Bl_{\Delta}\PP^n\times\PP^n$.
    
    \begin{thm}We identify $B_3$ with $\Bl_{\Delta}\PP^n\times\PP^n$.
    \begin{enumerate} 
        \item We have two natural projections of $\Bl_{\Delta}\PP^n\times\PP^n$ onto $\PP^n$.  Let $l,m$ be the pullbacks of the hyperplane classes in $\PP^n$ through these projections, and let $e$ denote the exceptional divisor. Consider the sequence $\{d_s\}$ obtained recursively:     \[\begin{cases} d_0=1\\d_s=-\sum_{i=0}^{s-1}\binom{n+1}{s-i}d_{i}.\end{cases}\]The intersection ring of $B_3$ is generated by $l,m,e$ subject to $em^s=el^s$ for every $s$, $l^{n+1}=m^{n+1}=0$ and  
              \[\int_{B_3} m^{n}l^{n}=1, \qquad \qquad \int_{B_3} l^{s}e^{2n-s}=(-1)^{2n-s-1}d_{n-s} \]with $s\in [n]$. All the remaining intersection numbers vanish.
        \item ($n=3$) In the case of cubic surfaces, we have 
        {\small
        \setlength{\jot}{0.9pt}
            \begin{align*}
                c(N_{B_3}V_3)=&1672560l^3m^3 - 66343820m^3e^3 + 36537350m^2e^4 - 10851224m e^5 + 1356403e^6 +\\ &209440 l^3 m^2 + 474320 l^2 m^3 + 8045100 m^3 e^2 - 5907690 m^2 e^3 + 2193180 m e^4 - \\ &328977 e^5 + 15960 l^3 m + 53560 l^2 m^2 + 81680 l m^3 - 582940 m^3 e + 642110 m^2 e^2 -\\ &317840 m e^3 + 59595 e^4 + 560 l^3 + 3720 l^2 m + 8400 l m^2 + 6460m^3 - 42166 m^2 e + \\& 31308 m e^2 - 7827 e^3 + 120 l^2 + 536 l m + 610 m^2 - 1880 m e + 705 e^2 + 16 l + 36 m -\\& 39 e + 1.
            \end{align*}}
    \end{enumerate}
    \end{thm}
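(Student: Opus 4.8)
The plan is as follows. For part~(i), I would begin by using the isomorphism $\phi_3$ to identify $B_3$ with $\Bl_\Delta(\PP^n\times\PP^n)$, and recall that $N_\Delta(\PP^n\times\PP^n)\cong T\PP^n$, so the exceptional divisor is $e=\PP(T\PP^n)$ fibered over $\Delta=\PP^n$. Since the Chow rings of $\PP^n\times\PP^n$ and of the center $\Delta$ are generated by hyperplane classes, the standard presentation of the Chow ring of a blow-up along a smooth center (Fulton's blow-up formula together with the projective bundle relation on the exceptional divisor, i.e.\ ``Keel's lemma'') shows that $A^{\ast}(B_3)$ is generated by the pullbacks $l,m$ of the two hyperplane classes and by the exceptional class $e$, subject to: $l^{n+1}=m^{n+1}=0$ inherited from $\PP^n\times\PP^n$; $e\cdot(l-m)=0$, equivalently $el^{s}=em^{s}$, because $e$ annihilates the kernel of $A^{\ast}(\PP^n\times\PP^n)\to A^{\ast}(\Delta)$, which is generated by $h_1-h_2$; and the Grothendieck relation of $e=\PP(T\PP^n)$, which, using $c(T\PP^n)=(1+h)^{n+1}$, makes $e$ nilpotent and expresses $e^{n+1}$ and higher powers through lower ones. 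This is exactly the stated presentation (the Grothendieck relation being suppressed in the statement).

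For the intersection numbers I would push forward. Along the birational contraction $\pi\colon B_3\to\PP^n\times\PP^n$ one gets $\int_{B_3}l^{a}m^{b}=\int_{\PP^n\times\PP^n}h_1^{a}h_2^{b}$, which is $1$ exactly when $a=b=n$. For a monomial involving $e$, the projection formula gives $\int_{B_3}\alpha\cdot e=\int_{e}\jmath^{\ast}\alpha$ with $\jmath\colon e\hookrightarrow B_3$; here $\jmath^{\ast}l=\jmath^{\ast}m$ is the hyperplane class of $\Delta\cong\PP^n$ and $\jmath^{\ast}e=-\zeta$ with $\zeta$ the relative $\OO(1)$ of $\PP(T\PP^n)$. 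Pushing further down to $\PP^n$ and using $p_{\ast}(\zeta^{\,n-1+k})=s_k(T\PP^n)$ reduces everything to the Segre classes of $T\PP^n$, and $s(T\PP^n)=(1+h)^{-(n+1)}=\sum_{s}d_s h^{s}$ with the $d_s$ obeying exactly the recursion in the statement. This yields $\int_{B_3}l^{s}e^{2n-s}=(-1)^{2n-s-1}d_{n-s}$ for $0\le s\le n$, and it makes the remaining numbers vanish (those with $1\le\deg_e<n$ because $p_{\ast}\zeta^{j}=0$ for $j<n-1$, and those involving $e$ together with both $l$ and $m$ because they reduce, via $el^{s}=em^{s}$, to one of the listed monomials). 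I do not expect any obstacle here beyond keeping track of sign conventions in the blow-up presentation.

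For part~(ii) the substance is the computation of $c(N_{B_3}V_3)=c(N_{S_3}V_3)$, and the plan is to propagate normal-bundle data down the tower $V_0\leftarrow V_1\leftarrow V_2\leftarrow V_3$. For $i=1,2$ the map $S_{i+1}\to S_i$ is an isomorphism, since $S_i\cap B_i=e$ is a smooth Cartier divisor of the smooth $S_i$, so blowing up $B_i$ alters $S_i$ only along $e$. Concretely, from the blow-up sequence $0\to TV_{i+1}\to\pi_{i+1}^{\ast}TV_i\to (i_{E_{i+1}})_{\ast}Q_{i+1}\to 0$, with $Q_{i+1}$ a vector bundle on $E_{i+1}=\PP(N_{B_i}V_i)$ of rank $\codim(B_i,V_i)-1$, restricting to $S_{i+1}$ and dividing by the tangent bundle yields
\[
c(N_{S_{i+1}}V_{i+1})=\frac{\sigma^{\ast}c(N_{S_i}V_i)}{c\bigl((i_e)_{\ast}(Q_{i+1}|_{e})\bigr)},
\]
whose denominator is computed by Grothendieck--Riemann--Roch from the already known $c(N_{B_i}V_i)$ (which pins down $Q_{i+1}$), the bundle $N_{e/S_i}$, and the classes $l,m,e$ from part~(i). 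The base case $i=1$ needs a separate argument because $S_0$ is singular along $B_0$; here I would compute $N_{S_1}V_1$ from the explicit embedding $\phi_1$ of $\Bl_\Delta\PP^n\times\PP^n$ into $V_1$ together with $\PGL_n$-invariance, or, equivalently, realize $N_{S_1}V_1$ as the cokernel of the (now everywhere injective) bundle map $T\Bl_\Delta\PP^n\times\PP^n\to(\phi_0\circ\beta)^{\ast}TV_0$, where $\beta\colon\Bl_\Delta\PP^n\times\PP^n\to\PP^n\times\PP^n$; its Chern class follows from the pulled-back Euler sequence $0\to\OO\to\Sym^{3}(W)\otimes\OO(1,2)\to\phi_0^{\ast}TV_0\to 0$, the blow-up formula for $T\Bl_\Delta\PP^n\times\PP^n$, and the fact that the excess of $d\phi_0$ is resolved precisely along $e$. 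As a consistency check, the generic fiber $N_{S_0}V_0|_{[\lambda\mu^2]}=\langle\lambda\mu^2\rangle^{\vee}\otimes\Sym^3(W)/(\mu^2 W+\lambda\mu W)$ already accounts for the $16l+36m$ in $c_1=16l+36m-39e$ when $n=3$, the $-39e$ being the sum of the corrections along $e$. For $n=3$ one then substitutes the classes and intersection numbers of part~(i) and runs the long but mechanical polynomial arithmetic, as in the accompanying code.

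The hard part will be part~(ii): getting the base case $N_{S_1}V_1$ correct---this is where the singularity of $S_0$ and the rank drop of $d\phi_0$ interact---and then bookkeeping the three $e$-supported corrections without sign or twist errors. Everything downstream of that, including the $n=3$ evaluation, is routine.
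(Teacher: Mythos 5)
Part~(i) of your proposal is correct and follows essentially the paper's own route: identify $e=\PP(T\Delta)$, push monomials containing $e$ down to $\Delta$ via the projection formula, and reduce to the Segre class $s(N_{\Delta}(\PP^n\times\PP^n))=(1+h)^{-(n+1)}$, whose coefficients satisfy exactly the stated recursion for the $d_s$. Nothing to add there.

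For part~(ii) the paper proceeds differently and, I would argue, more safely: it computes $c(N_{B_3}V_3)=\phi_3^{\ast}c(TV_3)/c(TB_3)$, where $\phi_3^{\ast}c(TV_3)$ is obtained by iterating Fulton's blow-up formula for Chern classes of tangent bundles (\cite[Theorem 15.4]{fulton:intersection}) down the tower $V_0\leftarrow V_1\leftarrow V_2\leftarrow V_3$ (this also requires $c(TB_1)$ and $c(TB_2)$), and $c(TB_3)$ comes from the same formula applied to $\Bl_{\Delta}\PP^n\times\PP^n$. This never touches $N_{S_0}V_0$ or $N_{S_1}V_1$, so the singularity of $S_0$ along $B_0$ never enters. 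Your telescoping of normal bundles is in principle an equivalent reorganization, but it has two weak points. First, the identity $c(N_{S_{i+1}}V_{i+1})=\sigma^{\ast}c(N_{S_i}V_i)/c\bigl((i_e)_{\ast}(Q_{i+1}|_e)\bigr)$ needs the restriction of the torsion quotient $(i_{E_{i+1}})_{\ast}Q_{i+1}$ to $S_{i+1}$ to be $(i_e)_{\ast}(Q_{i+1}|_e)$ on the nose, i.e.\ it needs $S_{i+1}$ to meet $E_{i+1}$ transversally along $e$. This does hold at the relevant levels (from the explicit formulas for $\phi_2$ and $\phi_3$ the ideals of $E_2$ and $E_3$ pull back to the reduced ideal $(u_1)$ of $e$), but it is not free in this construction --- recall that $S_1$ is \emph{tangent} to $E_1$ along $e$ --- so it must be checked, and you do not. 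Second, and more seriously, your base case is wrong as stated: the bundle map $T\Bl_{\Delta}\PP^n\times\PP^n\to(\phi_0\circ\beta)^{\ast}TV_0$ is \emph{not} everywhere injective. Since $\phi_0\circ\beta=\pi_1\circ\phi_1$ and $d\pi_1$ annihilates the tangent directions to the fibers of $E_1\to B_0$, while $\phi_1$ maps the fibers of $e\to\Delta$ into those fibers, this differential has nontrivial kernel along $e$; its cokernel is therefore not $N_{S_1}V_1$, and a Whitney-sum computation from the Euler sequence on $V_0$ alone would give the wrong class. To repair it you must replace $TV_0$ by $\phi_1^{\ast}TV_1$ (where injectivity holds because $\phi_1$ is a closed immersion), but computing $c(TV_1)$ is precisely the first application of Fulton's blow-up formula --- at which point you have rejoined the paper's argument. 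The $n=3$ evaluation is then routine, as you say.
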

    
    \begin{proof}
    We follow the proof of \cite[Theorem III (4)]{Aluffi:1990}. Consider the diagram:
    \[ 
        \begin{tikzcd}
        {e} \arrow[r, "j"] \arrow[d,"g"]
            & {\Bl_{\Delta}\PP^n\times\PP^n} \arrow[d, "f"] \\
        {\Delta} \arrow[r, "i"']
            & {\PP^n\times\PP^n}.
        \end{tikzcd}
    \]
    By definition we have $s(\Delta,\PP^n\times\PP^n)=s(N_{\Delta}(\PP^n\times\PP^n))$ and 
    \[s_i(N_{\Delta}(\PP^n\times\PP^n))=g_{*}((-\zeta)^{n-1+i})\]
    where $\zeta=c_1(\mathcal{O}_{\PP(e)}(-1))=j^*(e)$.
    Hence we get 
    \[s(\Delta,\PP^n\times\PP^n)=g_{*}\biggl(\sum_{i=0}^{n}(-1)^{n-1+i}(j^*e)^{n-1+i}\biggr).\]
    However, we can also compute $s(\Delta,\PP^n\times\PP^n)$ as the inverse of the Chern class of $N_{\Delta}(\PP^n\times\PP^n)$. This gives us
    \[s(\Delta,\PP^n\times\PP^n)=\frac{1}{(1+k)^{n+1}}=\left(\sum_{i=0}^{n}(-1)^ik^i\right )^{n+1}.\]
    Applying the projection formula and the above observations we get
    \begin{align*}
    \int_{B_3}f^*(l)^ie^{2n-i}&=\int_{B_3}j_{*}(j^*(f^*l^i)j^*(e^{2n-i-1}))=\int_{\Delta}k^ig_{*}(j^*(e)^{2n-i-1})=\\
    &=\int_{\Delta}k^i(-1)^{2n-i-1}s(\Delta,\PP^n\times\PP^n)=(-1)^{2n-i-1}d_{n-i}.
    \end{align*}
    For proving point (ii), we can compute $\phi_3^*(c(TV_3))$ applying \cite[Theorem 15.4]{fulton:intersection} multiple times. Notice that to do this, computing $c(TB_1)$ and $c(TB_2)$ is also needed. Hence we observe that \[c(TB_1)=\frac{(i_1\circ j_1)^*c(TV_1)}{c(N_{B_1}V_1)}\] and then we can  again use \cite[Theorem 15.4]{fulton:intersection} to compute the numerator. 
    Finally, $c(TB_3)$ is computed applying \cite[Theorem 15.4]{fulton:intersection} considering $B_3=\Bl_{\Delta}\PP^n\times\PP^n$.
    \end{proof}
    
    \begin{rmk}
    Notice that in principle it is possible to compute $c(N_{B_3}V_3)$ for any $n$, using the strategy of the proof. However, this can be computationally difficult. For instance, here is the result for $n=4$:
    {\footnotesize
        \setlength{\jot}{0.9pt}
    \begin{align*}
    c(N_{B_3}V_3) = \ &8604607900l^4m^4 + 1511859296400m^4e^4 - 956335227000m^3e^5 + 379626653775m^2e^6- \\& 86448428700me^7 + 8644842870e^8 + 699244875l^4m^3 + 1520696100l^3m^4 - \\& 107772730500m^4e^3 + 85215404025m^3e^4 - 40592536260m^2e^5 + 10784338950me^6 - \\&1232495880e^7 + 40828725l^4m^2 + 117863200l^3m^3 + 192910550l^2m^4 + 5484228225m^4e^2 - \\& 5781808210m^3e^3 + 3442721815m^2e^4 - 1097565900m e^5 + 146342120e^6 +  1525545l^4 m + \\& 6578880 l^3 m^2 + 14291235 l^2 m^3 + 15643810 l m^4 - 177497950 m^4 e + 280693735 m^3 e^2 - \\& 222848500 m^2 e^3 + 88807250 m e^4 - 14209160 e^5 + 27405 l^4 + 235480 l^3 m + 764065 l^2 m^2 + \\& 1109920 l m^3 + 609280 m^4 - 8685470 m^3 e + 10343355 m^2 e^2 - 5495900 m e^3 + 1099180 e^4 + \\& 4060 l^3 + 26245 l^2 m + 56940 l m^2 + 41475 m^3 - 306580 m^2 e + 244350 m e^2 - 65160 e^3 +\\& 435 l^2 + 1880 l m + 2045 m^2 - 6950 m e + 2780 e^2 + 30 l + 65 m - 76 e + 1
    \end{align*}}
    \end{rmk}
    
    \begin{rmk}[$n=3$]
    In this case the dimension of $B_3$ is $6$ and
    $\int_{B_3}l^jm^k e^{6-j-k}  = 0 \text{ with } j > 3$  or $k>3$, $\int_{B_3}l^3 m^{3}  = 1$ while the other integrals are summarised in Table \ref{table:intsurfaces-3}, remembering that $\int_{B_3}l^j m^k e^{6-j-k}=\int_{B_3}l^{j+k} e^{6-j-k}$ for $j+k<6$.
    \end{rmk}
    
    \vspace{-35pt}
    \begin{table}[ht]
    \begin{tabular}{|c|c|c|c|}
    \hline
    1 & $l$ & $l^2$ & $l^3$  \\
    \hline
    $20$ & $10$ & $4$ & $1$\\
    \hline
    \end{tabular} 
    \caption{The intersection number of $\int_{B_3}l^j e^{6-j-k}$ is given in column $l^j$.}
    \label{table:intsurfaces-3}
    \end{table}
    \vspace{-35pt}
    
    \begin{lemma}
    We have $\pi_3^*(P_2) = P_3$ and $\pi_3^*(L_2) = L_3 +  E_3$.
    The full intersection classes of point and line-conditions with respect to $B_3$ are: 
    \begin{align*}
        B_3 \circ P_3 & = l+2m, & B_3 \circ L_3 & = 1+4l+8m-6e.
    \end{align*}
    \end{lemma}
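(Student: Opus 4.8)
The plan is to transport every computation to the bottom row of the construction diagram, where $B_3=S_3$ is identified with $\Bl_{\Delta}\PP^n\times\PP^n$ through the isomorphism $\phi_3$ of Lemma~\ref{lemma:phi3}. The key structural facts are the compatibilities $\pi_1\circ\phi_1=\phi_0\circ\rho$, $\pi_2\circ\phi_2=\phi_1$ and $\pi_3\circ\phi_3=\phi_2$, each $\phi_i$ being the lift of $\phi_{i-1}$ (here $\rho\colon\Bl_{\Delta}\PP^n\times\PP^n\to\PP^n\times\PP^n$ is the blow-down), together with the divisor relations $\pi_1^*L=L_1+2E_1$, $\pi_2^*L_1=L_2+E_2$ and $\pi_i^*P_{i-1}=P_i$ already established. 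These reduce the problem to restricting divisor classes from $V_0$ and to determining the orders of vanishing of $E_1,E_2,E_3$ along the strict transform of $S_0$.

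I would first dispatch the two divisor relations for the third blow-up. For $\pi_3^*P_2=P_3$ it suffices that $P_2$ has multiplicity zero along the center $B_2$, i.e.\ $B_2\not\subseteq P_2$, for the same reason as in the cases $\pi_1^*P=P_1$ and $\pi_2^*P_1=P_2$: a general point condition does not contain the center. For $\pi_3^*L_2=L_3+E_3$ I would use that a line condition in $V_2$ contains $B_2=\mathcal{V}(c_a,b')$ (its equation $4b_{(0,2,2)}^3c_a+c_{(2,2,2)}^2b'$ in the chart of Remark~\ref{rmk:coord_V2} vanishes there, cf.\ the proof of Proposition~\ref{prop:inters}) while being generically smooth along $B_2$ by Lemma~\ref{lemma:S2intB2}(ii), so its multiplicity along $B_2$ equals $1$.

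For $B_3\circ P_3$: a general non-reduced cubic avoids a general point, so $B_3\not\subseteq P_3$, hence $e_{B_3}(P_3)=0$ and $B_3\circ P_3=j_3^*[P_3]=j_3^*\pi_3^*\pi_2^*\pi_1^*[P]$. The composite $S_3\to V_0$ is $\phi_0\circ\rho$ under $\phi_3$, so this equals $\rho^*\phi_0^*H$ with $H$ the hyperplane class of $V_0=\PP(\Sym^3(W))$; since the coordinates of $[\lambda\mu^2]$ have bidegree $(1,2)$ in $([\lambda],[\mu])$, one gets $\phi_0^*H=l+2m$, whence $B_3\circ P_3=l+2m$. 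For $B_3\circ L_3$: substituting $\phi_3$ (Lemma~\ref{lemma:phi3}) into the line-condition equation $4b_{(0,2,2)}^3+c_{(2,2,2)}^2d_b$ of Remark~\ref{rmk:invlinecond} gives $4s_2^6-4s_2^6=0$, so $L_3\supseteq S_3=B_3$; the $b_{(0,2,2)}$-derivative $12s_2^4$ is nonzero at a general point of $S_3$, so $L_3$ is generically smooth along $B_3$ and $e_{B_3}(L_3)=1$, giving $B_3\circ L_3=1+j_3^*[L_3]$. The three divisor relations combine to $[L_3]=\pi_3^*\pi_2^*\pi_1^*[L]-2\pi_3^*\pi_2^*[E_1]-\pi_3^*[E_2]-[E_3]$; restricting along $j_3$, the first term becomes $\rho^*\phi_0^*[L]=4(l+2m)$ (as $\deg L=4$), while $j_3^*\pi_3^*\pi_2^*[E_1]=\phi_1^*[E_1]$, $j_3^*\pi_3^*[E_2]=\phi_2^*[E_2]$ and $j_3^*[E_3]=\phi_3^*[E_3]$ by the compatibilities.

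The crux, and the main obstacle, is to evaluate these last three pullbacks — equivalently, the orders of vanishing of $E_1,E_2,E_3$ along the strict transforms of $S_0$ — for which the set-theoretic descriptions in Lemmas~\ref{lem:tangency} and~\ref{lemma:S2intB2} do not suffice. I expect $\phi_1^*[E_1]=2e$, since $\phi_0^*\mathcal{I}(B_0)=\mathcal{I}(\Delta)^2$ (proven in Lemma~\ref{lemma:liftofphi0}) makes $\phi_1^*\mathcal{O}_{V_1}(-E_1)$ equal to $\mathcal{O}(-2e)$; and $\phi_2^*[E_2]=\phi_3^*[E_3]=e$, because in the charts of Lemmas~\ref{lemma:liftofphi0} and~\ref{lemma:phi2} the inverse-image ideals of $B_1$ and of $B_2$ pull back, generically along $e$, to the ideal $(u_1)$ of $e$ — concretely $\phi_1^*(f_{(i,i,i)}')=-2s_i^3u_1$ using Lemma~\ref{lemma:equations_B1}, and $\phi_2^*(b')=-2u_1$ with $B_2=\mathcal{V}(c_a,b')$ using Lemma~\ref{lemma:phi2}. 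Granting $\phi_1^*[E_1]=2e$ and $\phi_2^*[E_2]=\phi_3^*[E_3]=e$, we obtain $j_3^*[L_3]=4(l+2m)-2\cdot2e-e-e=4l+8m-6e$, and therefore $B_3\circ L_3=1+4l+8m-6e$. Upgrading the set-theoretic intersections to the precise multiplicities $2,1,1$ that produce the coefficient $-6$ is what forces one to work with the explicit parametrizations $\phi_1,\phi_2,\phi_3$ and the explicit ideal generators of $B_0,B_1,B_2$.
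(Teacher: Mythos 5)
Your proposal is correct and follows essentially the same route as the paper: the paper's (very terse) proof is precisely the decomposition $j_3^*(L_3) = 4(l+2m) - 2(2e) - e - e$ together with the observations that $P_3$ does not contain $B_3$ and that $L_3$ is generically smooth along $B_3$, which are exactly the steps you carry out. Your explicit verification of the multiplicities $\phi_1^*[E_1]=2e$, $\phi_2^*[E_2]=e$, $\phi_3^*[E_3]=e$ via the coordinates $a'=-u_1^2$, $b'=-2u_1$, $c_a=u_1/2$ supplies the detail the paper leaves implicit.
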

    
    \begin{proof}
    In the intersection ring of $B_3$ we get $j_3^*(P_3) =l+2m$ and $j_3^*(L_3) = 4(l+2m) - 2(2e)-e-e=4l+8m-6e$.
    The assertion is then proved by noticing that $P_3$ does not contain $B_3$ and $L_3$ is smooth along $B_3$.
    \end{proof}
    
    \subsection{Chow ring of $\boldsymbol{B_4}$} The fifth center we blow up is $B_4$, which was described in subsection \ref{subsec:fourblow}. In particular, recall that $B_4=\PP(\mathcal{E})$ and that we have an isomorphism of $\mathcal{E}|_e$ given in Proposition \ref{prop:identify_E}.
    
    \begin{thm}\label{thm:B4}
    We identify $B_4$ with $\PP(\mathcal{E})$.
    \begin{enumerate} 
        \item Let $l,m,e$ be the pullbacks of the generators of the Chow ring of $B_3$ through the projection $\pi_4|_{B_4}:B_4 \to B_3$. The Chow ring of $B_4$ is generated by $l,m,e$ and $z$ where $z$ is the first Chern class of $\mathcal{O}_{B_4}(-1)$. The intesection numbers in the case $n=3$ are collected in Table \ref{table:intsurfaces-4}.
            \begin{table}[ht]
                \begin{center}
                \begin{tabular}{|c|c|c|c|c|c|c|c|c|c|} 
                    \hline
                    & $e$ & $e^2$ & $e^3$ & $e^4$ & $e^5$ & $e^6$   \\
                    \hline
                    1 & -1820  &  -580 &   340 &   12 &  -60 &    20\\
                    \hline
                    $m$  & -890  & 190 &    54 &  -42 &   10 &     0 \\
                    \hline
                    $m^2$ & 0   & 68 &  -24 &    4  &   0  &   0  \\
                    \hline
                    $m^3$ &  51 &   -9  &   1  &   0    & 0 &    0 \\
                    \hline
                \end{tabular}
                \end{center}
                
                \vspace*{0.2cm}
                
                \begin{center}
                \begin{tabular}{|c|c|c|c|c|} 
                    \hline
                    & $1$ & $m$ & $m^2$ & $m^3$  \\
                    \hline
                    $1$ & 13720 & 1610 &  -600 &  -175\\
                    \hline
                    $l$  & 1610 & -230 & -35 & 21 \\
                    \hline
                    $l^2$ & -600 & -35 & 46 & 6 \\
                    \hline
                    $l^3$ & -175 & 21 & 6 & 1 \\
                    \hline
                \end{tabular}
                \vspace{0.2cm}
                \caption{The intersection number of $\int_{B_4}m^j e^{k} z^{8-j-k}$ is given in row $m^j$ and column $e^k$, while $\int_{B_4}m^j l^{k} z^{8-j-k}$ is given in row $l^k$ and column $m^j$}
                \label{table:intsurfaces-4}
                \end{center}
            \end{table}
        \item \label{item:Chern_classes_E}($n=3$) In the case of cubic surfaces, we have 
        {\footnotesize
        \setlength{\jot}{0.9pt}
            \begin{align*}
                c(N_{B_4}V_4)=&-8540 e^6z^2 - 45500l^2m^2z^4 - 109900lm^3z^4 + 280350m^2e^2z^4 - 325500me^3z^4 \\ & + 106575e^4z^4 + 13440l^2mz^5 + 44800lm^2  z^5 + 47320m^3z^5 - 235200m^2ez^5 + 174720me^2z^5 \\&- 43680e^3z^5 - 1260l^2z^6- 7560lmz^6 - 11620m^2z^6 + 30240mez^6 - 11340e^2z^6 + 480lz^7 \\&+ 1440mz^7 - 1440ez^7 - 75z^8 + 12810e^6z + 251300l^2m^3z^2 + 195650me^4z^2 - 108220e^5z^2 \\&- 45500l^2m^2z^3 - 109900lm^3z^3 + 280350m^2e^2z^3 - 325500me^3z^3 + 106575e^4z^3 + 630l^2z^5\\& + 3780lmz^5 + 5810m^2z^5 - 15120mez^5 + 5670e^2z^5 - 420lz^6 - 1260mz^6 + 1260ez^6 + 90z^7 \\&- 4270e^6 - 201040l^2m^3z - 156520me^4z + 86576e^5z + 81900l^2m^2z^2 + 197820lm^3z^2 \\&- 504630m^2e^2z^2 + 585900me^3z^2 - 191835e^4z^2 - 13440l^2mz^3 - 44800lm^2z^3 - 47320m^3z^3\\& + 235200m^2ez^3 - 174720me^2z^3 + 43680e^3z^3 + 630l^2z^4 + 3780lmz^4 + 5810m^2z^4\\& - 15120mez^4 + 5670e^2z^4 - 42z^6 + 50260l^2m^3 + 39130me^4 - 21644e^5 - 45500l^2m^2z \\&- 109900lm^3z + 280350m^2e^2z - 325500me^3z + 106575e^4z + 13440l^2mz^2 + 44800lm^2z^2 \\&+ 47320m^3z^2 - 235200m^2ez^2 + 174720me^2z^2 - 43680e^3z^2 - 1260l^2z^3 - 7560lmz^3 \\&- 11620m^2z^3 + 30240mez^3 - 11340e^2z^3 + 420lz^4 + 1260mz^4 - 1260ez^4 - 42z^5 + 9100l^2m^2 \\&+ 21980lm^3 - 56070m^2e^2 + 65100me^3 - 21315e^4 - 5760l^2mz - 19200lm^2z - 20280m^3z \\&+ 100800m^2ez - 74880me^2z + 18720e^3z + 900l^2z^2 + 5400lmz^2 + 8300m^2z^2 - 21600mez^2 \\&+ 8100e^2z^2 - 480lz^3 - 1440mz^3 + 1440ez^3 + 90z^4 + 960l^2m + 3200lm^2 + 3380m^3 - 16800m^2e\\& + 12480me^2 - 3120e^3 - 315l^2z - 1890lmz - 2905m^2z + 7560mez - 2835e^2z + 270lz^2 + 810mz^2\\& - 810ez^2 - 75z^3 + 45l^2 + 270lm + 415m^2 - 1080me + 405e^2 - 80lz - 240mz + 240ez + 35z^2\\& + 10l + 30m - 30e - 9z + 1
            \end{align*}}
    \end{enumerate}
    \end{thm}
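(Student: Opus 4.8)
The plan is to reduce both parts to a single computation — the total Chern class $c(\mathcal{E}) \in \mathrm{CH}^{\ast}(B_3)$ — and to carry that computation out, in the case $n = 3$, by combining what is known about $\mathcal{E}$ away from the exceptional locus $e$ with the identification of $\mathcal{E}|_e$ from Proposition~\ref{prop:identify_E}. The reductions are formal. For part (i): since $B_4 = \PP(\mathcal{E})$ with $\mathcal{E}$ of rank $r = \binom{n}{2}$, the projective bundle theorem presents $\mathrm{CH}^{\ast}(B_4)$ as $\mathrm{CH}^{\ast}(B_3)[z]/\big(\sum_{i=0}^{r}(-1)^i\pi^{\ast}c_i(\mathcal{E})\,z^{r-i}\big)$, where $z = c_1(\mathcal{O}_{B_4}(-1))$ and $l, m, e$ are pulled back along $\pi = \pi_4|_{B_4}\colon B_4 \to B_3$ from the generators of $\mathrm{CH}^{\ast}(B_3)=\mathrm{CH}^{\ast}(\Bl_{\Delta}\PP^n\times\PP^n)$ determined in the preceding theorem; the entries of Table~\ref{table:intsurfaces-4} then come out by pushing top-degree monomials forward along $\pi$, which writes each of them in terms of the (already computed) pairings on $B_3$ and the Segre classes of $\mathcal{E}$. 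For part (ii): the flag $B_4 \subseteq E_4 \subseteq V_4$ gives the exact sequence $0 \to N_{B_4}E_4 \to N_{B_4}V_4 \to N_{E_4}V_4|_{B_4} \to 0$, in which $N_{E_4}V_4|_{B_4} = \mathcal{O}_{E_4}(-1)|_{B_4} = \mathcal{O}_{B_4}(-1)$ contributes the factor $1+z$, while $N_{B_4}E_4 \cong \pi^{\ast}(N_{B_3}V_3/\mathcal{E})\otimes\mathcal{O}_{B_4}(1)$ by Lemma~\ref{lemma:NB4V4iso} with $c(N_{B_3}V_3/\mathcal{E}) = c(N_{B_3}V_3)/c(\mathcal{E})$, the numerator being the explicit $n=3$ class obtained above. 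Expanding the twist of the rank-$\binom{n+2}{3}$ bundle $N_{B_3}V_3/\mathcal{E}$ by $\mathcal{O}_{B_4}(1)$ via the splitting principle then yields $c(N_{B_4}V_4)$ as a polynomial in $z$ with coefficients built from $c(\mathcal{E})$ and $c(N_{B_3}V_3)$. So both parts are mechanical once $c(\mathcal{E})$ is known.

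To get $c(\mathcal{E})$ for $n = 3$, I would proceed in two steps. First, over $U = B_3 \setminus e \cong \PP^n\times\PP^n\setminus\Delta$ the proof of Proposition~\ref{prop:vector_bundle} and Lemma~\ref{lemma:NB4V4iso} identify $\mathcal{E}|_U \cong \Sym^2(\mathcal{Q})|_U \otimes \mathcal{O}(1,1)$, where $\mathcal{Q} = (W\otimes\mathcal{O}_{B_3})/\mathcal{U}$ and $\mathcal{U} \subseteq W\otimes\mathcal{O}_{B_3}$ is the rank-$2$ subbundle with fibre $\langle\lambda,\mu\rangle$ over $([\lambda],[\mu])$, i.e.\ the pullback of the tautological subbundle along the resolution $B_3 \to \Gr(2,W)$ of the span map. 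Since $\mathcal{U}|_U = \mathcal{O}(-1,0)\oplus\mathcal{O}(0,-1)$ we have $c(\mathcal{Q}|_U) = \big((1-l)(1-m)\big)^{-1}$, hence an explicit formula for $c(\mathcal{E}|_U)$ in $l$ and $m$; because $j_e^{\ast}$ is surjective, $\mathrm{CH}^{\ast}(B_3)/(e)\cong\mathrm{CH}^{\ast}(U)$, so this determines $c(\mathcal{E})$ modulo the ideal $(e)$. Second, I would compute $j_e^{\ast}c(\mathcal{E}) = c(\mathcal{E}|_e) = c(\Sym^2 T_{e/\Delta})$ from Proposition~\ref{prop:identify_E} and Remark~\ref{rem:chernEbundle}, in terms of $c(T\PP^n)$ and the relative hyperplane class $\zeta$ of $e = \PP(T\Delta)$. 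Writing $c_i(\mathcal{E}) = \tilde c_i + j_{e\ast}(\gamma_i)$ with $\tilde c_i$ an arbitrary lift of the class already pinned down modulo $(e)$ and $\gamma_i \in \mathrm{CH}^{i-1}(e)$, the self-intersection formula $j_e^{\ast}j_{e\ast}(\gamma_i) = \gamma_i\cdot([e]|_e) = -\zeta\,\gamma_i$ turns the restriction identity into $\zeta\,\gamma_i = j_e^{\ast}\tilde c_i - c_i(\Sym^2 T_{e/\Delta})$, whose right-hand side is already known. For $n = 3$ one has $r = \binom{n}{2} \le n$, so every $\gamma_i$ with $1 \le i \le r$ lies in codimension $i-1 \le n-1$, a range in which multiplication by $\zeta$ on $\mathrm{CH}^{\ast}(e)$ is injective; hence $\gamma_1, \gamma_2, \gamma_3$ — and with them $c(\mathcal{E})$ — are uniquely determined.

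The hard part is exactly this last step, and it is also where the method is tied to small dimension: as soon as $n \ge 4$ one has $\binom{n}{2} > n$, so the top Chern classes of $\mathcal{E}$ call for correction terms $\gamma_i$ in codimensions $\ge n$, which is precisely where $\zeta$ acquires a nonzero annihilator in $\mathrm{CH}^{\ast}(e)$; then the two available pieces of data — the restrictions of $c(\mathcal{E})$ to $U$ and to $e$ — no longer pin down $c(\mathcal{E})$, and this is the obstruction recorded in Remark~\ref{rmk:goeswrong}. Once $c(\mathcal{E})$ is in hand for $n = 3$, the remaining steps — assembling the pairings on $B_4 = \PP(\mathcal{E})$ for part (i), and the splitting-principle expansion of $c(N_{B_4}V_4)$ for part (ii) — are long but entirely routine and are what the accompanying code executes.
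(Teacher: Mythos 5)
Your proposal is correct and follows essentially the same route as the paper: both parts reduce to computing $c(\mathcal{E})$ for $n=3$, which is reconstructed from its two restrictions $c(\mathcal{E}|_{B_3\setminus e})$ (via excision) and $c(\mathcal{E}|_e)=c(\Sym^2 T_{e/\Delta})$ (via Proposition~\ref{prop:identify_E} and Remark~\ref{rem:chernEbundle}), with the reconstruction succeeding precisely because $\rk(\mathcal{E})=\binom{3}{2}=3\leq n$, after which part (i) is the pushforward of powers of $z$ (Segre classes of $\mathcal{E}$) and part (ii) is Lemma~\ref{lemma:NB4V4iso} plus the factor $1+z$ from $N_{E_4}V_4$. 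The only cosmetic difference is that you resolve the gluing ambiguity by injectivity of multiplication by $\zeta$ on $\mathrm{CH}^{i-1}(e)$ via the self-intersection formula, whereas the paper does the equivalent bookkeeping with an explicit integer $w$ in degree $n$; both correctly locate the same obstruction for $n\geq 4$ recorded in Remark~\ref{rmk:goeswrong}.
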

    
    \begin{proof}
    To compute the intersection numbers, we follow the usual strategy adopted in the previous proofs, using 
    \[s(\mathcal{E})=(\pi_4|_{B_4})_*\left(\sum_{i=0}^{\dim(B_4)}(-1)^{i}z^i\right).\]
    
    The Segre class $s(\mathcal{E})$ is computed explicitly in the case $n=3$ thanks to Remark \ref{rem:chernEbundle}.
    In fact, we have the Chern Classes $c(\mathcal{E}|_{B_3\setminus e})$ and $c(\mathcal{E}|_e)$. The first one is in the Chow ring of $B_3\setminus e$, which thanks to the excision theorem is described by \[\frac{\mathbb{Z}[l,m]}{(l^{n+1},m^{n+1},[\Delta])}\] where $[\Delta]=\sum_{i=0}^{n}l^{n-i} m^i$. Therefore $c(\mathcal{E}|_{B_3\setminus e})=\sum_{d=0}^{2n}\sum_{j=0}^{d}o_{j,d-j}m^jn^{d-j}$.
    
    The second one is an element of the Chow ring of $e$ described by \[\frac{\mathbb{Z}[k,\zeta]}{(k^{n+1},\zeta^{2n},(n+1)k^{n}+\sum_{i=1}^{n}\zeta^ik^{n-i}c_{n-i}(T\Delta))}\]
    where $\zeta$ is just the pull-back of the class $e$ through the inclusion of $e$ in $B_3$, and $c_i(T\Delta)$ is the $i$-th Chern class of the tangent space of the diagonal. Therefore $c(\mathcal{E}|_{B_3\setminus e})=\sum_{d=0}^{2n}\sum_{j=0}^{d}u_{j,d-j}k^j\zeta^{d-j}$.
    In general, knowing $c(\mathcal{E}|_{B_3\setminus e})$ and $c(\mathcal{E}|_e)$ allows us to reconstruct $c(\mathcal{E})$ up to degree $n$. In fact, for $d<n$ we have \[c_d(\mathcal{E})=\sum_{j=0}^{d}o_{j,d-j}m^jl^{d-j}+\sum_{j=0}^{d-1}u_{j,d-j}m^je^{d-j}.\]
    For $d=n$, call then $\sum_{j=0}^{n}o_{j,n-j}=u_{n,0}+(n+1){w}$ for some ${w}\in\mathbb{Z}$ and 
    \[c_n(\mathcal{E})=\sum_{j=0}^{n}o_{j,n-j}m^jn^{n-j}-w[\Delta]+\sum_{j=0}^{n-1}u_{j,n-j}k^j\zeta^{n-j}.\]
    When $n=3$, since rk$(\mathcal{E})=3$, we get the entire total Chern class.
    
    For proving \eqref{item:Chern_classes_E}, we can follow the strategy of \cite[Theorem III(ii)]{Aluffi:1990} and Lemma \ref{lemma:NB4V4iso}.
    \end{proof}
    
    \begin{rmk}\label{rmk:goeswrong}
    We are not able to recover the Chern class $c(\mathcal{E})$ in the case $n>3$. Indeed, knowing $c(\mathcal{E}|_e)$ and $c(\mathcal{E}|_{B_3\setminus e})$ we can recover the $c(\mathcal{E})$ only up to integer multiples of $m^k [\Delta] = m^{k}(\sum_{i=1}^{n}e^i m^{n-i} c_{n-i}(T\Delta)))$ for $k\geq 1$.
    \end{rmk}

    \begin{lemma}We have $\pi_4^*(P_3) = P_4$ and $\pi_4^*(L_3) = L_4 +  E_4$.
    The full intersection classes of point and line-conditions with respect to $B_3$ are: 
    \begin{align*}
        B_4 \circ P_4 & = l+2m, & B_4 \circ L_4 & = 1+4l+8m-6e-z.
    \end{align*}
    \end{lemma}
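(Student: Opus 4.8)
The plan is to proceed exactly as in the four preceding lemmas, so that the argument is essentially formal once two multiplicities are pinned down. First I would note that a general point condition $P_3$ does not contain $B_3$: over the dense open locus of $B_3 \cong \Bl_{\Delta}\PP^n\times\PP^n$ parametrising honest pairs $([\lambda],[\mu])$, the cubic $\lambda\mu^2$ passes through a general point $p$ only if $\lambda(p)=0$ or $\mu(p)=0$, a proper closed condition. Hence $e_{B_3}P_3 = 0$, the total transform of $P_3$ equals its proper transform, and $\pi_4^\ast(P_3) = P_4$. For the line conditions, every $L^\ell$ contains the base locus $S_0$, so its successive proper transforms contain $S_1, S_2$ and $S_3 = B_3$; moreover $L_3$ is generically smooth along $B_3$ because line conditions are generically smooth along $S_0$ by Lemma~\ref{lem:lemma0.1}(i), this being preserved under the blow-up closure lemma at the centres $B_0,B_1,B_2$, each of which meets the proper transform of $S_0$ in positive codimension. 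Therefore $e_{B_3}L_3 = 1$ and $\pi_4^\ast(L_3) = L_4 + E_4$.

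Next I would restrict these divisor classes to $B_4$ using the fibre-product square $\pi_4\circ j_4 = j_3\circ(\pi_4|_{B_4})$, which yields $j_4^\ast\pi_4^\ast = (\pi_4|_{B_4})^\ast j_3^\ast$. With the convention that $l,m,e$ on $B_4$ denote the pullbacks of the corresponding generators of $\mathrm{CH}^\ast(B_3)$, and using $j_3^\ast[P_3] = l+2m$ and $j_3^\ast[L_3] = 4l+8m-6e$ from the previous lemma, I obtain $j_4^\ast[P_4] = l+2m$ and $(\pi_4|_{B_4})^\ast j_3^\ast[L_3] = 4l+8m-6e$. The one genuinely new input is the identity $j_4^\ast[E_4] = z$: since $B_4 = \PP(\mathcal{E})$ sits inside $E_4 = \PP(N_{B_3}V_3)$ as the projectivisation of the sub-bundle $\mathcal{E}\subseteq N_{B_3}V_3$, the tautological bundle $\mathcal{O}_{B_4}(-1)$ is the restriction of $\mathcal{O}_{E_4}(-1) = \mathcal{O}_{V_4}(E_4)|_{E_4}$, so $z = c_1(\mathcal{O}_{B_4}(-1)) = c_1(\mathcal{O}_{V_4}(E_4)|_{B_4}) = j_4^\ast[E_4]$. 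Writing $[L_4] = \pi_4^\ast[L_3] - [E_4]$ and restricting then gives $j_4^\ast[L_4] = 4l+8m-6e-z$.

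Finally I would invoke the formula of Remark~\ref{rmk:intersClasses}, $B_4\circ X = e_{B_4}X\cdot[B_4] + j_4^\ast[X]$. Because a general point condition $P_4$ does not contain $B_4$ (again, over a general point of $B_3$ the associated cubic misses $p$), we have $e_{B_4}P_4 = 0$, hence $B_4\circ P_4 = j_4^\ast[P_4] = l+2m$. Because the general line condition $L_4$ is generically smooth along $B_4$ --- this comes out of the transversality statements established in the proof of Proposition~\ref{prop:vector_bundle}, which exhibit $\binom{n+2}{3}$ line conditions $L_3^{\ell_i}$ smooth at the general point of $B_3$ and meeting transversally there, so that $L_4 = \Bl_{B_3}L_3$ is smooth at the general point of $B_4$ --- we have $e_{B_4}L_4 = 1$, and therefore $B_4\circ L_4 = [B_4] + j_4^\ast[L_4] = 1 + 4l+8m-6e-z$.

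The only point that needs care, rather than mere bookkeeping, is confirming that $L_3$ (hence $L_4$) has multiplicity exactly one along the last two centres; everything else follows formally from the pullback--restriction identities and the description of $B_4$ as a projective sub-bundle of the exceptional divisor $E_4$.
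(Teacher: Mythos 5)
Your argument is correct and follows exactly the template the paper uses for the analogous lemmas at the levels of $B_0$ through $B_3$ (the paper in fact omits the proof of this particular instance): pull back via Remark~\ref{rmk:intersClasses}, identify $j_4^\ast[E_4]$ with $z=c_1(\mathcal{O}_{B_4}(-1))$ through $\mathcal{O}_{E_4}(-1)=\mathcal{O}_{V_4}(E_4)|_{E_4}$ restricted to the sub-bundle $\PP(\mathcal{E})\subseteq\PP(N_{B_3}V_3)$, and check the multiplicities $e_{B_4}P_4=0$, $e_{B_4}L_4=1$. Your justification of generic smoothness of $L_4$ along $B_4$ via the transversality exhibited in Proposition~\ref{prop:vector_bundle} is the right supporting input and is consistent with the paper's construction.
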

    
	\section{Characteristic numbers for cubic surfaces, and something more}\label{sec:4}
	In this section we gather all information from Section~\ref{sec:3} in order to compute the characteristic numbers with respect to line conditions for smooth cubic surfaces.
	Recall that for $n=3$ the moduli space of cubic surfaces is $V_0 = \PP^{19}$ and that in Section \ref{sec:2} we constructed a $1$-complete space of cubic surfaces denoted $V_5$. Moreover, $V_{i+1}$ is the blow-up of $V_{i}$ with center $B_i$, and $P_i$ and $L_i$ are the proper transforms, respectively, of point and line conditions in $V_i$.
	Thanks to Theorem \ref{thm:counting} and \cite[Theorem II]{Aluffi:1990} we obtain the following.
	
	\begin{lemma}
	The number $\mathcal{N}(n_p,n_{\ell})$ of smooth cubic surfaces containing $n_p$ given points and tangent to $n_{\ell}$ given lines in general position with $n_p+n_{\ell}=19$ is
	
	\[\mathcal{N}(n_p,n_{\ell})=4^{n_{\ell}}-\sum_{i=0}^4\int_{B_i}\frac{(B_i\circ P_i)^{n_p}(B_i\circ L_i)^{n_{\ell}}}{c(N_{B_i}V_i)}\]
	\end{lemma}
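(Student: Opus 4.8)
The plan is to read off $\mathcal{N}(n_p,n_\ell)$ as an intersection number on the $1$-complete variety $V_5$ and then to unwind the five blow-ups one at a time. First I would apply Theorem~\ref{thm:counting}\eqref{item:number_elements}--\eqref{item:multiplicity} with $F$ the dense open locus of smooth cubics in $V_0$; then $\overline{F} = V_0$, its proper transform $\tilde F$ is all of $V_5$, and $\mathcal{N}(n_p,n_\ell) = \int_{V_5} P_5^{n_p} L_5^{n_\ell}$, the hypothesis $n_p + n_\ell = 19 = \dim V_0$ being exactly what makes this a $0$-cycle.

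Next I would descend the tower $V_5 \to V_4 \to \cdots \to V_0$. For each blow-up $\pi_{i+1}\colon V_{i+1} = \Bl_{B_i} V_i \to V_i$ with exceptional divisor $E_{i+1}$, the lemmas of Section~\ref{sec:3} record $\pi_{i+1}^\ast P_i = P_{i+1}$ and $\pi_{i+1}^\ast L_i = L_{i+1} + e_{B_i}(L_i)\, E_{i+1}$, where $e_{B_0}(L) = 2$, $e_{B_i}(L_i) = 1$ for $1\le i\le 4$, and $e_{B_i}(P_i) = 0$ throughout. Hence $P_{i+1}^{n_p} L_{i+1}^{n_\ell}$ is obtained from $P_i^{n_p} L_i^{n_\ell}$ by replacing each divisor factor by its proper transform, which is precisely the situation of Aluffi's intersection formula for a blow-up along a smooth center, \cite[Theorem~II]{Aluffi:1990}. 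It yields, for each $i$,
\[
\int_{V_{i+1}} P_{i+1}^{n_p} L_{i+1}^{n_\ell} = \int_{V_i} P_i^{n_p} L_i^{n_\ell} - \int_{B_i} \frac{(B_i \circ P_i)^{n_p}\,(B_i \circ L_i)^{n_\ell}}{c(N_{B_i} V_i)},
\]
where the full intersection classes $B_i\circ P_i = e_{B_i}(P_i)[B_i] + j_i^\ast[P_i]$ and $B_i\circ L_i = e_{B_i}(L_i)[B_i] + j_i^\ast[L_i]$ are those of Remark~\ref{rmk:intersClasses}, the products are taken in $\mathrm{CH}^\ast(B_i)$ with $[B_i]$ acting as the unit, division by $c(N_{B_i}V_i)$ denotes multiplication by the inverse total Chern class $s(N_{B_i}V_i)$, and $\int_{B_i}$ extracts the degree of the dimension-zero component. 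Should a self-contained derivation be preferred, this identity comes from expanding $(\pi_{i+1}^\ast P_i)^{n_p}(\pi_{i+1}^\ast L_i - e_{B_i}(L_i) E_{i+1})^{n_\ell}$, using $E_{i+1}^k = (j_{i+1})_\ast\big((-\zeta)^{k-1}\big)$ on $E_{i+1} = \PP(N_{B_i}V_i)$, the projection formula, and $(p_{i+1})_\ast(\zeta^{k-1}) = s_{k-\codim(B_i,V_i)}(N_{B_i}V_i)$, and reorganizing the resulting sum into closed form.

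Iterating the displayed identity for $i = 4,3,2,1,0$ — each application being made afresh with ambient space $V_i$, so that the five corrections simply accumulate by telescoping — gives
\[
\int_{V_5} P_5^{n_p} L_5^{n_\ell} = \int_{V_0} P^{n_p} L^{n_\ell} - \sum_{i=0}^{4} \int_{B_i} \frac{(B_i \circ P_i)^{n_p}\,(B_i \circ L_i)^{n_\ell}}{c(N_{B_i} V_i)}.
\]
On $V_0 = \PP^{19}$ the point condition is a hyperplane and, by Remark~\ref{rmk:PGL}, the line condition is a quartic hypersurface, so $[P] = H$ and $[L] = 4H$ with $H$ the hyperplane class; hence $\int_{V_0} P^{n_p} L^{n_\ell} = 4^{n_\ell}$ since $n_p + n_\ell = 19$. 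Substituting yields the asserted formula.

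The main obstacle is the middle step: invoking \cite[Theorem~II]{Aluffi:1990} legitimately at each of the five stages. This requires, for every $i$, the correct behaviour of the point and line conditions along $B_i$ — that $P_i$ does not contain $B_i$, that the multiplicity of $L_i$ along $B_i$ equals the constant term of $B_i\circ L_i$ recorded in Section~\ref{sec:3}, and that the proper transforms are the pullbacks stated there — together with the explicit classes $j_i^\ast[P_i]$, $j_i^\ast[L_i]$, the Chern classes $c(N_{B_i}V_i)$, and the intersection pairings on each $\mathrm{CH}^\ast(B_i)$. All of these are exactly the content of the lemmas and theorems of Section~\ref{sec:3}; what then remains is the mechanical polynomial expansion and Segre-class pairing, which is carried out in the accompanying code.
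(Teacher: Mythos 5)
Your proposal is correct and follows exactly the route the paper intends: the paper proves this lemma simply by citing Theorem~\ref{thm:counting} together with \cite[Theorem~II]{Aluffi:1990}, and your write-up is a faithful unwinding of precisely that combination (reduction to $\int_{V_5}P_5^{n_p}L_5^{n_\ell}$, telescoping Aluffi's blow-up correction over the five centers, and the base case $4^{n_\ell}$ on $\PP^{19}$). No discrepancies with the data recorded in Section~\ref{sec:3}.
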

	
	\begin{thm}\label{thm:charNumbers}
	We have  
	\[
	\mathcal{N}(n_p,n_{\ell})=
	\begin{cases}
    4^{19-n_p}, &n_p\in\{7,8,\dots,19\},\\
    67107584, &n_p=6,\\
	268391296, &n_p=5,\\
	1072926016, &n_p=4,\\
	4266198896, &n_p=3,\\
	16615227040, &n_p=2,\\
	61810371328, &n_p=1,\\
	213642327616, &n_p=0.\\
	\end{cases}
	\]
	\end{thm}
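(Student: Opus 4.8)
The plan is to derive Theorem~\ref{thm:charNumbers} directly from the formula of the preceding lemma,
\[\mathcal{N}(n_p,n_{\ell})=4^{n_{\ell}}-\sum_{i=0}^4\int_{B_i}\frac{(B_i\circ P_i)^{n_p}(B_i\circ L_i)^{n_{\ell}}}{c(N_{B_i}V_i)},\]
by substituting the data assembled in Section~\ref{sec:3} and evaluating the resulting top-degree intersection numbers on each center $B_i$. Here $4^{n_\ell}$ is the naive B\'ezout contribution $\deg(P^{n_p}\cdot L^{n_\ell})$ in $V_0=\PP^{19}$, using that a point condition is a hyperplane and a line condition a quartic.

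First I would dispose of the range $n_p\geq 7$, where the claim is that every correction term vanishes, so $\mathcal{N}(n_p,n_\ell)=4^{n_\ell}=4^{19-n_p}$. For $i\in\{0,1,2\}$ one has $B_i\circ P_i=3h$ with $h$ the pullback of the hyperplane class on $B_0\cong\PP^3$, so $h^4=0$ and hence $(B_i\circ P_i)^{n_p}=3^{n_p}h^{n_p}=0$ already for $n_p\geq 4$. For $i\in\{3,4\}$ one has $B_i\circ P_i=l+2m$, and in $\mathrm{CH}^\ast(B_3)$ and $\mathrm{CH}^\ast(B_4)$ we have $l^4=m^4=0$; expanding $(l+2m)^{n_p}=\sum_k\binom{n_p}{k}2^k\,l^{n_p-k}m^k$, every monomial with $n_p\geq 7$ has $l$-degree $\geq 4$ or $m$-degree $\geq 4$, hence vanishes. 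So all five integrands are zero in this range.

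Next, for $0\leq n_p\leq 6$ I would carry out the explicit computation. For each $i$ I would first invert the normal bundle Chern class to obtain the Segre class $s(N_{B_i}V_i)$, truncated at $\dim B_i\in\{3,8,17,6,8\}$ for $i=0,\dots,4$; the classes $c(N_{B_i}V_i)$ are recorded explicitly in Section~\ref{sec:3} (e.g.\ $c(N_{B_0}V_0)=(1+3h)^{20}/(1+h)^4$, with the $n=3$ expansions tabulated for $i=1,2,3,4$). Together with the full intersection classes $B_0\circ L=2+12h$, $B_1\circ L_1=1+12h-2\epsilon$, $B_2\circ L_2=1+12h-2\epsilon-\phi$, $B_3\circ L_3=1+4l+8m-6e$, $B_4\circ L_4=1+4l+8m-6e-z$, and the point conditions above, the integrand $(B_i\circ P_i)^{n_p}(B_i\circ L_i)^{n_\ell}\,s(N_{B_i}V_i)$ is a polynomial in the generators of $\mathrm{CH}^\ast(B_i)$; I would extract its component in degree $\dim B_i$ and evaluate it against the intersection-number tables of Section~\ref{sec:3} (Tables~\ref{table:intsurfaces-1}--\ref{table:intsurfaces-4}). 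Summing the five resulting numbers and subtracting from $4^{19-n_p}$ yields $\mathcal{N}(n_p,n_\ell)$, and running $n_p=0,1,\dots,6$ produces the seven listed values.

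The main obstacle is purely computational rather than conceptual: the intersection ring of $B_4=\PP(\mathcal{E})$ and the class $c(N_{B_4}V_4)$ rely on the identification $\mathcal{E}|_e\cong\Sym^2(T_{e/\Delta})$ of Proposition~\ref{prop:identify_E} and on the partial reconstruction of $c(\mathcal{E})$ described in Theorem~\ref{thm:B4}\eqref{item:Chern_classes_E}, while assembling the Segre classes and top intersection products on the large centers $B_1$ ($\dim 8$) and $B_2$ ($\dim 17$) is unwieldy by hand. I would therefore perform these substitutions and the final polynomial evaluations with the computer algebra code accompanying the paper; no geometric input beyond Section~\ref{sec:3} is needed, and the theorem follows once all five contributions have been tabulated, summed, and subtracted from the B\'ezout bound.
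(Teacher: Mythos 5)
Your proposal is correct and follows essentially the same route as the paper: the paper's proof is likewise ``merely computational,'' substituting the full intersection classes and normal-bundle Chern classes from Section~\ref{sec:3} into the formula of the preceding lemma and evaluating the five correction integrals (which it tabulates for each $n_p$). Your explicit degree argument for the vanishing of all correction terms when $n_p\geq 7$ (via $h^4=0$ on $B_0,B_1,B_2$ and $l^4=m^4=0$ on $B_3,B_4$) is a nice touch that the paper leaves implicit, but it does not change the method.
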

	
	\begin{proof}
	The proof is merely computational.
	\[
	\int_{B_0}\frac{(3h)^{n_p}(2+12h)^{n_{\ell}}(1+h)^{4}}{(1+3h)^{20}}=
	\begin{cases}
	1769472, &n_p=3\\
	54263808, &n_p=2\\
	877658112, &n_p=1\\
	9948889088, &n_p=0\\
	\end{cases}
	\]
	\[
	\int_{B_1}\frac{(3h)^{n_p}(1+12h-2\epsilon)^{n_{\ell}}(1+2h-\epsilon)^{10}}{(1+\epsilon)(1+3h-\epsilon)^{20}}=
	\begin{cases}
	434889, &n_p=3\\
	13011156, &n_p=2\\
	203305944, &n_p=1\\
	2199770536, &n_p=0\\
	\end{cases}
	\]
	\[
	\int_{B_2}\frac{(3h)^{n_p}(1+12h-2\epsilon-\phi)^{n_{\ell}}}{(1+\phi)(1+\epsilon-\phi)}=
	\begin{cases}
	17951031, &n_p=3\\
	443328300, &n_p=2\\
	5677810728, &n_p=1\\
	49885157976, &n_p=0\\
	\end{cases}
	\]
	\[
	\int_{B_3}\frac{(l+2m)^{n_p}(1+4l+8m-6e)^{n_{\ell}}}{c(N_{B_3}V_3)}=
	\begin{cases}
	160, &n_p=6\\
	6240, &n_p=5\\
	130224, &n_p=4\\
	1426504, &n_p=3\\
	8284040, &n_p=2\\
	7701512, &n_p=1\\
	-337368096, &n_p=0\\
	\end{cases}
	\]
	\[
	\int_{B_4}\frac{(B_4\circ P_4)^{n_p}(B_4\circ L_4)^{n_{\ell}}}{c(N_{B_4}V_4)} = 
	\begin{cases}
	1120, &n_p=6\\
	37920, &n_p=5\\
	685584, &n_p=4\\
	7186504, &n_p=3\\
    45754840, &n_p=2\\
	142629112, &n_p=1\\
	-460870176, &n_p=0\\
	\end{cases}
	\]
	\end{proof}
	
	\begin{rmk}
	As explained in Remark~\ref{rmk:goeswrong}, we cannot have a similar result for $n>3$. Looking at the computations in 
	\texttt{\href{https://mathrepo.mis.mpg.de/CountingCubicHypersurfaces}{MathRepo}}, it seems that for $n_p\in [n,2n]$ the last correction term is not affected by the ambiguity explained in Remark~\ref{rmk:goeswrong} of the Chern class $c(\mathcal{E})$. We therefore conjecture that, for every $n$, the numbers given by the code for $n_p\geq n$ are the characteristic numbers.
	\end{rmk}
	
    It is an interesting question whether the characteristic numbers above can be attained by numerical algebraic geometry methods. However, the numbers in Theorem \ref{thm:charNumbers} increase fast and it could be numerically challenging to compute them. One could try instead to compute the correction term that need to be subtracted from $4^{19-n_p}$ by numerical software, e.g. \texttt{HomotopyContinuation.jl} \cite{breiding2018homotopycontinuation}.
    We did not pursue this direction, it would be in any case an interesting problem for experts in numerical algebraic geometry. 

	\setcounter{subsection}{-1}
	\subsection{Crumbs of hyperplanes tangency conditions}
	In what follows we stick to the conventions in Section \ref{sec:1}. In the case of hyperplane tangency conditions for hypersurfaces of degree $d$, the base locus is hard to parametrize compared to \eqref{eq:phi0}.
    In fact, hyperplane conditions in $\PP(\Sym^d(W))$ intersect in the locus $B_0^{H}$ of hypersurfaces with positive-dimensional singular locus 
    \[B_0^{H}(d,n) = \{ [h] \in \PP(\Sym^d(W)), \,\, |  \dim \mathcal{V}(h)_{sing}\geq 1 \}.\]
    
    This space has been studied in \cite{slavov2015moduli,tseng2020collections}. In the case of cubic surfaces, \cite{sukarto2020orbit} gives a classification of the cubic surfaces with positive-dimensional singular locus: it consists of the reducible cubics (which forms a variety of dimension $12$) and of the orbit closure under the $\PGL_4$-action of an irreducible cubic surface corresponding to \cite[Table~1,~6C]{sukarto2020orbit}, giving a variety of dimension $13$. In particular, Slavov's theorem \cite[Theorem~1.1]{slavov2015moduli} is true also for cubic surfaces.  
    
    There are some characteristic numbers that we can derive without any complicated construction. For a hyperplane $H \subseteq \PP(W)$, we define $L^H$ to be the variety in $\PP(\Sym^d(W))$ parametrizing all degree $d$ hypersurfaces tangent to~$H$. 
    
    \begin{rmk}
    	The variety $L_H$ of all degree $d$ hypersurfaces in $\PP(W)$ tangent to $H$ has degree $n(d-1)^{n-1}$. 
    	Indeed, assume $H=\mathcal{V}(x_0)$ and $g \in \PP(\Sym^d(W))$. Asking for the hypersurface $g$ to be tangent to $H$ corresponds to the vanishing of the resultant of the polynomials $\partial_{x_i} g(0,x_1,\dots , x_n)$, for $i\in\{1,..,n\}$. This is the resultant of $n$ homogeneous polymonials of degree $d-1$ in $n$ variables, hence it has degree $n(d-1)^{n-1}$.
    \end{rmk}

    Knowing the degree of the variety $L^H$, it is immediate to compute some of the characteristic numbers for degree $d$ hypersurfaces in $\PP(W)$.
    
    \begin{lemma}
    Let $d = 5$, $d \geq 7$ or $(d,n)=(3,3)$. If $n_H<n(d-2)+3$, the number $\mathcal{N}^H(n_p,n_H)$ of degree $d$ (smooth) hypersurfaces in $\PP(W)$ tangent to $n_H$ general hyperplanes and going through $\binom{n+d}{n}-1-n_H$ general points equals $\mathcal{N}^H(n_p,n_H)=(n(d-1)^{n-1})^{n_H}$.
    \end{lemma}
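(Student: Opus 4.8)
The plan is to prove that, under the hypothesis, Bézout's theorem in $V_0\coloneqq\PP(\Sym^d(W))$ applies directly, with no correction term. Write $N\coloneqq\dim V_0=\binom{n+d}{n}-1$, so that $n_p=N-n_H$. By the preceding remark each hyperplane condition $L^H$ is a hypersurface of $V_0$ of degree $n(d-1)^{n-1}$, while each point condition $P^p$ is a hyperplane of $V_0$. So it is enough to show that for general hyperplanes $H_1,\dots,H_{n_H}$ and general points $p_1,\dots,p_{n_p}$ the intersection
\[
\Xi\coloneqq L^{H_1}\cap\cdots\cap L^{H_{n_H}}\cap P^{p_1}\cap\cdots\cap P^{p_{n_p}}
\]
is a transverse — hence finite and reduced — intersection all of whose points lie in the open locus $U\subseteq V_0$ of smooth hypersurfaces. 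Granting this, Bézout gives $\mathcal{N}^H(n_p,n_H)=\#\,\Xi=(n(d-1)^{n-1})^{n_H}$.

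I would first record two reductions. On the one hand $\Xi$ is never empty, since the product of the classes of its $N$ defining divisors is $(n(d-1)^{n-1})^{n_H}\neq0$ in $\mathrm{CH}_0(V_0)=\Z$, which would vanish if the intersection were empty. On the other hand, the hypothesis $n_H<n(d-2)+3$ is exactly what keeps the common base locus $B_0^H$ of all hyperplane conditions — the locus of degree $d$ hypersurfaces with singular locus of dimension $\geq 1$ — away from $\Xi$: for the degrees $d$ in the statement this locus has the expected codimension $n(d-2)+3$ in $V_0$ by \cite{slavov2015moduli}, and likewise in the case $(d,n)=(3,3)$ by the classification from \cite{sukarto2020orbit} recalled above; since every $L^{H_i}$ contains $B_0^H$ while the $P^{p_j}$ are general hyperplanes, one gets
\[
\dim\bigl(B_0^H\cap P^{p_1}\cap\cdots\cap P^{p_{n_p}}\bigr)\le\dim B_0^H-n_p=n_H-\bigl(n(d-2)+3\bigr)<0,
\]
so for general points no point of $\Xi$ lies on $B_0^H$.

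To conclude I would run the usual incidence-correspondence argument. Let $\mathcal I\subseteq V_0\times(\check{\PP}^n)^{n_H}\times(\PP^n)^{n_p}$ be the set of tuples $([g],H_1,\dots,H_{n_H},p_1,\dots,p_{n_p})$ with $[g]\in L^{H_i}$ for all $i$ and $g(p_j)=0$ for all $j$, with its projection to $\mathcal B\coloneqq(\check{\PP}^n)^{n_H}\times(\PP^n)^{n_p}$, and stratify $\mathcal I$ according to whether $[g]$ is smooth, $[g]$ is singular but not in $B_0^H$, or $[g]\in B_0^H$. For any $[g]\notin B_0^H$ the set of hyperplanes $H$ with $[g]\in L^H$ — those through one of the finitely many singular points of $\mathcal V(g)$, together with the dual variety of $\mathcal V(g)$ — is exactly $(n-1)$-dimensional, and the $p_j$ with $g(p_j)=0$ range over $\mathcal V(g)$, of dimension $n-1$; counting dimensions, the smooth stratum $\mathcal I_{\mathrm{sm}}$ has dimension $N+(n-1)N=\dim\mathcal B$ while the singular-but-not-in-$B_0^H$ stratum has dimension at most $(N-1)+(n-1)N<\dim\mathcal B$. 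Together with the previous paragraph (which shows the $B_0^H$-stratum does not dominate $\mathcal B$), it follows that for general $(H_1,\dots,H_{n_H},p_1,\dots,p_{n_p})$ the fibre of $\mathcal I\to\mathcal B$ is non-empty, entirely contained in $\mathcal I_{\mathrm{sm}}$, and finite — as $\mathcal I_{\mathrm{sm}}$ is irreducible of dimension $\dim\mathcal B$, a non-empty general fibre of $\mathcal I_{\mathrm{sm}}\to\mathcal B$ is $0$-dimensional. Thus $\Xi$ is finite and all its points are smooth hypersurfaces; transversality (hence that $\#\,\Xi$ equals the Bézout number) I would then verify as in Lemma~\ref{lem:lemma0.1}(iii), using that the tangent space to $L^{H_i}$ at a point $[g]$ of $\Xi$ is the point condition $P^{x_i}$ for $x_i$ the (nodal) singular point of $\mathcal V(g)\cap H_i$, so that transversality amounts to the $N$ points $x_1,\dots,x_{n_H},p_1,\dots,p_{n_p}$ imposing independent conditions on degree $d$ forms — automatic for general configurations.

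The step I expect to be the main obstacle is controlling $B_0^H$: the whole argument hinges on its codimension being the expected $n(d-2)+3$, which is exactly why the hypothesis restricts $d$ (to the degrees for which \cite{slavov2015moduli}, or the explicit description for cubic surfaces, gives the expected dimension) and why the bound $n_H<n(d-2)+3$ appears — it is precisely the range in which $B_0^H$ is swept away by the $n_p$ point conditions, so that the five blow-ups of Section~\ref{sec:2} needed for line conditions can be dispensed with here. A subsidiary technical point is the dimension estimate for the singular stratum, which rests on the fact that the set of hyperplanes $H$ with $[g]\in L^H$ stays $(n-1)$-dimensional as long as $\mathcal V(g)$ has $0$-dimensional singular locus.
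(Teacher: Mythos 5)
Your argument is correct and takes essentially the same route as the paper: the whole lemma reduces to Bézout's theorem once one knows $\codim B_0^{H}(d,n)=n(d-2)+3$ (from Tseng's theorem for $d=5$, $d\geq 7$, and from the explicit classification for cubic surfaces), so that the hypothesis $n_H<n(d-2)+3$ lets the $n_p$ general point conditions sweep the common base locus out of the intersection. The paper records only this one-line reduction; your incidence-correspondence dimension count and the tangent-space/transversality check are precisely the genericity verifications it leaves implicit.
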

    
    \begin{proof} If we consider $n_H$ hyperplane conditions with $n_H$ strictly less than the codimension of $B_0^{H}(d,n)$, the claim follows from Bézout's theorem. The codimension of $B_0^{H}(d,n)$ is known for $d=5$ or $d\geq 7$ \cite[Theorem 1.6]{tseng2020collections} and arbitrary $n$ with
    \begin{equation*}\label{eq:codim}
    	\hbox{codim} B_0^{H}(d,n) = n(d-2)+3.
    \end{equation*}
    However, this codimension holds true also in the case of cubic surfaces thanks to \cite[Table~1 and Table~2]{sukarto2020orbit}.
    \end{proof}

    \printbibliography	
\end{document}